\renewcommand{\vec}[3]{{#1}^{(#2)}_{#3}}
\newcommand{\func}[2]{#1\bigl(#2\bigr)}
\newcommand{\codefun}[1]{\text{#1}}
\newcommand{\strongcvx}[1]{strongly $\norm{\cdot}_#1$-convex}
\newcommand{\strongcvxity}[1]{strongly $\norm{\cdot}_#1$-convexity}
\begin{document}

\title{Coordinate-wise descent methods for leading eigenvalue problem}

\author{Yingzhou Li$^{\,\sharp}$,
        Jianfeng Lu$^{\,\sharp\,\dagger}$,
        Zhe Wang$^{\,\sharp}$
  \vspace{0.1in}\\
  $\sharp$ Department of Mathematics, Duke University\\
  $\dagger$ Department of Chemistry and Department of Physics,
  Duke University\\
}

\maketitle

\begin{abstract} 
  Leading eigenvalue problems for large scale matrices arise in many
  applications. Coordinate-wise descent methods are considered in
  this work for such problems based on a reformulation of the leading
  eigenvalue problem as a non-convex optimization
  problem. The convergence of several coordinate-wise methods is analyzed
  and compared. Numerical examples of applications to quantum many-body
  problems demonstrate the efficiency and provide benchmarks of the
  proposed coordinate-wise descent methods.
\end{abstract}

{\bf Keywords.} Stochastic iterative method; coordinate-wise descent
method; leading eigenvalue problem.

\tableofcontents

\reversemarginpar
\section{Introduction}
\label{sec:Intro}

This paper focuses on solving the leading eigenvalue problem (LEVP)
via coordinate-wise descent methods. Given a symmetric matrix $A$,
the LEVP is defined as
\begin{equation} \label{eq:LEVP}
    A x = \lambda x,
\end{equation}
for $A \in \bbR^{n \times n}$, $A^\top = A$, $\lambda$ is the largest
eigenvalue of $A$, and $x \in \bbR^n$ is the corresponding eigenvector
(we assume here and in the sequel that the leading eigenvalue is
positive and non-degenerate). In the case that several leading
eigenpairs are needed, we can combine the methods for LEVP together
with deflation technique to obtain all desired eigenpairs.
If the eigenvalue of largest
magnitude is needed, we can apply the proposed methods to both $A$ and
$-A$ and then select the desired one.

Leading eigenvalue(s) problems appear in a wide range of applications,
including principal component analysis (PCA), spectral clustering,
dimension reduction, electronic structure calculation, quantum many-body
problems, etc. As a result, many methods have been developed to
address the leading eigenvalue(s) problems, e.g., power method, Lanczos
algorithm~\cite{Golub2013, Lanczos1950}, randomized SVD~\cite{Halko2011},
(Jacobi-)Davidson algorithm~\cite{Davidson1975,
  Sleijpen1996}, local optimal block preconditioned conjugate gradient
(LOBPCG)~\cite{Knyazev2001a}, projected preconditioned conjugate
gradient (PPCG)~\cite{Vecharynski2015}, orbital minimization
method (OMM)~\cite{Corsetti2014, Lu2017a}, Gauss-Newton
algorithm~\cite{Liu2015c}, etc.  However, most of those traditional
iterative methods apply the matrix $A$ every iteration and require
many iterations before convergence, where the iteration number usually
depends on the condition number of $A$ or the leading eigengap of $A$.
Some other methods conduct the inverse of the matrix every iterations,
which is too expensive for large $A$.  In this paper, we focus on
designing algorithms for solving LEVP of extremely large size and
expensive matrix evaluation.  Full configuration interaction (FCI) from
quantum many-body problems is one of the examples in such a setting. In
FCI calculation \cite[pp.~350--353]{SzaboOstlund1996}, the size of
the Hamiltonian matrix grows factorially as the number of electrons
and orbitals in the system increases. For such a large dimensional
problem, it is infeasible to store the matrix in the main memory and
matrix entry should be computed on-the-fly. Thus the evaluation of
the matrix entry is expensive.  Another property of FCI is that the
magnitudes of coordinates of the leading eigenvector span widely. Many
coordinates are relatively negligible. Hence traditional methods, which
treat all coordinates evenly, are unfavorable for FCI problems. Novel
algorithms adapted to the properties of FCI are desired. Although the
designed algorithms in this paper can be applied to other problems,
FCI will be regarded as the target application.

The LEVP naturally can be rewritten as the following unconstrained
optimization problem,
\begin{equation} \label{eq:LEVP-opt}
    \min_{x \in \bbR^n} \norm{A - x x^\top}_F^2,
\end{equation}
where $\norm{\cdot}_F$ denotes the Frobenius norm. Throughout this paper,
we denote $f(x) = \norm{A - x x^\top}_F^2$ as the objective function.
Based on the eigendecomposition of $A$, it is easy to verify that $x
= \pm \sqrt{\lambda_1} v_1$ are minimizers of \eqref{eq:LEVP-opt},
where $\lambda_1$ is the largest eigenvalue of $A$ and $v_1$ is
the corresponding eigenvector with $\norm{v_1} = 1$. Therefore,
if the optimization problem \eqref{eq:LEVP-opt} can be solved,
the leading eigenpair $(\lambda_1, v_1)$ can be reconstructed from
the minimizer $x^*$, i.e., $\lambda_1 = \norm{x^*}^2$ and $v_1 =
\frac{x^*}{\norm{x^*}}$.  Such an unconstrained optimization problem
\eqref{eq:LEVP-opt} has appeared before for solving eigenvalue
problems~\cite{Lei2016, Liu2015c}. In this paper, we solve the LEVP
problem \eqref{eq:LEVP-opt} with algorithms adapted to FCI problems.

Recently, as the rise of big data and deep learning, stochastic iterative
methods are revived and revisited to reduce the computational cost per
iteration, allow aggressive choice of stepsize, and fully adopt the
modern computer architecture. Stochastic gradient descent methods and
coordinate-wise descent methods (CDMs) are two popular groups of methods
among them. Considering CDMs, when single coordinate is updated every
iteration, they only access one column of $A$ each iteration and hence
cost $\bigO(n)$ operations per-iteration for $n$ being the size of the
problem. Meanwhile, the stepsize could be often $n$ times bigger than
that in the traditional gradient descent method for convex objective
function~\cite{Nesterov2012, Richtarik2014, Wright2015}.  Massive
parallelization capability is also observed~\cite{Liu2015a, Liu2015b,
Parnell2018, Tappenden2018, You2016}, which is achieved mostly via its
``asynchronized'' property of different coordinates.~\footnote{Although
different coordinates can be updated in an asynchronized way, these
stochastic iterative methods still requires synchronization every now
and then.} In this work, we develop and analyze methods within the
scope of coordinate-wise descent methods addressing the LEVP.

\subsection{Coordinate-wise descent method}
\label{sec:introCDM}

\begin{comment}

{\bf Cyclic}

Cyclic Convex
\cite{Beck2013, Desopo1959, Grippo2000, Hong2017, Luo1992, Saha2013,
Tseng2009, Zadeh1970, Tao2012}

Cyclic Non-convex
\cite{Bonettini2011, Xu2017a, Tseng2001} converges to stationary point
\cite{Razaviyayn2013} in a convex set converges to stationary point

{\bf Uni}

\cite{Fercoq2015} sum of convex functions

\cite{Nesterov2012, Shalev-Shwartz2011, Shalev-Shwartz2013} convex smooth

\cite{Richtarik2014} convex nonsmooth function

{\bf Important Sampling}

\cite{Leventhal2010} samples with $A_{ii}/tr(A)$ for convex function

\cite{Nesterov2012, Richtarik2014} samples with inverse of Lipschitz
constant for convex smooth or nonsmooth function

\cite{Zhang2017b} samples with combination of uniform probability and
matrix column norm for convex function (convex-concave dual alternating).

\cite{Csiba2015} samples with power of the probability in \cite{Zhang2017b}
for convex function (convex-concave dual alternating).

\cite{Allen-Zhu2016} samples with power of the inverse of Lipschitz
constant for convex function

{\bf Arbitrary prob}

\cite{Patrascu2015} nonconvex function converging to stationary point

\cite{Qu2016, Qu2016a} for convex function

\cite{Richtarik2014, Richtarik2016} for convex function

\cite{Chen2012, Li2015c} proved the convergent to stationary point for
GCD-LS-LS.

\end{comment}

The history of coordinate-wise methods (CDMs) dates back to the early
development of the discipline of optimization~\cite{Desopo1959}.  It did
not catch much attention until very recently.  Readers are referred to
the recent surveys \cite{Shi2016, Wright2015} and references therein for
detailed history of the development and analysis of general CDMs. Since
the survey papers, the area of CDMs is still under fast development, see
e.g., \cite{Johnson2017, Perekrestenko2017, Wright2017, Xu2018}. Momentum
acceleration can also be combined with the CDMs to further accelerate the
convergence~\cite{Allen-Zhu2016,Lee2013,Lin2015,Nesterov2012}. Besides
these new development of methodology, new applications adopt CMDs to
accelerate computations, including but not limit to the area of imaging
processing~\cite{Han2018}, signal processing~\cite{AlaeeKerahroodi2017,
Mitchell2016, Zeng2017}, wireless communication~\cite{Wu2016, Wu2016a},
data science~\cite{Shi2017, Tu2016, Vandaele2016}, etc. It is worth
mentioning that \citet{Peng2016} discussed coordinate friendly structure,
which could be beneficial for more applications.

In terms of designing CDMs for LEVP, \citet{Lei2016} proposes
coordinate-wise power method (CPM) addressing the LEVP. CPM accelerates
the traditional power method by the coordinate-wise updating
technique. In the same paper, a symmetric greedy coordinate descent
(SGCD) method is proposed, which will be reviewed in detail in the
following. \citet{Wang2017a} adopts shift-and-invert power method to
solve the LEVP, where the inverse of the shifted linear system is
addressed by coordinate-wise descent method. Different coordinate
updating rules are employed, i.e., Gauss-Southwell-Lipschitz rule
(SI-GSL), cyclic rule (SI-Cyclic), and accelerated randomized coordinate
descent methods (SI-ACDM).  All these methods are proposed as methods
calculating the leading eigenvector and outperforms many other method
when an accurate upper bound of the leading eigenvalue is given. It is
not clear how to obtain such upper bound efficiently in practice however.

\begin{algorithm}[htp]
    \caption{General coordinate-wise descent method for LEVP}
    \label{alg:CDM}

    Input: Symmetric matrix $A \in \bbR^{n \times n}$; initial vector
    $\vec{x}{0}{}$.

\begin{algorithmic}[1]

    \STATE $\vec{z}{0}{} = A\vec{x}{0}{}$

    \STATE $\ell = 0$
    \WHILE {(not converged)}
        \STATE $j_\ell = \codefun{coordinate-pick}( \vec{x}{\ell}{},
        \vec{z}{\ell}{} )$
        \STATE $\vec{x}{\ell+1}{j} = \left\{
            \begin{array}{ll}
                \codefun{coordinate-update}( \vec{x}{\ell}{},
                \vec{z}{\ell}{}, j_\ell ), & j=j_\ell \\
                \vec{x}{\ell}{j},
                & j \neq j_\ell
            \end{array}
            \right.$
        \STATE $\vec{z}{\ell+1}{} = \vec{z}{\ell}{} + A_{:,j_\ell}\left(
        \vec{x}{\ell+1}{j_\ell} - \vec{x}{\ell}{j_\ell} \right)$

        \STATE $\ell = \ell+1$
    \ENDWHILE
\end{algorithmic}

\end{algorithm}

A general coordinate-wise descent method (CDM) for the
LEVP can be summarised as Algorithm~\ref{alg:CDM}. Given a
symmetric matrix $A$ and the initial vector $\vec{x}{0}{}$. CDM
first picks a coordinate $j_\ell$ according to a specific rule
``$\codefun{coordinate-pick}(\vec{x}{\ell}{}, \vec{z}{\ell}{})$''.
Then it updates the $j_\ell$-th coordinate of $\vec{x}{\ell}{}$ with
``$\codefun{coordinate-update}(\vec{x}{\ell}{}, \vec{z}{\ell}{},
j_\ell)$'' and reaches the vector $\vec{x}{\ell+1}{}$ for the next
iteration. One key to obtain a good choice of $j_\ell$ and the
update is to get $\vec{z}{\ell}{} = A \vec{x}{\ell}{}$ involved in
the calculation. Since $\vec{x}{\ell+1}{}$ and $\vec{x}{\ell}{}$ only
differ by a single coordinate, we have an efficient updating expression
for $\vec{z}{\ell}{}$,
\begin{equation*}
    \vec{z}{\ell+1}{} = A\vec{x}{\ell+1}{} = A \left( \vec{x}{\ell}{}
    + \left( \vec{x}{\ell+1}{j_\ell} - \vec{x}{\ell}{j_\ell} \right)
    e_{j_\ell}
    \right) = \vec{z}{\ell}{} + A_{:,j_\ell} \left( \vec{x}{\ell+1}{j_\ell}
    - \vec{x}{\ell}{j_\ell} \right).
\end{equation*}
Therefore, updating $\vec{z}{\ell+1}{}$ from $\vec{z}{\ell}{}$ costs
$\bigO(n)$ or less operations. Generally, most coordinate-wise descent
methods cost $\bigO(n)$ or less operations per iteration, which is much
smaller than traditional iterative method with $\bigO(n^2)$ operations
per iteration. Such a gap of the computational cost per iteration enables
CDM focusing on the update of more important coordinates throughout
iterations. The increase of the number of iterations is also leveraged by
the choice of stepsize in the updating. The upper bound for the stepsize
with guaranteed convergence in the CDM could be much larger than that
in the traditional gradient descent method. Therefore, although CDM
requires a little more number of iterations to achieve the convergence
criteria, the operations counts and the runtime is much smaller than
that of many traditional iterative methods~\cite{Chow2017, Lei2016}.

\begin{table}[htp]
    \centering
    \begin{tabular}{ccm{10cm}}
        \toprule
        & Short name & Explanation\\
        \toprule
        \multirow{3}{*}{Type of CDM} & CD & plain CDM \\
        & GCD & greedy CDM, the coordinate is pick via a greedy way \\
        & SCD & stochastic CDM, the coordinate is sampled from a
        probability distribution \\
        \toprule
        \multirow{4}{*}{Coordinate-pick} & Cyc & the coordinate is
        chosen in a cyclic way \\
        & Uni & the coordinate is sampled uniformly \\
        & Grad & the coordinate is chosen according to the magnitude of
        the gradient\\
        & LS & the coordinate is chosen according to the exact
        line-search \\
        \toprule
        \multirow{3}{*}{Coordinate-update} & Grad & the coordinate is
        updated according to the gradient multiplying a stepsize\\
        & LS & the coordinate is updated according to the exact
        line-search \\
        & vecLS & the exact line-search is applied to a sparse vector
        direction \\
        \bottomrule
    \end{tabular}
    \caption{Short names in name convention.}
    \label{tab:shortname}
\end{table}

Throughout this paper, several proposed CDMs together with
existing methods will be mentioned many times. In order to reduce
the difficulty in remembering all these names, we will follow
a systematic name convention. The name of a CDM is composed of
three parts, the type of coordinate-wise descent, coordinate-pick
strategy, and coordinate-update strategy. Three parts are separated
with hyphens. Table~\ref{tab:shortname} defines the short names used in
each part. Some of the short name works with specific choice of the type
of CDM.  For example, Uni can only be combined with SCD.  We remark that
some of the existing methods in literature are renamed under this system,
e.g., the coordinate descent method considered in~\cite{Desopo1959,
Hong2017} and \cite{Lee2017a} are called CD-Cyc-LS and CD-Cyc-Grad
respectively; SGCD~\cite{Lei2016} is renamed as GCD-Grad-LS; etc.

\subsection{Contribution}
We first highlight our contribution as following bullet points and then
discuss in detail.
\begin{itemize}
    \item Analyze the landscape of $f(x)$ as
        \eqref{eq:LEVP-opt} in detail;
    \item Derive GCD-LS-LS as the most greedy CDM of $f(x)$ and show
        that most of the saddle points can be avoided under the method;
    \item Propose SCD-Grad-vecLS($t$) and SCD-Grad-LS($t$) as a family
        of stochastic CDMs of $f(x)$ for $t$ being the sampling power,
        and the local convergence is proved with a convergence rate
        monotonically increasing as $t$ increases.
\end{itemize}

In more details, we first analyze the landscape of the objective
function $f(x)$. Through our analysis, we show that, although $f(x)$
is non-convex, $x = \pm \sqrt{\lambda_1} v_1$ are the only two local
minima of $f(x)$. Since they are of the same function value, we conclude
that all local minima of $f(x)$ are global minima.

Then, we investigate a gradient based CDM, CD-Cyc-Grad.  It selects
coordinate in a cyclic way and the updating follows the gradient vector
restricted to that coordinate multiplying by a fixed stepsize. Thanks
to the locality of the gradient updating, we show that CD-Cyc-Grad
converges to global minima almost surely for the LEVP optimization
problem \eqref{eq:LEVP-opt}.

As many other problems solved by CDMs, exact line search along each
coordinate direction can be conducted for \eqref{eq:LEVP-opt}. We further
derive that maximum coordinate improvement is achievable in $\bigO(n)$
operations, which leads to a CDM called GCD-LS-LS. Through the analysis
of saddle points and the greedy strategy in GCD-LS-LS, we find that
many saddle points of the non-convex problem in \eqref{eq:LEVP-opt}
are escapable.

Though greedy method, the GCD-LS-LS, is efficient with single coordinate
update per-iteration, they often fail in convergence when multiple
coordinates are updated per-iteration. Such a problem can be resolved
by introducing stochastic coordinate sampling. The SCD-Grad-vecLS($t$)
and the SCD-Grad-LS($t$) sample several coordinates per-iteration with
the probabilities proportional to the $t$-th power of the gradient vector
at current iteration. When the power goes to infinity, the stochastic
CDMs become the greedy ones. Further, we analyze the local convergence
property for the stochastic CDMs for different sampling power $t$.
The theorem can be applied to show the local convergent property of
GCD-Grad-LS and GCD-LS-LS as corollaries.  One important message of
the theorem is that larger sampling power $t$ leads to faster local
convergence rate. Therefore the convergence rate of either GCD-LS-LS
or SCD-Grad-LS($t$) with $t>0$ is provably faster or equal to that of
SCD-Grad-LS($0$) which corresponds to the uniform sampling.  However,
greedy CDMs or stochastic CDMs with larger $t$ are more difficult to
escape from strict saddle points when the objective is non-convex.
Therefore, through our analysis and numerical experiments, we recommend
SCD-Grad-LS($1$) for LEVP.

Although all methods are introduced as a solution to $f(x)$
in \eqref{eq:LEVP-opt}, they, especially SCD-Grad-vecLS($t$) and
SCD-Grad-LS($t$), can be widely extended to other problems. Most of
the associated analysis could be extended as well.

All proposed and reviewed methods are tested on synthetic matrices and
eigenvalue problems from quantum many-body problems.  In all examples,
CDMs of \eqref{eq:LEVP-opt} outperform power method, coordinate-wise
power method, and full gradient descent with exact line search. When the
matrix is shifted by a big positive number, CDMs of \eqref{eq:LEVP-opt}
converges in the similar number of iterations as in the case without
shifting. While power method or coordinate-wise power method, which are
sensitive to the shifting, converge much slower. For the matrix from
quantum many-body problems, where we know a priori that some of the
coordinates of the leading eigenvector is more important than others due
to the physical property, all CDMs including CDMs of \eqref{eq:LEVP-opt}
and coordinate-wise power method significantly outperform full vector
updating methods. This shows great potential of applying the CDMs to
quantum many-body problems.

\subsection{Organization}

The rest of the paper is organized as follows. We introduce
notations and analyze the landscape of \eqref{eq:LEVP-opt} in
Section~\ref{sec:analysis}. Section~\ref{sec:CDM}, \ref{sec:GCDM} and
\ref{sec:SCDM} follow the same structure, which first introduces or
reviews several CDMs, and then conducts the corresponding analysis.
Section~\ref{sec:CDM} focuses on CDMs with conservative stepsize,
whereas Section~\ref{sec:GCDM} and Section~\ref{sec:SCDM} focus on
greedy and stochastic CDMs respectively.  The numerical results are
given in Section~\ref{sec:num-res}, followed by the conclusion and
discussion in Section~\ref{sec:conclusion}.

\section{Landscape Analysis}
\label{sec:analysis}

This section focuses on the analysis of the variational problem
\eqref{eq:LEVP-opt}. More specifically, we analyze the landscape of
$f(x)$, especially properties of stationary points in this section. The
results show some advantages in working with \eqref{eq:LEVP-opt}, and
more importantly, provide insights in designing optimization algorithms.

\smallskip 
\noindent\emph{Notations.}
Before the detailed analysis, we define a few common notations as in
Table~\ref{tab:notation}. These notations will be used without further
explanation for the rest of the paper.

\begin{table}[htp]
    \centering
    \begin{tabular}{ll}
        \toprule
        Notation & Explanation \\
        \toprule
        $n$    & the size of the problem \\
        $i,j$  & coordinate index \\
        $\Omega$  & a set of coordinate index \\
        $k$  & the size of $\Omega$ \\
        $\ell$ & the iteration index\\
        $\alpha,\beta$ & real coefficients\\
        $\lambda, \lambda_1$ & an eigenvalue and the largest eigenvalue\\
        $v, v_1$ & an eigenvector and the eigenvector associated with
        $\lambda_1$ \\
        $e_i$ & indicator vector with one on the $i$-th entry and zero
        everywhere else \\
        $x, y, z$ & vectors \\
        $\vec{x}{\ell}{}$ & the vector of the $\ell$-th iteration \\
        $x_i$ & the $i$-th entry of vector $x$ \\
        $x_\Omega$ & a vector with entries indexed by $\Omega$ of $x$ \\
        $A$ & the matrix under consideration \\
        $I$ & identity matrix, the size may depend on the context \\
        $A_{i,j}$ & the $(i,j)$-th entry of matrix $A$ \\
        $A_{i,:}, A_{:,j}$ & the $i$-th row and the $j$-th column of
        matrix $A$ \\
        $\norm{\cdot}_F,\norm{\cdot}_2$ & Frobenius norm and 2-norm \\
        \bottomrule
    \end{tabular}
    \caption{Common notations.}
    \label{tab:notation}
\end{table}
Some of the notations in Table~\ref{tab:notation} can be used in a
combined way, e.g., $\vec{x}{\ell}{i}$ denotes the $i$-th entry of the
vector $x$ at $\ell$-th iteration. The set of coordinate index,
$\Omega$, can also be applied to the subscript of a matrix, e.g.,
$A_{:,\Omega}$ denotes the columns of $A$ with index in
$\Omega$. Notice that Frobenius norm and 2-norm are different measure
for a matrix, but they are the same measure acting on a
vector. Therefore, we will drop the subscript of the norm for vectors,
i.e., $\norm{x} = \norm{x}_2 = \norm{x}_F$.

\medskip 
\noindent 
\emph{Stationary points and global minimizers.}
When working with non-convex problems, the understanding of the
landscape of the objective function is crucial for iterative methods.
If the objective function is bounded from below and the gradient
of the objective function is assumed to be Lipschitz continuous,
gradient based iterative methods generally are guaranteed to converge
to a stationary point such that $\norm{\grad f(x)} < \epsilon$ within
$\bigO(1/\epsilon^2)$ iterations~\cite{Nesterov2004}.  Without rate,
\citet{Lee2017a} show that gradient based iterative methods converge to
local minima almost surely if all saddle points are strict saddle points.
Adding a random perturbation to the gradient based iterative methods
enables the analysis of the convergence to local minima with various
rates~\cite{Ge2015,Jin2017}. However, convergence to global minima in
most cases is not guaranteed if the landscape of the objective function
is complicated.

In this section, we analyze the landscape of \eqref{eq:LEVP-opt}
and show that every local minimum is a global minimum. For
simplicity of presentation, we assume that $f(x)$ is a second order
differentiable function (which obviously holds when $f$ is given by
\eqref{eq:LEVP-opt}).  Denote the gradient vector and Hessian matrix
of $f(x)$ as $\grad f(x)$ and $\grad^2 f(x)$ respectively. We give the
following definitions.

\begin{definition}[Stationary point] \label{def:stationary-point}
    A point $y$ is a stationary point of $f$ if $\grad f(y) = 0$.
\end{definition}

\begin{definition}[Strict saddle point]
    \label{def:strict-saddle-point}
    A point $y$ is a strict saddle point of $f$ if $y$ is a stationary
    point and $\lambda_{\text{min}} \left( \grad^2 f(y) \right) <
    0$, where $\lambda_{\text{min}} \left( \grad^2 f(y) \right)$
    is the smallest eigenvalue of the Hessian matrix.\footnote{This
    definition of the strict saddle point includes local maximizer
    as well, which does not cause trouble in minimizing $f(x)$ as in
    \eqref{eq:LEVP-opt}.}
\end{definition}

\begin{definition}[Local minimizer]
    \label{def:local-minimizer}
    A point $y$ is a local minimizer of $f$ if there exists an $\epsilon$
    such that $f(y) \leq f(x)$ for any $\norm{x-y} \leq \epsilon$. If
    further $f(y) < f(x)$ when $x \neq y$, then $y$ is called a strict
    local minimizer.
\end{definition}

Following these definitions, we explicitly write down the form of the
stationary points, strict saddle points and local minimizers of the
objective function $f(x)$ in \eqref{eq:LEVP-opt}. The assumptions on
the matrix $A$ are summarized in Assumption~\ref{ass:matA}.

\begin{assumption}
    \label{ass:matA}
    The matrix $A$ is symmetric with eigenvalues and eigenvectors
    $\lambda_1 > \lambda_2 \geq \lambda_3 \geq \cdots \geq \lambda_n$
    and $v_1,v_2,v_3,\dots,v_n$ respectively. In addition, the largest
    eigenvalue is positive, $\lambda_1 > 0$.
\end{assumption}

\begin{lemma}
    \label{lem:stationary-point}
    Under Assumption~\ref{ass:matA}, $x=0$ is a stationary point
    of $f(x)$. Other stationary points of $f(x)$ are of the form
    $\sqrt{\lambda}v$ where $\lambda$ is a positive eigenvalue of $A$ and
    $v \in \nullsp{\lambda I - A}$ with $\norm{v} = 1$. In particular,
    $ \pm \sqrt{\lambda_1} v_1$ are both stationary points.
\end{lemma}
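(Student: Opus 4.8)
The plan is to compute the gradient of $f(x) = \norm{A - xx^\top}_F^2$ explicitly and then solve the equation $\grad f(x) = 0$. First I would expand $f(x) = \norm{A}_F^2 - 2\langle A, xx^\top\rangle + \norm{xx^\top}_F^2 = \norm{A}_F^2 - 2x^\top A x + \norm{x}^4$, using symmetry of $A$. Differentiating term by term gives $\grad f(x) = -4Ax + 4\norm{x}^2 x = 4(\norm{x}^2 I - A)x$. Hence $x$ is a stationary point if and only if $Ax = \norm{x}^2 x$. The case $x = 0$ trivially satisfies this, giving the first claim.

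Next I would analyze the nonzero solutions of $Ax = \norm{x}^2 x$. If $x \neq 0$, write $\mu = \norm{x}^2 > 0$; then $x$ is an eigenvector of $A$ with eigenvalue $\mu$, so $\mu$ must be a positive eigenvalue of $A$. Writing $v = x/\norm{x}$, we have $\norm{v} = 1$, $v \in \nullsp{\mu I - A}$, and $x = \sqrt{\mu}\, v$. Conversely, any vector of the form $\sqrt{\lambda}\, v$ with $\lambda$ a positive eigenvalue of $A$ and $v \in \nullsp{\lambda I - A}$, $\norm{v}=1$, satisfies $\norm{x}^2 = \lambda$ and $Ax = \lambda x = \norm{x}^2 x$, hence is a stationary point. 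This establishes the characterization. Finally, since $\lambda_1 > 0$ by Assumption~\ref{ass:matA} and $v_1$ is a unit eigenvector for $\lambda_1$, taking $\lambda = \lambda_1$, $v = \pm v_1$ shows $\pm\sqrt{\lambda_1}\, v_1$ are stationary points.

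There is no serious obstacle here; the entire argument is the routine gradient computation plus elementary linear algebra. The only point requiring a small amount of care is the differentiation of the quartic term $\norm{x}^4 = (x^\top x)^2$ and keeping track of the constant factors, and noting that positivity of the eigenvalue $\mu = \norm{x}^2$ for nonzero stationary points is automatic (it cannot be zero or negative since $\norm{x}^2 > 0$). I would present the gradient formula as a displayed equation and then state the equivalence $\grad f(x) = 0 \iff Ax = \norm{x}^2 x$ as the key step, followed by the short case analysis on whether $x = 0$ or not.
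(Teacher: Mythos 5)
Your proposal is correct and follows essentially the same route as the paper: compute $\grad f(x) = -4Ax + 4\norm{x}^2 x$, observe that stationarity is equivalent to $Ax = \norm{x}^2 x$, and split into the cases $x=0$ and $x\neq 0$, where the nonzero case forces $\norm{x}^2$ to be a positive eigenvalue and $x = \sqrt{\lambda}\,v$ with $v$ a unit null vector of $\lambda I - A$. The only (harmless) difference is that you also spell out the converse verification and the derivation of the gradient from the expansion of $f$, which the paper leaves implicit.
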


\begin{proof}
    The stationary point of $f(x)$ satisfies
    \begin{equation} \label{eq:gradf}
        \grad f(x) = -4 A x + 4 \left( x^\top x \right) x = 0.
    \end{equation}
    Obviously, $x = 0$ is a stationary point. When $x \neq 0$, we have
    $\left( x^\top x I - A \right) x = 0$. This implies that $x^\top
    x I - A$ is singular, i.e., $x^\top x = \lambda$ for $\lambda$
    being a positive eigenvalue of $A$. When $\lambda = \lambda_1$,
    we have $x \in \nullsp{\lambda_1 I - A} = \spansp{v_1}$ and $x^\top
    x = \lambda_1$, which implies $x = \pm \sqrt{\lambda_1} v_1$. When
    $0 < \lambda < \lambda_1$, we have $x \in \nullsp{\lambda I - A}$
    and $x^\top x = \lambda$. Hence, $x = \sqrt{\lambda} v$ for $v \in
    \nullsp{\lambda I - A}$ and $\norm{v} = 1$.
\end{proof}

\begin{lemma} \label{lem:strict-saddle-point}
    Under Assumption~\ref{ass:matA}, $0$ and $\sqrt{\lambda} v$ with $v
    \in \nullsp{\lambda I - A}$ and $\norm{v} = 1$ are strict saddle
    points of $f(x)$, where $\lambda < \lambda_1$ and $\lambda$ is a
    positive eigenvalue of $A$.
\end{lemma}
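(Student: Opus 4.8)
The plan is to compute the Hessian of $f$ in closed form and then exhibit an explicit direction of negative curvature at each of the claimed stationary points. Differentiating the gradient identity \eqref{eq:gradf} once more yields
\begin{equation*}
    \grad^2 f(x) = -4 A + 4 \left( x^\top x \right) I + 8 x x^\top ,
\end{equation*}
where the term $4(x^\top x)x$ in \eqref{eq:gradf} contributes both $4(x^\top x)I$ and $8 x x^\top$ upon differentiation. At $x = 0$ this reduces to $\grad^2 f(0) = -4A$, whose smallest eigenvalue is $-4\lambda_1$; since $\lambda_1 > 0$ by Assumption~\ref{ass:matA}, Definition~\ref{def:strict-saddle-point} already shows $0$ is a strict saddle point.

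For a stationary point of the form $x = \sqrt{\lambda}\, v$ with $A v = \lambda v$, $\norm{v} = 1$, and $0 < \lambda < \lambda_1$, note that $x^\top x = \lambda$, so the Hessian becomes
\begin{equation*}
    \grad^2 f(x) = -4 A + 4 \lambda I + 8 \lambda v v^\top .
\end{equation*}
The key observation is that $v$ and $v_1$ lie in distinct eigenspaces of the symmetric matrix $A$ (because $\lambda \neq \lambda_1$), hence $v^\top v_1 = 0$ and $(v v^\top) v_1 = 0$. Consequently $v_1$ is itself an eigenvector of $\grad^2 f(x)$, with
\begin{equation*}
    v_1^\top \grad^2 f(x)\, v_1 = -4\lambda_1 + 4\lambda = 4(\lambda - \lambda_1) < 0 ,
\end{equation*}
so $\lambda_{\text{min}}\bigl(\grad^2 f(x)\bigr) \le 4(\lambda - \lambda_1) < 0$ and $\sqrt{\lambda}\, v$ is a strict saddle point.

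There is essentially no hard step here: once the Hessian is written down, both cases are one-line computations. The only point deserving a moment of care is the orthogonality $v \perp v_1$, which relies on $\lambda$ being \emph{strictly} below $\lambda_1$; the non-degeneracy of $\lambda_1$ assumed in Assumption~\ref{ass:matA} is what guarantees $v_1$ is well defined and is the relevant direction of negative curvature. If one prefers not to use the eigenvector-of-the-Hessian shortcut, the same conclusion follows from the variational characterization $\lambda_{\text{min}}(\grad^2 f(x)) = \min_{\norm{u}=1} u^\top \grad^2 f(x)\, u \le v_1^\top \grad^2 f(x)\, v_1$.
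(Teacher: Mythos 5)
Your proof is correct and follows essentially the same route as the paper: write down the Hessian $\grad^2 f(x) = -4A + 4(x^\top x)I + 8xx^\top$, note the case $x=0$ is immediate from $\lambda_1>0$, and for $x=\sqrt{\lambda}\,v$ use the orthogonality $v \perp v_1$ (from $\lambda \neq \lambda_1$) to exhibit $v_1$ as an eigenvector of the Hessian with eigenvalue $4(\lambda-\lambda_1)<0$. Your explicit justification of the orthogonality and the remark on the variational characterization are minor elaborations of the same argument.
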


\begin{proof}
    According to Lemma~\ref{lem:stationary-point}, $0$ and
    $\sqrt{\lambda} v$ are stationary points of $f(x)$.
    It suffices to validate the second condition in
    Definition~\ref{def:strict-saddle-point}, i.e.,
    \begin{equation} \label{eq:min-lambda}
        \lambda_{min} \left( \grad^2 f(x) \right) = \lambda_{min} \left(
        -4 A + 8 xx^\top + 4 x^\top x I \right) < 0.
    \end{equation}
    When $x = 0$, \eqref{eq:min-lambda} is obvious since $\lambda_1 >
    0$. When $x = \sqrt{\lambda} v$, we apply $\grad^2 f(x)$ to $v_1$
    and get
    \begin{equation}
        \grad^2 f(x) v_1 = 4 (\lambda - \lambda_1) v_1,
    \end{equation}
    where we have used the orthogonality between $v$ and $v_1$. Therefore
    $4(\lambda - \lambda_1)$ is a negative eigenvalue of $\grad^2 f(x)$
    and $\sqrt{\lambda} v$ is a strict saddle point.
\end{proof}

From the proof, we observe that for all the strict saddle points
of $f$, the leading eigenvector $v_1$ of $A$ is always an unstable
direction. This will help us in the convergence proof.

The following theorem follows directly from
Lemmas~\ref{lem:stationary-point} and \ref{lem:strict-saddle-point}.

\begin{theorem} \label{thm:global-minimizer}
    Under Assumption~\ref{ass:matA}, local minimizers of $f(x)$ are
    given by $\pm \sqrt{\lambda_1} v_1$ and both local minimizers of
    $f(x)$ are global minimizers.
\end{theorem}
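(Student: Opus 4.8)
The plan is to combine the two preceding lemmas with the observation that every stationary point other than $\pm\sqrt{\lambda_1}v_1$ has already been classified as a strict saddle point, and to argue that a strict saddle point cannot be a local minimizer. First I would recall from Lemma~\ref{lem:stationary-point} that the complete list of stationary points of $f$ is $\{0\} \cup \{\sqrt{\lambda}v : \lambda \text{ a positive eigenvalue of } A,\ v \in \nullsp{\lambda I - A},\ \norm{v}=1\}$, which splits into the two points $\pm\sqrt{\lambda_1}v_1$ coming from $\lambda=\lambda_1$ and all the remaining points coming from positive eigenvalues $\lambda < \lambda_1$ (together with $0$). Since a local minimizer is in particular a stationary point, any local minimizer must lie in this list.

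Next I would invoke Lemma~\ref{lem:strict-saddle-point}, which shows that $0$ and every point of the form $\sqrt{\lambda}v$ with $\lambda<\lambda_1$ a positive eigenvalue is a strict saddle point, i.e.\ $\lambda_{\text{min}}(\grad^2 f(y)) < 0$. It then remains to check the elementary fact that a strict saddle point cannot be a local minimizer: if $\grad^2 f(y)$ has a negative eigenvalue with eigenvector $w$, then the second-order Taylor expansion $f(y+tw) = f(y) + \tfrac{t^2}{2} w^\top \grad^2 f(y) w + o(t^2)$ shows $f(y+tw) < f(y)$ for all sufficiently small $t \neq 0$, contradicting the definition of local minimizer. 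Hence none of $0$ or the $\sqrt{\lambda}v$ with $\lambda<\lambda_1$ is a local minimizer, leaving $\pm\sqrt{\lambda_1}v_1$ as the only candidates.

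Finally I would confirm that $\pm\sqrt{\lambda_1}v_1$ actually \emph{are} local (indeed global) minimizers. Rather than a Hessian computation, the cleanest route is to use the eigendecomposition $A = \sum_i \lambda_i v_i v_i^\top$: writing $x = \sum_i c_i v_i$, one gets $f(x) = \norm{A - xx^\top}_F^2 = \sum_i (\lambda_i - c_i^2)^2 + \sum_{i\neq j} c_i^2 c_j^2$ (expanding in the orthonormal eigenbasis), from which a short argument shows the minimum value is $\sum_{i\geq 2}\lambda_i^2$, attained exactly when $c_1^2=\lambda_1$ and $c_i=0$ for $i\geq 2$, i.e.\ at $x=\pm\sqrt{\lambda_1}v_1$; both have the same objective value, so both are global minimizers. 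Combining the two parts gives the theorem.

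The main obstacle is mild: it is the need to verify that $\pm\sqrt{\lambda_1}v_1$ are genuinely minimizers rather than merely the only stationary points that are not strict saddles — a priori one must rule out their being degenerate saddle points where the Hessian is positive semidefinite but $f$ still decreases along some higher-order direction. The expansion of $f$ in the eigenbasis handles this cleanly and also delivers the global-minimality claim for free; alternatively, since by Lemma~\ref{lem:stationary-point} every stationary point other than $\pm\sqrt{\lambda_1}v_1$ is a strict saddle and $f$ is coercive (so a global minimizer exists and is a stationary point that is not a strict saddle), a global minimizer must be one of $\pm\sqrt{\lambda_1}v_1$, and by symmetry of $f$ under $x\mapsto -x$ both are.
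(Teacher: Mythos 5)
Your proposal is correct, and its skeleton (stationary points from Lemma~\ref{lem:stationary-point}, elimination of strict saddles via Lemma~\ref{lem:strict-saddle-point}, then verification that $\pm\sqrt{\lambda_1}v_1$ are global minimizers) matches the paper's. The difference is in the last step: the paper argues softly that $f \geq 0$ is coercive, so a global minimizer exists, and since the only remaining candidates are $\pm\sqrt{\lambda_1}v_1$ with equal objective values, both must be global (hence local) minimizers --- this is exactly your ``alternative'' route at the end. Your primary route instead expands $f$ in the eigenbasis, $f(x)=\sum_i(\lambda_i-c_i^2)^2+\sum_{i\neq j}c_i^2c_j^2 = \sum_i\lambda_i^2 - 2\sum_i\lambda_i c_i^2 + \bigl(\sum_i c_i^2\bigr)^2$, and minimizes directly to get the value $\sum_{i\geq 2}\lambda_i^2$ attained exactly at $x=\pm\sqrt{\lambda_1}v_1$ (using $\lambda_1>\lambda_2$ for uniqueness). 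That computation is slightly longer but buys more: it gives the explicit minimum value, confirms global minimality without appealing to existence-by-coercivity, and does not rely on the saddle classification for that part. You also make explicit the small step the paper leaves implicit, namely that a strict saddle point cannot be a local minimizer (second-order Taylor expansion along a negative-curvature eigenvector), which is a worthwhile addition. Both arguments are valid; either would serve as a complete proof.
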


\begin{proof}
    Lemma~\ref{lem:stationary-point} and \ref{lem:strict-saddle-point}
    imply that the only possible local minimizers are $\sqrt{\lambda_1}
    v_1$ and $-\sqrt{\lambda_1} v_1$. We easily check that the
    objective function values are equal at both $\sqrt{\lambda_1} v_1$
    and $-\sqrt{\lambda_1} v_1$:
    \begin{equation}
        f(\sqrt{\lambda_1} v_1) = f(-\sqrt{\lambda_1} v_1) = \norm{A -
        \lambda_1 v_1 v_1^\top }_F^2.
    \end{equation}
    Because $f(x) \geq 0$ is bounded from below and $\abs{f(x)}
    \to \infty$ as $x\to\infty$, global minimizers exist. Therefore,
    $\pm \sqrt{\lambda_1} v_1$ are both local minimizers as well as
    global minimizers.
\end{proof}

Theorem~\ref{thm:global-minimizer} shows that $f(x)$ has no spurious
local minima. If an iterative method converges to a local minimum of
$f(x)$, it achieves the global minimum. The remaining obstacle of the
global convergence is the (strict) saddle points.

\section{Coordinate-wise descent method with conservative stepsize}
\label{sec:CDM}

In this section, we discuss CD-Cyc-Grad, whose ``coordinate-update'' is
as the entries of the gradient multiplied by a stepsize.  The method has
guarantee that for almost all initial values, the iterative procedure
converges to the global minimum. While, the choice of stepsize
is problem-dependent and conservative.  According to our numerical
experiments, the number of iterations of these coordinate-wise gradient
based methods are too large to be competitive.

\begin{algorithm}[htpb]
    \caption{CD-Cyc-Grad for LEVP}
    \label{alg:CD-Cyc-Grad}

    Input: Symmetric matrix $A \in \bbR^{n \times n}$; initial vector
    $\vec{x}{0}{}$; stepsize $\gamma$.

\begin{algorithmic}[1]

    \STATE $\vec{z}{0}{} = A\vec{x}{0}{}$

    \STATE $\ell = 0$

    \WHILE {(not converged)}
        \STATE $j_{\ell} = ( \ell \mod n ) + 1$

        \STATE $\vec{x}{\ell+1}{j} = \left\{
            \begin{array}{ll}
                \vec{x}{\ell}{j} - \gamma \left( -4\vec{z}{\ell}{j} +
                4 \norm{\vec{x}{\ell}{}}^2 \vec{x}{\ell}{j}\right), &
                j=j_{\ell}\\ \vec{x}{\ell}{j}, & j \neq j_{\ell}
            \end{array}
            \right.$

        \STATE $\vec{z}{\ell+1}{} = \vec{z}{\ell}{} + A_{:,j_{\ell}}\left(
        \vec{x}{\ell+1}{j_{\ell}} - \vec{x}{\ell}{j_{\ell}} \right)$

        \STATE $\ell = \ell + 1$
    \ENDWHILE
\end{algorithmic}

\end{algorithm}

\subsection{CD-Cyc-Grad and SCD-Cyc-Grad}

The first coordinate-wise descent method we consider addressing
\eqref{eq:LEVP-opt} is CD-Cyc-Grad, following the name convention in
Section~\ref{sec:introCDM}.

CD-Cyc-Grad conducts the ``coordinate-pick'' step in
Algorithm~\ref{alg:CDM} in a cyclic way, i.e.,
\begin{equation}
    j_{\ell} = (\ell \mod n) + 1.
\end{equation}
Hence, all coordinates are picked in a fixed order with almost equal
number of updatings throughout iterations.  The ``coordinate-update''
adopts coordinate-wise gradient, that is
\begin{equation} \label{eq:grad-update}
    \vec{x}{\ell+1}{j_{\ell}} = \vec{x}{\ell}{j_{\ell}} - \gamma
    \grad_{j_{\ell}} f(\vec{x}{\ell}{}) = \vec{x}{\ell}{j_{\ell}}
    - \gamma \Bigl( -4A_{j_\ell,:} \vec{x}{\ell}{} + 4
    \norm{\vec{x}{\ell}{}}^2 \vec{x}{\ell}{j_\ell} \Bigr),
\end{equation}
where $\grad_{j_{\ell}}f(\vec{x}{\ell}{})$ denotes the $j_{\ell}$-th
entry of the gradient of $f$ at $\vec{x}{\ell}{}$ and $\gamma$ is the
stepsize.  CD-Cyc-Grad is detailed as Algorithm~\ref{alg:CD-Cyc-Grad},
with $\vec{z}{\ell}{} = A\vec{x}{\ell}{}$ being adopted in
\eqref{eq:grad-update}. The advantage of CD-Cyc-Grad over the full
gradient descent (GD) method is mainly that the choice of the stepsize
in CD-Cyc-Grad could be much larger than that in GD, which could lead
to faster convergence~\cite{Nesterov2012}.

Here we mention another widely used choice of the ``coordinate-pick''
called random cyclic, which involves randomness. In the beginning
of every $n$ iterations, a random permutation $\Pi^{(\ell)}$ of the
indices $1,\dots,n$ is generated. $\Pi^{(\ell)}$ is a vector of size $n$
and the superscript $\ell$ denotes the iteration number when the random
permutation is generated. Once the random permutation is provided, the
following $n$ iteration update the coordinate according to the order in
$\Pi^{(\ell)}$. Recently, several works~\cite{Gurbuzbalaban2017, Lee2016,
Sun2016} discussed the comparison of the convergence rates for CDMs with
cyclic and random strategy of ``coordinate-pick'' for convex problems.

\subsection{Global convergence of gradient based coordinate-wise descent
method}

In this section, we show the global convergence of CD-Cyc-Grad
in Theorems~\ref{thm:CD-Cyc-Grad}. In this paper, global convergence means the iterate points converge to the global minimum as the number of iterations goes to infinity. For simplicity of the argument,
here we further restrict the assumption of $A$ such that the positive
eigenvalues of $A$ are distinct.

The following theorem establish the global convergence (up to measure
$0$ set of initial conditions) of the Algorithms~\ref{alg:CD-Cyc-Grad}
without convergence rate.

\begin{theorem} \label{thm:CD-Cyc-Grad} Let
    $R \geq \sqrt{\max_j \norm{A_{:,j}}}$ be a constant and $\gamma
    \leq \frac{1}{4(n+4)R^2}$ be the stepsize.  Assume the iteration
    follows CD-Cyc-Grad as in Algorithm~\ref{alg:CD-Cyc-Grad} and the
    iteration starts from the domain $W_0 = \left\{ x: \norm{x}_\infty <
    R \right\}$.  The iteration converges to global minima for all
    $\vec{x}{0}{}
    \in W_0$ up to a measure zero set.
\end{theorem}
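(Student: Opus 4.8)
The proof splits into two independent pieces: (i) the iteration stays in a bounded region and every trajectory converges to *some* stationary point of $f$, and (ii) the set of initial conditions converging to a strict saddle point (or to $0$) has measure zero. For piece (i), I would first check that $W_0$ is forward-invariant: if $\norm{x^{(\ell)}}_\infty < R$, then the single-coordinate gradient step with $\gamma \le \tfrac{1}{4(n+4)R^2}$ keeps $\norm{x^{(\ell+1)}}_\infty < R$. This is where the specific stepsize bound is used — one needs that $\norm{x^{(\ell)}}^2 \le n R^2$ on $W_0$, so the coordinate-wise Lipschitz constant of $\grad_j f$ along $e_j$ is at most $4(\norm{x}^2 + 2x_j^2 + \norm{A_{:,j}}) \le 4((n+2)R^2 + R^2) = 4(n+3)R^2 < 1/\gamma$ (the $n+4$ leaves a little slack), so each coordinate step is a genuine descent step that contracts toward the coordinate minimum and cannot overshoot out of the box. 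Summing the per-step decrease of $f$ over a full cycle of $n$ coordinates gives a sufficient-decrease inequality of the form $f(x^{(\ell)}) - f(x^{(\ell+n)}) \ge c\sum_{j}|x^{(\ell)}_j - x^{(\ell+1)}_j|^2$; since $f \ge 0$, the successive differences are square-summable, the iterates have a convergent subsequence (boundedness), and a standard argument (as in cyclic block-coordinate descent analyses) upgrades this to: every limit point is a stationary point and, because the stationary set is discrete here by Lemma~\ref{lem:stationary-point} together with the distinct-positive-eigenvalue assumption, the whole sequence converges to one stationary point.

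**The measure-zero part.** This is the main obstacle and the part I would spend the most care on. The cyclic CDM is a deterministic map $x \mapsto T(x)$, where $T = T_n \circ \cdots \circ T_1$ and each $T_j$ updates only coordinate $j$ by a gradient step; $T$ is $C^1$ (in fact smooth) because $f$ is a polynomial and the stepsize is fixed. Fixed points of $T$ are exactly the stationary points of $f$ (each $T_j$ fixes a point iff $\grad_j f = 0$ there). I would invoke the stable-manifold / Ostrowski-type argument used by \citet{Lee2017a} and \citet{Lee2016} for coordinate descent: at a strict saddle point $y$, I need to show the differential $DT(y)$ has a spectral radius $> 1$, equivalently an eigenvalue outside the unit disk; then the stable set of $y$ is a $C^1$ manifold of dimension $< n$, hence measure zero, and the countable union over all saddles (finitely many stationary points here) plus the point $0$ is still measure zero. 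The key computation is $DT_j(x) = I - \gamma\, \grad_j^2 f(x)\, e_j e_j^\top$ restricted appropriately — more precisely $DT_j(x) = I - \gamma e_j (\grad^2 f(x))_{j,:}$ — so $DT(y) = \prod_{j=n}^{1}(I - \gamma e_j (\grad^2 f(y))_{j,:})$. Using the observation highlighted right after Lemma~\ref{lem:strict-saddle-point} — that $v_1$ is an unstable direction for every strict saddle, with $\grad^2 f(y) v_1 = 4(\lambda - \lambda_1) v_1$ and $\lambda < \lambda_1$ — I would show the Gauss–Seidel-type iteration matrix $DT(y)$ inherits an expanding direction: the cleanest route is the classical fact that if $B = \grad^2 f(y)$ is symmetric with a negative eigenvalue and $\gamma$ is small enough that $I - \gamma\,\mathrm{diag}(B) \succ 0$, then the Gauss–Seidel splitting matrix $M = (D - \gamma^{-1}L)^{-1}(\text{etc.})$... — cleaner still, one shows $\rho(DT(y)) > 1$ by exhibiting a vector $u$ with a quadratic-form argument: $f$ strictly decreases along $T$ except at fixed points, and at a saddle there is a descent direction, forcing some eigenvalue of $DT(y)$ to have modulus exceeding $1$; I would make this rigorous via the energy function $f$ acting as a Lyapunov function that is *not* minimized at $y$, combined with the Hartman–Grobman-flavored stable manifold theorem for $C^1$ diffeomorphisms (noting $T$ is a local diffeo near $y$ since $\gamma$ is small, so $DT(y)$ is invertible).

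**Assembling.** Finally I would combine: by piece (i), from any $x^{(0)} \in W_0$ the iteration converges to a stationary point, which by Lemma~\ref{lem:stationary-point} is either $0$, one of the lower saddles $\sqrt{\lambda}v$, or $\pm\sqrt{\lambda_1}v_1$; by piece (ii), the set of $x^{(0)}$ converging to any of the first two types is contained in a finite union of measure-zero stable manifolds, hence has measure zero; so for Lebesgue-almost every $x^{(0)} \in W_0$ the limit is $\pm\sqrt{\lambda_1}v_1$, a global minimizer by Theorem~\ref{thm:global-minimizer}. The two remaining technical gaps I expect to fight with are: confirming the forward-invariance of $W_0$ cleanly enough that the global stable manifold (not just local) of each saddle is still measure zero — this needs the observation that $T$ restricted to $W_0$ is an injective $C^1$ map so preimages of measure-zero sets are measure zero — and verifying $\rho(DT(y)) > 1$ for the Gauss–Seidel-type product matrix rather than for a single symmetric Hessian step, which is the genuinely non-routine linear-algebra lemma at the heart of the argument.
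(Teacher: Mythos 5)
Your overall architecture (forward invariance of $W_0$ plus sufficient decrease, then a stable-manifold argument to exclude convergence to strict saddles off a measure-zero set) matches the paper's, but the step you yourself identify as ``the genuinely non-routine linear-algebra lemma at the heart of the argument'' is left unproved, and the heuristic you offer in its place is not a valid inference. Monotone decrease of $f$ together with the existence of a descent direction at a saddle is a statement about Lyapunov-type instability of the nonlinear map; it does not imply that the Jacobian of the full cycle, $Dg(y)=\prod_{j}\bigl(I-\gamma e_j e_j^\top \grad^2 f(y)\bigr)$, has an eigenvalue of modulus strictly greater than one, which is precisely what the center-stable manifold theorem requires to conclude that the stable set of $y$ has dimension less than $n$ (a map can strictly decrease an energy away from a fixed point while its linearization has spectrum inside the closed unit disk, in which case the theorem yields no dimension count). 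The expanding-eigenvalue property for this Gauss--Seidel-type product is a nontrivial spectral fact; the paper does not reprove it but instead verifies the hypotheses (iterates stay in $W_0$, coordinate-wise Hessian bound $L=4(n+3)R^2$, and $\gamma<1/L$) and then invokes Proposition 4 of \cite{Lee2017a} for $\det(Dg)\neq 0$ and Corollary 1 of \cite{Lee2017a} for the measure-zero stable set of strict saddles, combined with $\norm{\grad f(\vec{x}{\ell}{})}\to 0$. So your plan must either actually prove that spectral lemma or cite it; as written there is a genuine gap exactly there.

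A secondary gap is the forward invariance of $W_0$: the bound ``coordinate-wise Lipschitz constant $<1/\gamma$, hence the step cannot overshoot out of the box'' is not sufficient, because the Hessian bound holds only inside the box and $\gamma\le 1/L$ only gives monotonicity of the one-dimensional update map $h(x_j)=x_j-\gamma\grad_j f(x)$ on $(-R,R)$; the coordinate minimizer could a priori lie outside the box, so monotonicity alone does not confine the image. One must additionally check that the boundary maps inward, i.e. $h(R)<R$ and $h(-R)>-R$, which is where the hypothesis $R\ge\sqrt{\max_j\norm{A_{:,j}}}$ genuinely enters (via H\"older, $h(R)\le R+4\gamma\norm{x}\left(\norm{A_{:,j}}-R^2\right)<R$); this, together with locating the critical points of the cubic $h$ outside $(-R,R)$, is the content of the paper's invariance lemma. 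Your additional claim of whole-sequence convergence via discreteness of the stationary set is unnecessary for the paper's route (which only proves $\norm{\grad f(\vec{x}{\ell}{})}\to 0$ and lets the cited corollary do the rest) but is harmless. These secondary points are fixable with the computations indicated; the unproven expanding-eigenvalue claim is the substantive missing piece.
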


The proof of the theorem can be found in
Appendix~\ref{app:CD-Cyc-Grad}. The idea of the proof follows the
recent work \cite{Lee2017a}.  Comparing the choice of stepsize in
Theorem~\ref{thm:CD-Cyc-Grad} with the CDM stepsize for some other
optimization problems, for example~\cite{Richtarik2014, Wright2015},
the stepsize $\gamma$ here is about a fraction of $1/n$ smaller. This
is due to the fact that the diagonal of the Hessian of $f(x)$, as in
\eqref{eq:min-lambda}, is unbounded from above.  The choice of $R$
and $\gamma$ ensures that the iteration stays within $W_0$, in which
the Hessian of $f$ remains bounded. It is worth pointing out that,
according to our numerical experiments, when $\vec{x}{0}{}$ is set to
be close to the boundary of $W_0$, larger $\gamma$ leads to divergent
iteration. When $\vec{x}{0}{}$ is set close to the origin, the stepsize
$\gamma$ can be tuned sightly larger for the our testing cases with some
randomly generated matrix $A$. Although the choice of $\gamma$ is very
restrictive and leads to slow convergence, CD-Cyc-Grad has a guarantee
of global convergence up to a measure zero set of initial points.

\section{Greedy coordinate-wise descent method}
\label{sec:GCDM}

In this section, we will review a greedy CDM, GCD-Grad-LS, and present
another fully greedy CDM, named GCD-LS-LS. Both greedy methods update
the coordinate according to exact line search, and thus the stepsize
could be much larger than the conservative choice in CD-Cyc-Grad.
GCD-Grad-LS selects coordinate according to the magnitude of the gradient
vector and then performs the exact line search along that coordinate,
while GCD-LS-LS conducts an exact line search along all coordinate
directions and move to the minimizer.  We show the advantage of the
exact line search in GCD-LS-LS, as it can escape certain saddle points
of $f$, which will be discussed in Section 4.2.

\subsection{GCD-Grad-LS and GCD-LS-LS}\label{sec:GCD-Grad-LS}

We first review GCD-Grad-LS, proposed in~\cite{Lei2016}. One of the
most widely used greedy strategy in the ``coordinate-pick'' is the
Gauss-Southwell rule~\cite{Southwell2010},
\begin{equation}
    j_\ell = \argmax_j \abs{ \grad_j \func{f}{ \vec{x}{\ell}{} }},
\end{equation}
which is the ``coordinate-pick'' strategy in GCD-Grad-LS. Since the
Gauss-Southwell rule selects the coordinate according to the magnitude
of the gradient vector, we denote such strategy as ``Grad'' under the
name convention.

Once the coordinate is selected, we solve the minimization problem for
the exact line search,
\begin{equation} \label{eq:cubicopt}
    \alpha_{j_\ell} = \argmin_{\alpha} f\bigl( \vec{x}{\ell}{} + \alpha
    e_{j_\ell} \bigr).
\end{equation}
Since $h(\alpha) = f\bigl( \vec{x}{\ell}{} + \alpha e_{j_\ell} \bigr)$
is a quartic polynomial in $\alpha$, solving the minimization problem
is equivalent to find the roots of $h'(\alpha) = 0$. Straightforward
calculation shows that,
\begin{equation} \label{eq:cubicroot}
    h'(\alpha) = 4 \bigl( \alpha^3 + b_{j_\ell} \alpha^2 + c_{j_\ell}
    \alpha + d_{j_\ell} \bigr) = 0,
\end{equation}
where the coefficients are defined as 
\begin{equation}
    b_{j_\ell} = 3\vec{x}{\ell}{j_\ell}, \quad
    c_{j_\ell} = \bigl\lVert \vec{x}{\ell}{}\bigr\rVert^2 +
    2\bigl(\vec{x}{\ell}{j_\ell}\bigr)^2
    - A_{j_\ell,j_\ell}, \quad
    \text{and} \quad 
    d_{j_\ell} =
    \bigl\lVert\vec{x}{\ell}{}\bigr\rVert^2\vec{x}{\ell}{j_\ell} -
    \vec{z}{\ell}{j_\ell}.
\end{equation}
We notice that the coefficients of the cubic polynomial requires
$\bigO(n)$ operations to compute. Once the coefficients are
calculated, solving \eqref{eq:cubicroot} can be done in $\bigO(1)$
operations.  There could be multiple roots of \eqref{eq:cubicroot},
and we can use the following \emph{root picking strategy} to find 
the one minimizing $h(\alpha)$:
\begin{itemize}
    \item If there is only one root of \eqref{eq:cubicroot}, then it
    minimizes $h(\alpha)$.
    \item If there are two roots of $\eqref{eq:cubicroot}$, according
    to the property of cubic polynomial, one of them must be single
    root and the other one is of multiplicity two. The single root
    minimizes $h(\alpha)$.
    \item If there are three roots in a row, the middle one is a local
    maximizer of $h(\alpha)$ and the root further away from the middle
    one minimizes $h(\alpha)$.
\end{itemize}
Therefore, with this \emph{root picking strategy} to find the minimizer
of $h(\alpha)$, the solution of~\eqref{eq:cubicopt} can be achieved in
$\bigO(n)$ operations.

\begin{algorithm}[htp]
    \caption{GCD-Grad-LS for LEVP}
    \label{alg:GCD-Grad-LS}

    Input: Symmetric matrix $A \in \bbR^{n \times n}$; initial vector
    $\vec{x}{0}{}$.

\begin{algorithmic}[1]

    \STATE $\vec{z}{0}{} = A\vec{x}{0}{}$

    \STATE $\ell = 0$
    
    \WHILE {(not converged)}

        \STATE $\nu = \norm{\vec{x}{\ell}{}}^2$

        \STATE $j_\ell = \argmax_j \abs{ \nu \vec{x}{\ell}{j} -
        \vec{z}{\ell}{j}}$

        \STATE $b_{j_\ell} = 3\vec{x}{\ell}{j_\ell}$

        \STATE $c_{j_\ell} = \nu + 2\bigl( \vec{x}{\ell}{j_\ell} \bigr)^2 -
        A_{j_\ell,j_\ell}$

        \STATE $d_{j_\ell} = \nu \vec{x}{\ell}{j_\ell} - \vec{z}{\ell}{j_\ell}$

        \STATE Solve $\alpha^3 + b_{j_\ell} \alpha^2 + c_{j_\ell} \alpha
        + d_{j_\ell} = 0$ with the \emph{root picking strategy} for
        $\alpha_{j_\ell}$

        \STATE $\vec{x}{\ell+1}{j} = \begin{cases}
                \vec{x}{\ell}{j} + \alpha_{j_\ell}, & j=j_\ell\\
                \vec{x}{\ell}{j}, & j \neq j_\ell
              \end{cases}$
            
        \STATE $\vec{z}{\ell+1}{} = \vec{z}{\ell}{} +
        A_{:,j_\ell}\alpha_{j_\ell}$

        \STATE $\ell = \ell + 1$

    \ENDWHILE
\end{algorithmic}

\end{algorithm}

Algorithm~\ref{alg:GCD-Grad-LS} describes the steps of GCD-Grad-LS
in detail. The local convergence of GCD-Grad-LS can be established:
GCD-Grad-LS can be viewed as a special case of SCD-Grad-LS($t$)
with $t=\infty$ and $k=1$, which will be discussed later in
Section~\ref{sec:local-conv}; We will prove the local convergence
of SCD-Grad-LS for all $t \geq 0$, where the local convergence of
GCD-Grad-LS is automatically implied.

We observe that in \eqref{eq:cubicroot}, all the $\bigO(n)$ computational
cost comes from the calculation of $\norm{\vec{x}{\ell}{}}^2$. Once
$\nu = \norm{\vec{x}{\ell}{}}^2$ is pre-calculated, all coefficients in
\eqref{eq:cubicroot} can be calculated in $\bigO(1)$ operations and hence
\eqref{eq:cubicopt} can be solved in $\bigO(1)$ operations. Therefore,
applying this to each coordinate, once $\nu$ is pre-calculated, solving
\eqref{eq:cubicopt} for all coordinates can be done in $\bigO(n)$
operations. This leads us to investigate the possibility of conducting
exact line search along all coordinates,
\begin{equation} \label{eq:greedyLS-opt}
    \begin{split}
        & j_\ell,\alpha_{j_\ell} = \argmin_{j,\alpha}
        f\left( \vec{x}{\ell}{}
        + \alpha e_j \right) \\
        \iff &
        j_\ell = \argmin_{j} \func{f}{ \vec{x}{\ell}{} + \alpha_j e_j }
        \quad \mathrm{with} \quad \alpha_j = \argmin_\alpha \func{f}{
        \vec{x}{\ell}{} + \alpha e_j }.
    \end{split}
\end{equation}
Based on the discussion above, $\alpha_j$ for all $j=1,\dots,n$ can be
obtained in $\bigO(n)$ operations. Now, we will show that evaluating
the difference of $\func{f}{ \vec{x}{\ell}{} + \alpha_j e_j }$ and
$\func{f}{ \vec{x}{\ell}{} }$ for all $j$ can be done in $\bigO(n)$
operations as well. Therefore the minimization problem of $j$ in the
second line of \eqref{eq:greedyLS-opt} can be solved efficiently.
Through tedious but straightforward calculation, we obtain
\begin{equation} \label{eq:deltaf}
    \Delta f_j := \func{f}{ \vec{x}{\ell}{} + \alpha_j e_j } - \func{f}{
    \vec{x}{\ell}{} }  = \alpha_j^4 + \frac{4b_j}{3} \alpha_j^3 +
    2c_j \alpha_j^2 + 4d_j\alpha_j
\end{equation}
where $b_j, c_j$ and $d_j$ are defined analogous to
\eqref{eq:cubicroot}. Combining \eqref{eq:cubicroot} and
\eqref{eq:deltaf}, we conclude that \eqref{eq:greedyLS-opt} is achievable
in $\bigO(n)$ operations. The corresponding method is described in
Algorithm~\ref{alg:GCD-LS-LS}.

\begin{algorithm}[htp]
    \caption{GCD-LS-LS for LEVP}
    \label{alg:GCD-LS-LS}

    Input: Symmetric matrix $A \in \bbR^{n \times n}$; initial vector
    $\vec{x}{0}{}$.

\begin{algorithmic}[1]

    \STATE $\vec{z}{0}{} = A\vec{x}{0}{}$

    \STATE $\ell = 0$

    \WHILE {(not converged)}

        \STATE $\nu = \norm{\vec{x}{\ell}{}}^2$

        \FOR {$j = 1, 2, \dots, n$}

            \STATE $b_{j} = 3\vec{x}{\ell}{j}$

            \STATE $c_{j} = \nu + 2\bigl( \vec{x}{\ell}{j} \bigr)^2
            - A_{j,j}$

            \STATE $d_{j} = \nu \vec{x}{\ell}{j} - \vec{z}{\ell}{j}$

            \STATE Solve $\alpha^3 + b_{j} \alpha^2 + c_{j} \alpha
            + d_{j} = 0$ with the \emph{root picking strategy} for
            $\alpha_{j}$

            \STATE $ \Delta f_j = \alpha_j^4 + \frac{4b_j}{3} \alpha_j^3
            + 2c_j \alpha_j^2 + 4d_j\alpha_j$

        \ENDFOR

        \STATE $j_\ell = \argmin_j \Delta f_j$

        \STATE $\vec{x}{\ell+1}{j} = \begin{cases}
                \vec{x}{\ell}{j} + \alpha_j, & j=j_\ell\\
                \vec{x}{\ell}{j}, & j \neq j_\ell
              \end{cases}$

        \STATE $\vec{z}{\ell+1}{} = \vec{z}{\ell}{} + A_{:,j_\ell}
        \alpha_{j_\ell}$

        \STATE $\ell = \ell + 1$
    \ENDWHILE
\end{algorithmic}

\end{algorithm}

Similar to GCD-Grad-LS, the local convergent of GCD-LS-LS can be
established; we will defer the local convergence analysis to the end
of Section~\ref{sec:local-conv}.

\subsection{Escapable saddle points using exact line search}
\label{sec:escapable-saddle}

This section discusses one advantage in working with the exact line
search along all coordinate directions, \eqref{eq:greedyLS-opt}, of
escaping some of the saddle points. The iteration of GCD-LS-LS can be
summarized as
\begin{equation}\label{eq:greedy-iter} 
    \begin{aligned}
        & \vec{x}{\ell+1}{} = \vec{x}{\ell}{} + \alpha_\ell e_{j_\ell}, \\
        & \text{with } \alpha_\ell,j_\ell = \argmin_{\alpha,j} f
        \bigl(\vec{x}{\ell}{} + \alpha e_j \bigr).
    \end{aligned}
\end{equation}

\begin{theorem} \label{thm:greedy}
    Assume Assumption~\ref{ass:matA} holds and $x^s \neq 0$ is a strict
    saddle point associated with eigenvalue $\lambda < \max_i A_{i,i}$.
    There exists a constant $\delta_0$ such that if $\norm{x^{(0)} -
    x^s} < \delta_0$, then $f(x^{(1)}) < f(x^s)$.
\end{theorem}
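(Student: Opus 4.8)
The plan is to exploit the key structural fact, noted right after Lemma~\ref{lem:strict-saddle-point}, that at any nonzero strict saddle point $x^s = \sqrt{\lambda}\,v$ (with $\lambda<\lambda_1$ and $v\in\nullsp{\lambda I - A}$, $\|v\|=1$), the leading eigenvector $v_1$ is an unstable direction: $\grad^2 f(x^s)\,v_1 = 4(\lambda-\lambda_1)v_1$ with $\lambda-\lambda_1<0$. The GCD-LS-LS step \eqref{eq:greedy-iter} moves along the \emph{best} coordinate direction, i.e.\ $f(x^{(1)}) = \min_{j,\alpha} f(x^{(0)}+\alpha e_j) \le \min_j f(x^{(0)} + \alpha_j^\star e_j)$ for \emph{any} admissible choice of $\alpha_j^\star$. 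So it suffices to exhibit \emph{one} coordinate $j$ and \emph{one} scalar $\alpha$ with $f(x^{(0)}+\alpha e_j) < f(x^s)$ whenever $x^{(0)}$ is close enough to $x^s$. First I would pick the coordinate $j^\star = \argmax_i A_{i,i}$; the hypothesis $\lambda < \max_i A_{i,i} = A_{j^\star j^\star}$ is exactly what makes this coordinate special.

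Next I would analyze the quartic $h_j(\alpha) = f(x^s + \alpha e_j)$ at the saddle point itself. Using the coefficient formulas from \eqref{eq:cubicroot}, at $x = x^s$ one has $c_{j^\star} = \|x^s\|^2 + 2(x^s_{j^\star})^2 - A_{j^\star j^\star} = \lambda + 2(x^s_{j^\star})^2 - A_{j^\star j^\star}$. I would show that one can choose the coordinate so that the second-derivative-type coefficient is negative: indeed the relevant quantity controlling whether $\alpha=0$ is a local max of $h_{j^\star}$ along $e_{j^\star}$ is governed by $\grad^2 f(x^s)$ restricted to $e_{j^\star}$, namely $e_{j^\star}^\top \grad^2 f(x^s) e_{j^\star} = -4A_{j^\star j^\star} + 8(x^s_{j^\star})^2 + 4\lambda$. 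This need not be negative directly, so instead I would argue more globally: the function $g(x) := f(x)$ restricted to the affine line $\{x^s + \alpha e_{j^\star}\}$ cannot have $x^s$ as a local minimizer, because if it did for \emph{every} coordinate then $x^s$ would be a coordinate-wise local min, and combined with a short argument (e.g.\ that along the curve $t\mapsto$ suitable combination, or directly via the unstable direction $v_1$ decomposed into coordinates — at least one coordinate $j$ must have $e_j^\top \grad^2 f(x^s) e_j < 0$ OR the line-search minimizer $\alpha_j$ strictly decreases $f$) one gets a contradiction with $x^s$ being only a saddle. Concretely: since $\grad f(x^s)=0$ and $\lambda_{\min}(\grad^2 f(x^s))<0$ with $v_1$ an eigenvector for a negative eigenvalue, write $v_1 = \sum_i (v_1)_i e_i$; then $0 > v_1^\top \grad^2 f(x^s) v_1$, but expanding this in the coordinate basis and using that the off-diagonal structure of $\grad^2 f(x^s) = -4A + 8 x^s (x^s)^\top + 4\lambda I$ is just $-4A$ off the rank-one and diagonal parts, I would extract that the diagonal entry at $j^\star$ (the index maximizing $A_{ii}$) is the most negative candidate, yielding $e_{j^\star}^\top \grad^2 f(x^s) e_{j^\star} = -4A_{j^\star j^\star} + 4\lambda + 8(x^s_{j^\star})^2 < 0$ \emph{provided} $(x^s_{j^\star})^2$ is small enough — which requires a separate bound. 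If that fails, the cleaner route is: the line search along $e_{j^\star}$ solves a cubic; I would show its minimizing root $\alpha^\star \ne 0$ and that $\Delta f_{j^\star}(x^s) = f(x^s + \alpha^\star e_{j^\star}) - f(x^s) < 0$ strictly, using formula \eqref{eq:deltaf} and the fact that $x^s$ is not a strict local min of $f$ (so \emph{some} coordinate line search strictly decreases $f$ at $x^s$; relabel it $j^\star$).

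Once I have a fixed coordinate $j^\star$ and a fixed $\alpha^\star$ with $\eta := f(x^s) - f(x^s + \alpha^\star e_{j^\star}) > 0$, the final step is a continuity/perturbation argument. The map $x \mapsto \min_{j,\alpha} f(x + \alpha e_j)$ is upper semicontinuous (in fact continuous), being a minimum over a compact parameter range — one restricts $\alpha$ to a bounded interval since $f(x+\alpha e_j)\to\infty$ as $|\alpha|\to\infty$ uniformly for $x$ near $x^s$ — of the continuous function $(x,\alpha)\mapsto f(x+\alpha e_j)$. Hence there is $\delta_0>0$ so that for $\|x^{(0)} - x^s\| < \delta_0$,
\[
    f(x^{(1)}) = \min_{j,\alpha} f(x^{(0)} + \alpha e_j) \le f(x^{(0)} + \alpha^\star e_{j^\star}) < f(x^s) - \tfrac{\eta}{2} + (\text{small}) < f(x^s),
\]
where the middle inequality uses continuity of $f$ to compare $f(x^{(0)}+\alpha^\star e_{j^\star})$ with $f(x^s + \alpha^\star e_{j^\star})$. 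This closes the argument.

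The main obstacle I anticipate is cleanly establishing that the \emph{specific} coordinate $j^\star = \argmax_i A_{ii}$ (as the hypothesis $\lambda < \max_i A_{ii}$ signals) is the right one and that the line search along it strictly decreases $f$ at $x^s$ — the naive diagonal-Hessian computation $-4A_{j^\star j^\star}+4\lambda+8(x^s_{j^\star})^2$ is negative only when $(x^s_{j^\star})^2 < \tfrac12(A_{j^\star j^\star}-\lambda)$, which may require either an extra hypothesis, a smarter choice of coordinate/direction, or appealing to the full cubic line-search (which can decrease $f$ even when the local second derivative is nonnegative). Nailing down exactly why $\max_i A_{ii} > \lambda$ suffices — presumably via a pigeonhole on the eigenvector coordinates of $v_1$ or a trace argument showing $\sum_i (x^s_i)^2 = \lambda$ forces $(x^s_{j^\star})^2$ to be controlled — is the crux of the proof.
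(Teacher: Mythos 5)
Your overall architecture matches the paper's: single out the coordinate $j^\star=\argmax_i A_{i,i}$, show that the exact line search along $e_{j^\star}$ already achieves a strict decrease \emph{at the saddle point itself}, and then transfer this to nearby $x^{(0)}$ by a continuity/perturbation argument (your final continuity step is fine, and even a bit lighter than the paper's explicit quartic bound $p(\delta)$). However, the crux — why the line search along $e_{j^\star}$ strictly decreases $f$ at $x^s$ under only the hypothesis $\lambda<A_{j^\star,j^\star}$ — is exactly the step you leave unresolved, and the fallback arguments you sketch for it do not work. In particular, the claim ``$x^s$ is not a local min of $f$, hence some coordinate line search strictly decreases $f$ at $x^s$'' is false in general: a strict saddle can perfectly well be a local (even global) minimizer along every coordinate line, with the descent direction only appearing as a mixed/diagonal combination; this is precisely why the theorem needs the assumption $\lambda<\max_i A_{i,i}$ rather than holding for all saddle points, and why the paper contrasts its result with the coordinate-wise-local-minimum convergence of Tseng. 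Likewise, your diagonal-Hessian route correctly stalls: $e_{j^\star}^\top\grad^2 f(x^s)e_{j^\star}=-4A_{j^\star,j^\star}+4\lambda+8(x^s_{j^\star})^2$ need not be negative, and no pigeonhole or trace bound on $(x^s_{j^\star})^2$ is available or needed.

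The missing idea is a short exact computation that bypasses the second-order test entirely. Writing the one-dimensional restriction at the saddle point explicitly (this is the constant term $C_0$ in the paper's expansion),
\begin{equation*}
    f\bigl(x^s+\beta e_{j^\star}\bigr)-f(x^s)
    \;=\;\beta^2\Bigl(\beta^2+4x^s_{j^\star}\beta
    +2\lambda+4\bigl(x^s_{j^\star}\bigr)^2-2A_{j^\star,j^\star}\Bigr)
    \;=:\;q(\beta),
\end{equation*}
one checks that the discriminant of the inner quadratic is
$16\bigl(x^s_{j^\star}\bigr)^2-4\bigl(2\lambda+4\bigl(x^s_{j^\star}\bigr)^2-2A_{j^\star,j^\star}\bigr)
=8\bigl(A_{j^\star,j^\star}-\lambda\bigr)>0$:
the $\bigl(x^s_{j^\star}\bigr)^2$ terms cancel, so no control on $x^s_{j^\star}$ is required. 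Hence the inner quadratic is negative on a nonempty open interval, and any nonzero $\beta_1$ in that interval gives $q(\beta_1)<0$, i.e.\ a fixed strict decrease $\eta=-q(\beta_1)>0$ along $e_{j^\star}$ at $x^s$ — even when the diagonal Hessian entry is nonnegative, because the quartic dips below $f(x^s)$ away from $\beta=0$. With this in hand, your continuity step (greedy step $\leq$ this particular candidate, plus continuity of $f$ at $x^s+\beta_1 e_{j^\star}$) closes the proof exactly as you outlined; the paper implements the same transfer by bounding the perturbation terms with Cauchy--Schwarz into a polynomial $p(\delta)$ with $p(0)=q(\beta_1)<0$. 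As written, though, your proposal leaves the decisive inequality unproved, so it is incomplete.
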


Since exact line search in all direction guarantees that
$\func{f}{\vec{x}{\ell+1}{}} < \func{f}{\vec{x}{\ell}{}}$, the theorem
guarantees that the iteration will never come back to the neighborhood
of the saddle point.

\begin{proof}
    Without loss of generality, we assume that $A_{1,1} = \max_i
    A_{i,i}$. 

    Let $\Delta x = x - x^s$ and $\norm{\Delta x} = \delta$. We update
    $x$ by $\beta_1 e_1$.  The update of the objective function is
    given by
    \begin{equation} \label{eq:thm-decay} 
        f\left(x + \beta_1 e_1\right) = f(x^s) + C_0 + \Delta x^\top C_1 +
        \Delta x^\top C_2 \Delta x + (\Delta x^\top \Delta x) \Delta
        x^\top C_3 +
        (\Delta x^\top \Delta x)^2,
    \end{equation}
    where
    \begin{equation}
        \begin{split}
            C_0 & =  \beta_1^2 \left( \beta_1^2 + 4 x^s_1 \beta_1 +
            2\lambda + 4 \left( x^s_1 \right)^2 - 2A_{1,1} \right) =:
            q(\beta_1), \\
            C_1 &=  -4 \beta_1 A_{:,1} + 4 \beta_1 \lambda e_1 + 8
            \beta_1 x^s_1 x^s + 4 \beta_1^2 x^s + 8 \beta_1 x^s_1 e_1 +
            4 \beta_1^3 e_1, \\
            C_2 &=  -2A + 4 x^s \left(x^s\right)^\top + 4 \beta_1 e_1
            e_1^\top + 2\lambda I + 4 \beta_1 e_1 \left( x^s
            \right)^\top +
            4 \beta_1 x^s e_1^\top + 4 \beta_1 x^s_1 I + 2 \beta_1^2 I, \\
            C_3 &=  4 x^s + 4 \beta_1 e_1.
        \end{split}
    \end{equation}
    Applying Cauchy-Schwartz inequality to the last four terms in
    \eqref{eq:thm-decay}, we have
    \begin{equation}
        f(x+\beta_1e_1) \leq f(x^s) + q(\beta_1) +
        \norm{C_1}\delta + \norm{C_2}_2\delta^2 + \norm{C_3}\delta^3 +
        \delta^4 = f(x^s) + p(\delta),
    \end{equation}
    where $p(\delta) = q(\beta_1) + \norm{C_1}\delta +
    \norm{C_2}_2\delta^2 + \norm{C_3}\delta^3 + \delta^4$ is a quartic
    polynomial in $\delta$ with constant coefficient (independent of
    $\Delta x$).  Observe that for the polynomial in $\beta$
    \begin{equation}
        q(\beta) = \beta^2 \bigl( \beta^2 + 4 x^s_1 \beta + 2\lambda +
        4 \left( x^s_1 \right)^2 - 2A_{1,1} \bigr),
    \end{equation}
    $\beta^2$ is always positive, and the discriminant of the remaining
    $2$-nd order polynomial is $\Delta = 8(A_{1,1}-\lambda) > 0$ by
    assumption of $\lambda$. Thus we can choose $\beta_1$ such that
    $q(\beta_1) < 0$, and hence $p(\delta = 0) = q(\beta_1) < 0$.

    By continuity, there exists $\delta_0 > 0$ such that $\forall 0
    \leq \delta < \delta_0$, $p(\delta) < 0$.  Hence, following the
    greedy coordinate-wise iteration as in \eqref{eq:greedy-iter}
    and $\vec{x}{0}{} = x$ for $\norm{x-x^s} \leq \delta < \delta_0$,
    we obtain,
    \begin{equation}
        f\bigl(\vec{x}{1}{}\bigr) = f\bigl( \vec{x}{0}{} + \alpha_\ell
        e_{j_\ell}\bigr) \leq f\bigl( \vec{x}{0}{} + \beta_1 e_1\bigr)
        < f(x^s).
    \end{equation}
    The iteration escapes the strict saddle point $x^s$ in one step.
\end{proof}

To authors' best knowledge, the analysis of global convergence of greedy
coordinate-wise descent method is still open for non-convex objective
function.  Theorem~\ref{thm:greedy} provides more insights of the
behavior of the methods around the saddle points of \eqref{eq:LEVP-opt}.
The number of problematic saddle points can be very limited or even zero
for a given matrix in practice. For example, combining the result with
the Gershgorin circle theorem to locate the eigenvalues can rule out
many saddle points. In the literature, there are some
analysis of the convergence of coordinate-wise descent method, such as
\cite{Tseng2001}. However, the analysis~\cite{Tseng2001} shows the
convergence to coordinate-wise local minima but not the general local
minima.

\section{Stochastic coordinate-wise descent methods}
\label{sec:SCDM}

Greedy coordinate-wise descent method, including both GCD-Grad-LS and
GCD-LS-LS in Section~\ref{sec:GCDM}, updates a single coordinate every
iteration. These methods are beautiful from the theoretical point
of view.  In practice, these single coordinate methods are not
satisfactory on modern computer architecture. Distributed memory super
computing cluster, shared big memory machine or even personal laptop
have multi-thread parallelism enabled. In order to fully use the
computing resources, multi-coordinate updating per iteration is
desired for practical usage so that each thread can process a single
coordinate simultaneously. The direct extension of the greedy methods
to multi-coordinate version simply replaces ``coordinate-pick'' from
the most desired coordinate to the $k$ most desired coordinates; and
the ``coordinate-update'' remains the same for each picked coordinate.
However, this change leads to non-convergent iteration.
Figure~\ref{fig:gcdm-vs-scdm} (a) demonstrates the non-convergence
behavior of the greedy CDMs with $k=4$.  In this section, we propose
another natural extension of the greedy methods, i.e., stochastically
sampling multiple coordinates from certain probability distribution and
updating each coordinate accordingly.  We also provide the analysis of
the method for (locally) strongly convex problems. The local
convergence analysis of SCD-Uni-LS and GCD-Grad-LS can be viewed as
two extreme cases of our analysis. Moreover, the local convergence of
GCD-LS-LS also follows as a direct corollary.  As will be shown in the
analysis part, greedy CDMs are provably outperforms stochastic CDMs
when the initial value is in a strongly convex region near the
minimizers and single coordinate updating is adopted. If the iteration
starts from a non-convex region, as shown in
Figure~\ref{fig:gcdm-vs-scdm} (b), stochastic CDMs could converge
faster than greedy CDMs. The behavior in the figure will be discussed
in more detail at the end of this section.

\begin{figure}[htp]
    \centering
    \begin{subfigure}[t]{0.48\textwidth}
        \includegraphics[width=\textwidth]{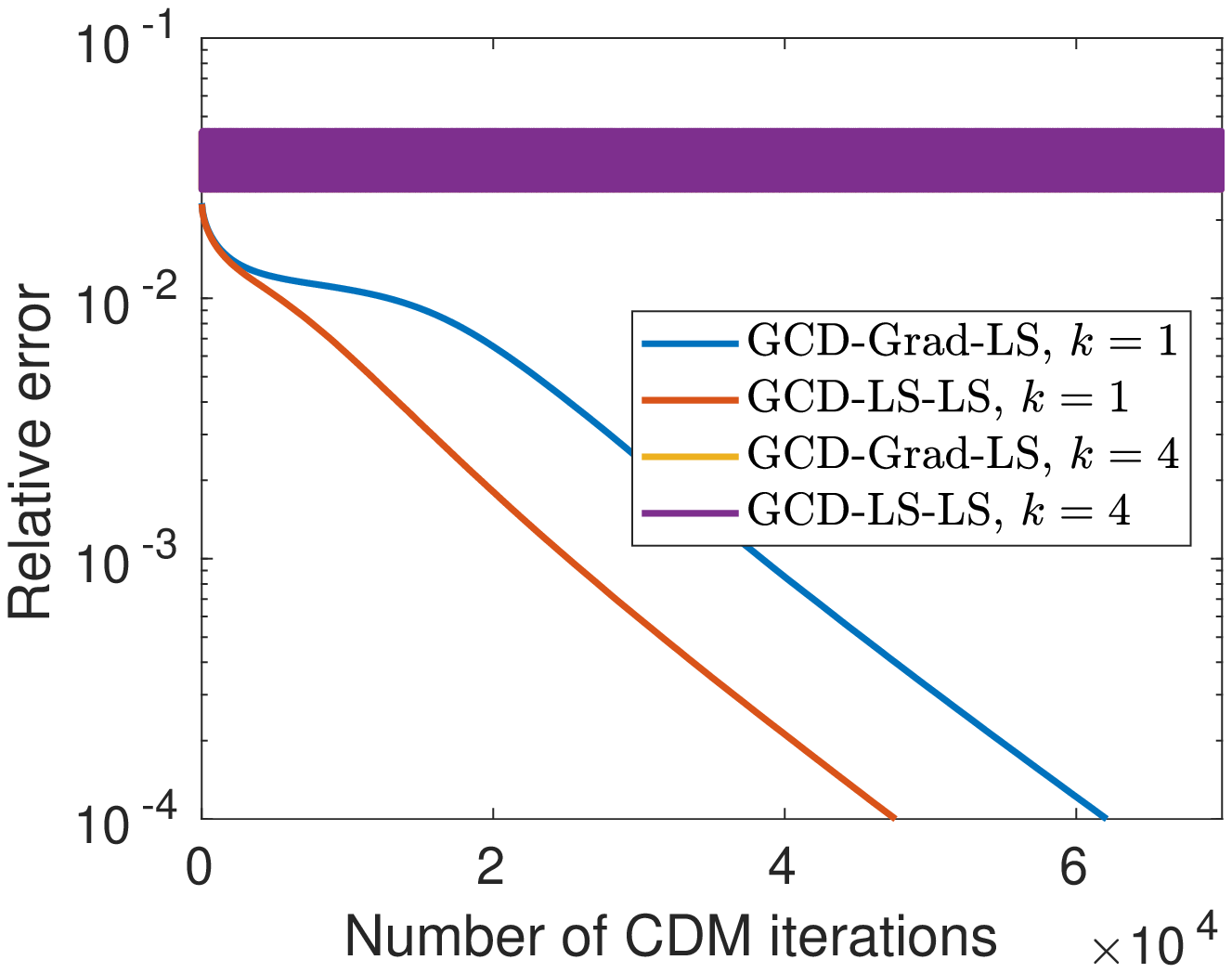}
        \caption{The non-convergence behavior of greedy CDMs on
          $A_{108}$ with $k=4$ comparing with convergent greedy CDMs
          with $k=1$. The initial vector is $e_1$.}
    \end{subfigure}
    ~
    \begin{subfigure}[t]{0.48\textwidth}
        \includegraphics[width=\textwidth]{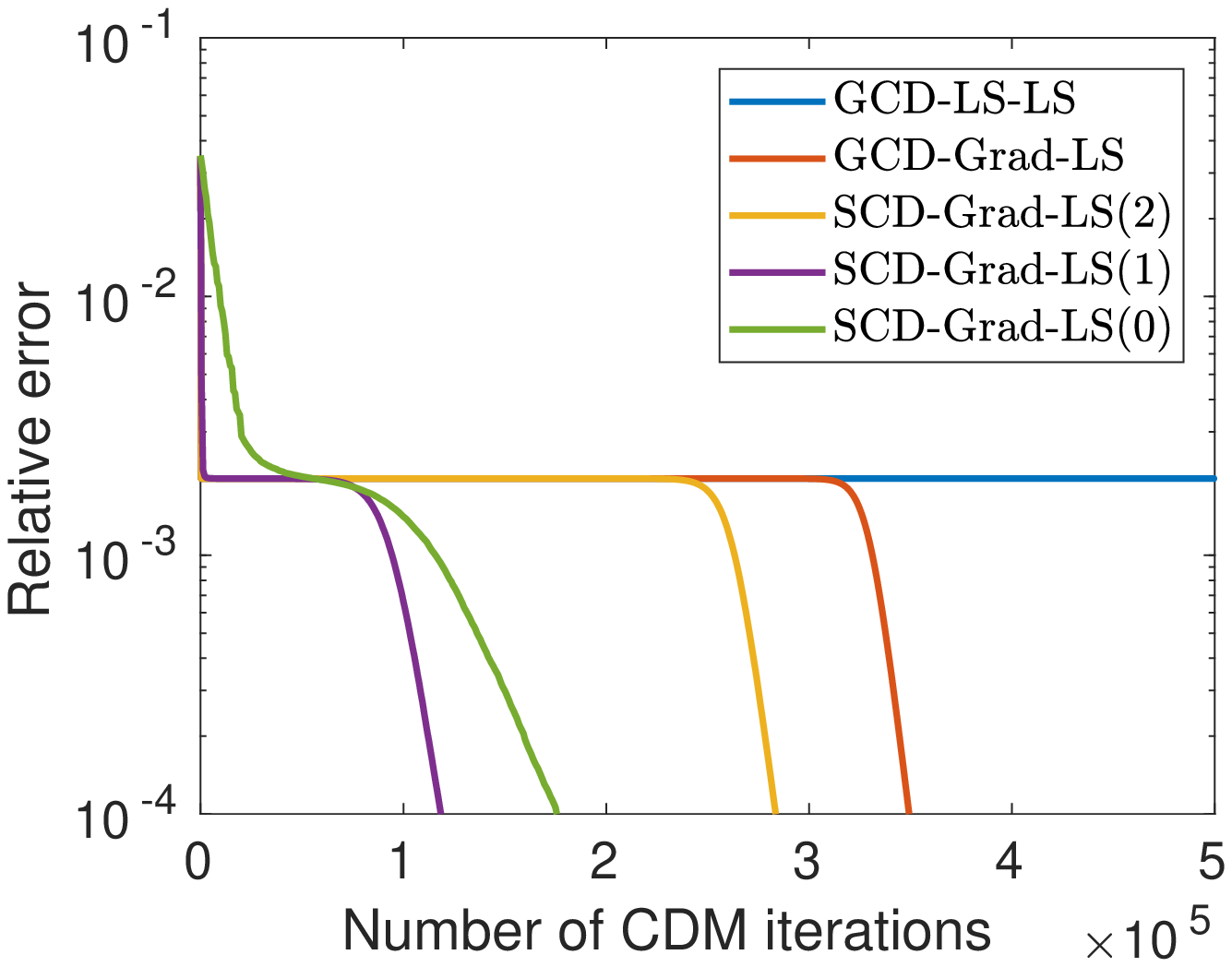}
        \caption{The convergence behavior of greedy CDMs and stochastic
        CDMs on $A_{108} - 100I$. The initial vector has $100$ non-zero
        entries on random coordinates.}
    \end{subfigure}
    \caption{Convergence behavior of greedy coordinate-wise methods vs
      stochastic coordinate-wise methods. Both (a) and (b) demonstrate
      the convergence behavior for solving \eqref{eq:LEVP-opt} and the
      matrix $A_{108}$ is a random matrix of size 5000 with largest
      eigenvalue being 108 and other eigenvalue equally distributed in
      $[1,100)$. Note that we shift the matrix in (b) as
      $A_{108} - 100 I$ so that the only saddle point is the origin.}

    \label{fig:gcdm-vs-scdm}
\end{figure}

\subsection{SCD-Grad-vecLS and SCD-Grad-LS}

\begin{algorithm}[htbp]
    \caption{SCD-Grad-vecLS for LEVP}
    \label{alg:SCD-Grad-vecLS}

    Input: Symmetric matrix $A \in \bbR^{n \times n}$; initial vector
    $\vec{x}{0}{}$; probability power $t$; number of coordinates $k$.

\begin{algorithmic}[1]

    \STATE $\vec{z}{0}{} = A\vec{x}{0}{}$

    \STATE $\ell = 0$

    \WHILE {(not converged)} 

        \STATE $\nu = \norm{\vec{x}{\ell}{}}^2$

        \FOR {$j = 1, 2, \dots, n$}

            \STATE $\vec{c}{\ell}{j} = \nu \vec{x}{\ell}{j} -
            \vec{z}{\ell}{j}$

        \ENDFOR

        \STATE Sample $k$ coordinates with probability proportional
        to $\abs{\vec{c}{\ell}{}}^t$, denote it as $\Omega$

        \STATE Solve \eqref{eq:cubicvec} with the \emph{root picking
        strategy} for $\alpha_\ell$

        \STATE $\vec{x}{\ell+1}{j} = 
        \begin{cases}
            \vec{x}{\ell}{j} + \alpha_\ell \grad_j \func{f}{
            \vec{x}{\ell}{} }, & j \in \Omega\\
            \vec{x}{\ell}{j}, & j \not\in \Omega
        \end{cases}$

        \STATE $\vec{z}{\ell+1}{} = \vec{z}{\ell}{} + A_{:,\Omega}\bigl(
        \vec{x}{\ell+1}{\Omega} - \vec{x}{\ell}{\Omega} \bigr)$

        \STATE $\ell = \ell + 1$

    \ENDWHILE
\end{algorithmic}

\end{algorithm}

The first stochastic coordinate-wise descent method we consider is
SCD-Grad-vecLS as in Algorithm~\ref{alg:SCD-Grad-vecLS}. SCD-Grad-vecLS
is a stochastic version of GCD-Grad-LS: Instead of picking
the coordinate with largest magnitude in $\vec{c}{\ell}{j}$,
we sample $k$ coordinates with probability proportional to
$\bigl\lvert\vec{c}{\ell}{j}\bigr\rvert^t$ for $t \geq 0$ and the set
of sampled indices is denoted as $\Omega$, where as in GCD-Grad-LS in
Section~\ref{sec:GCD-Grad-LS}, $\vec{c}{\ell}{}$ is proportional to the
gradient vector, $\grad f(\vec{x}{\ell}{})$. Hence the sampling strategy
here is equivalent to sampling with probability proportional to the
$t$-th power of the absolute value of the gradient vector. Through
the similar derivation as in Section~\ref{sec:GCDM}, we can show
that the exact line search can also be conducted along a given search
direction. We denote the line search objective function as $h(\alpha)
= \func{f}{ \vec{x}{\ell}{} + \alpha v } = \func{f}{ \vec{x}{\ell}{}
+ \alpha \sum_{j \in \Omega} \grad_j \func{f}{\vec{x}{\ell}{}} e_j }$,
where $v = \sum_{j \in \Omega} \grad_j \func{f}{\vec{x}{\ell}{}} e_j$
defines the search direction. Function $h(\alpha)$ is again a quartic
polynomial of $\alpha$. All candidates of optimal $\alpha$ are roots
of the cubic polynomial,
\begin{equation} \label{eq:cubicvec}
    \begin{split}
        \frac{\diff h}{\diff \alpha}(\alpha) = & 
        4 \norm{v_\Omega}^4 \alpha^3 + 12 \left( v_\Omega^\top
        \vec{x}{\ell}{\Omega} \right) \norm{v_\Omega}^2 \alpha^2 \\
        & + 4 \left( \nu \norm{v_\Omega}^2 + 2
        \left( v_\Omega^\top \vec{x}{\ell}{\Omega} \right)^2 -
        v_\Omega^\top
        A_{\Omega,\Omega} v_\Omega \right) \alpha + 4 \nu
        \left( v_\Omega^\top \vec{x}{\ell}{\Omega} \right) - 4
        v_\Omega^\top
        \vec{z}{\ell}{\Omega} = 0,\\
    \end{split}
\end{equation}
where $\nu = \norm{ \vec{x}{\ell}{} }^2 $. We notice that all
coefficients in \eqref{eq:cubicvec}, given $\nu$, can be computed
in $\bigO(k^2)$ operations for $k$ being the number of indices in
$\Omega$. Hence all candidates of the optimal $\alpha$ can be obtained
in $\bigO(k^2)$ operations. Analog to Section~\ref{sec:GCDM}, we adopt
the \emph{root picking strategy} to find the optimal $\alpha$ which
has lowest function value. We conclude that the optimal $\alpha$
is achievable in $\bigO(k^2)$ operations given pre-calculated
$\nu$. Algorithm~\ref{alg:SCD-Grad-vecLS} describes the steps in detail.

\begin{algorithm}[htb]
    \caption{SCD-Grad-LS for LEVP}
    \label{alg:SCD-Grad-LS}

    Input: Symmetric matrix $A \in \bbR^{n \times n}$; initial vector
    $\vec{x}{0}{}$; probability power $t$; number of coordinates $k$.

\begin{algorithmic}[1]

    \STATE $\vec{z}{0}{} = A\vec{x}{0}{}$
    \STATE $\ell = 0$

    \WHILE {(not converged)}
        \STATE $\nu = \norm{\vec{x}{\ell}{}}^2$
        \FOR {$j = 1, 2, \dots, n$}
            \STATE $\vec{c}{\ell}{j} = \nu \vec{x}{\ell}{j} -
            \vec{z}{\ell}{j}$
        \ENDFOR

        \STATE Sample $k$ coordinates with probability proportional
        to $\bigl\vert \vec{c}{\ell}{} \bigr\vert^t$; denote $\Omega$
        the sampled index set

        \FOR {$j \in \Omega$}
            \STATE $p_j = \nu - \bigl( \vec{x}{\ell}{j} \bigr)^2 -
            A_{j,j}$
            \STATE $q_j = A_{j,j} \vec{x}{\ell}{j} - \vec{z}{\ell}{j}$
            \STATE Solve $\alpha_j^3 + p_j \alpha_j + q_j = 0$ for real
            $\alpha_j$
        \ENDFOR
        \STATE $\vec{x}{\ell+1}{j} = \begin{cases}
            \vec{x}{\ell}{j} + \alpha_j, & j \in \Omega\\
                \vec{x}{\ell}{j}, & j \not\in \Omega
              \end{cases}$
        \STATE $\vec{z}{\ell+1}{} = \vec{z}{\ell}{} + A_{:,\Omega}\bigl(
        \vec{x}{\ell+1}{\Omega} - \vec{x}{\ell}{\Omega} \bigr)$

        \STATE $\ell = \ell + 1$
    \ENDWHILE
\end{algorithmic}

\end{algorithm}

One drawback of Algorithm~\ref{alg:SCD-Grad-vecLS} is that the exact
line search relies on all selected coordinates, which is unsuitable for
asynchronous implementation. Here we propose an aggressive SCD method,
SCD-Grad-LS, which can be implemented in an asynchronized fashion. The
method combines the coordinate picking strategy in SCD-Grad-vecLS and
coordinate updating strategy in GCD-Grad-LS. But the updating strategy in
SCD-Grad-LS updates each coordinate with the coordinate-wise exact line
search independently and can be delayed. Algorithm~\ref{alg:SCD-Grad-LS}
is the pseudo code of SCD-Grad-LS.

Updating $k > 1$ coordinates independently as in
Algorithm~\ref{alg:SCD-Grad-LS} does not have guarantee of
convergence. When $k$ is large or $t$ is large such that the same
coordinate is updated multiple times, we do observe non-convergent
behavior of the iteration in practice. However, as shown in the numerical
results, when $k$ is relatively small compared to $n$ and $t=1,2$,
SCD-Grad-LS, on average, requires about a fraction of $1/k$ number of
iterations. There exists a fix to guarantee the convergence for any
$k>1$. Instead of updating as $\vec{x}{\ell+1}{} = \vec{x}{\ell}{} +
\sum_{j \in \Omega} \alpha_j e_j$, we update as
\begin{equation}\label{eq:modupdaterule}
\vec{x}{\ell+1}{} = \vec{x}{\ell}{} + \frac{1}{k}\sum_{j \in \Omega}
\alpha_j e_j,
\end{equation}
where $\alpha_j$ is the optimal step size in $j$-th coordinate as in
\eqref{eq:cubicopt}.  Such a change enables convergence but usually
increases the iteration number by a factor of $k$ if both cases
converge.

In the sampling procedure of stochastic CDMs, there are two ways of
sampling, sampling with replacement and sampling without replacement.  We
claim that when $k \ll n$ and the variance of $\abs{\vec{c}{\ell}{}}^t$
is small, sampling with or without replacement behaves very similarly.
While, for $\abs{\vec{c}{\ell}{}}^t$ with large variance or $k \approx
n$, these two sampling strategies behave drastically differently. For
example, when $t \rightarrow \infty$, probability proportional to
$\abs{\vec{c}{\ell}{}}^t$ becomes an indicator vector on a single
coordinate (assuming non-degeneracy). Sampling with replacement results
a set $\Omega$ of $k$ same indices, whereas sampling without replacement
results a set of $k$ different indices corresponding to the largest $k$
entries in $\abs{\vec{c}{\ell}{}}$. In the analysis below, we prove the
local convergence analysis of the Algorithm~\ref{alg:SCD-Grad-vecLS}
and Algorithm~\ref{alg:SCD-Grad-LS} when $k=1$ (the two algorithms are
equivalent when $k=1$). Similar but more complicated analysis could
be done for $k\geq 1$ when sampling with replacement is adopted and
the modified updating strategy \eqref{eq:modupdaterule} is adopted in
Algorithm~\ref{alg:SCD-Grad-LS}.

\subsection{Local convergence of stochastic coordinate-wise descent
method}
\label{sec:local-conv}

In this section, we analyze the convergence properties of stochastic
coordinate-wise descent methods for SCD-Grad-LS. We will present the
analysis for a general strongly convex objective function with Lipschitz
continuous gradient. The analysis uses the following notations and
definitions. We take
\begin{equation}
  B^\pm =
  \Set{ y | \norm{ y \mp \sqrt{\lambda_1}v_1} \leq \frac{1}{30}
    \frac{\min(2\lambda_1,\lambda_1-\lambda_2)}{\sqrt{\lambda_1}} }
\end{equation}
as two neighborhoods around global minimizers
$\pm \sqrt{\lambda_1} v_1$ respectively.

\begin{definition}[Coordinate-wise Lipschitz continuous]
\label{def:Lipschitz}
    Let a function $g(x): \bbS \mapsto \bbR$ be continuously
    differentiable. The gradient of the function $\grad g(x)$ is
    coordinate-wise Lipschitz continuous on $\bbS$ if there exists
    a positive constant $L$ such that,
    \begin{equation}
        \abs{\grad_i g(x+\alpha e_i) - \grad_i g(x)} \leq L \abs{\alpha},
        \quad \forall\, x, x + \alpha e_i \in \bbS \text{ and }
        i=1,2,\dots, n.
    \end{equation}
\end{definition}

Compared to the usual Lipschitz constant of $\grad g(x)$, denoted as
$L_g$, we have the relation, $L \leq L_g$. If $g(x)$ is further assumed
to be convex, then we have $L_g \leq nL$. If $g(x)$ is
twice-differentiable, Definition~\ref{def:Lipschitz} is equivalent to
$\abs{e_i^\top \grad^2 g(x) e_i} \leq L$ for all $i = 1, 2, \dots, n$ and
$x \in \bbS$. An important consequence of a coordinate-wise Lipschitz
continuous function $g(x)$ is that
\begin{equation} \label{eq:Lip-bound}
    g(x+\alpha e_i) \leq g(x) + \grad_{i} g(x) \alpha +
    \frac{L}{2}\alpha^2, \quad \forall\, x \in \bbS \text{ and } x +
    \alpha e_i \in \bbS.
\end{equation}

The following lemma extends \cite[Lemma A.3]{Lei2016} to objective
function $f(x)$ with matrix $A$ satisfying Assumption~\ref{ass:matA}.
\begin{lemma} \label{lem:Lip-bound}
    Function $f(x)$ defined in \eqref{eq:LEVP-opt} is a continuously
    differentiable function. Further the gradient function $\grad
    f(x)$ is coordinate-wise Lipschitz continuous on either $B^+$
    or $B^-$ with constant $L = 12\lambda_1 + 2 \min(2\lambda_1,
    \lambda_1-\lambda_2) + 4\max_i
    \abs{A_{i,i}}$.
\end{lemma}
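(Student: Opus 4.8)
The plan is to compute the diagonal entries of the Hessian $\grad^2 f(x)$ explicitly and then bound them uniformly over $B^+$ (the argument for $B^-$ is identical by symmetry, since $f(-x)=f(x)$). From \eqref{eq:min-lambda} we already have $\grad^2 f(x) = -4A + 8xx^\top + 4\norm{x}^2 I$, so that
\begin{equation*}
    e_i^\top \grad^2 f(x) e_i = -4A_{i,i} + 8 x_i^2 + 4\norm{x}^2 .
\end{equation*}
By the remark following Definition~\ref{def:Lipschitz}, it suffices to show $\abs{e_i^\top \grad^2 f(x) e_i} \le L$ for all $i$ and all $x \in B^+$, with $L$ as claimed. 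Since $f$ is a polynomial, continuous differentiability is immediate, so the only real content is this uniform bound.

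First I would establish an a priori bound on $\norm{x}$ for $x \in B^+$. Writing $r := \tfrac{1}{30}\,\tfrac{\min(2\lambda_1,\lambda_1-\lambda_2)}{\sqrt{\lambda_1}}$, the triangle inequality gives $\norm{x} \le \sqrt{\lambda_1} + r$, hence
\begin{equation*}
    \norm{x}^2 \le \lambda_1 + 2\sqrt{\lambda_1}\,r + r^2 .
\end{equation*}
Because $r \le \tfrac{1}{30}\sqrt{\lambda_1}$ (using $\min(2\lambda_1,\lambda_1-\lambda_2)\le \lambda_1$), we get $2\sqrt{\lambda_1}\,r \le \tfrac{1}{15}\lambda_1$ and $r^2 \le \tfrac{1}{900}\lambda_1$, so $\norm{x}^2 \le \tfrac{3}{2}\lambda_1$ comfortably; in fact one can be sharper and absorb the slack into the $\min(2\lambda_1,\lambda_1-\lambda_2)$ term. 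Also $x_i^2 \le \norm{x}^2$, so $8x_i^2 \le 8\norm{x}^2$.

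Then I would just assemble the pieces: $\abs{-4A_{i,i}} \le 4\max_i\abs{A_{i,i}}$, and $\abs{8x_i^2 + 4\norm{x}^2} \le 12\norm{x}^2 \le 12\bigl(\lambda_1 + 2\sqrt{\lambda_1}\,r + r^2\bigr)$. Expanding and bounding $2\sqrt{\lambda_1}\,r + r^2 \le \tfrac{1}{6}\min(2\lambda_1,\lambda_1-\lambda_2)$ — which follows from the explicit form of $r$ after a short estimate — yields $12\norm{x}^2 \le 12\lambda_1 + 2\min(2\lambda_1,\lambda_1-\lambda_2)$, and adding the $A_{i,i}$ contribution gives exactly $L = 12\lambda_1 + 2\min(2\lambda_1,\lambda_1-\lambda_2) + 4\max_i\abs{A_{i,i}}$. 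The main (and only) obstacle is purely bookkeeping: making sure the constant $\tfrac{1}{30}$ in the definition of $B^\pm$ is small enough that the cross term $2\sqrt{\lambda_1}\,r$ and the quadratic term $r^2$ together fit inside the allotted $2\min(2\lambda_1,\lambda_1-\lambda_2)$ slack, i.e. checking the arithmetic inequality $12(2\sqrt{\lambda_1}\,r + r^2) \le 2\min(2\lambda_1,\lambda_1-\lambda_2)$ with $r$ as above; this reduces to a concrete numerical inequality that holds with room to spare. No delicate estimate or structural argument beyond this is needed.
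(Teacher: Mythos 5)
Your proof is correct, and it supplies exactly the argument the paper omits: the paper states this lemma only as an extension of Lemma~A.3 of \citet{Lei2016} without writing out a proof, and your route --- bound the diagonal Hessian entries $-4A_{i,i}+8x_i^2+4\norm{x}^2$ uniformly on $B^\pm$ via $\norm{x}\le\sqrt{\lambda_1}+r$ with $r=\frac{1}{30}\min(2\lambda_1,\lambda_1-\lambda_2)/\sqrt{\lambda_1}$ --- is the natural (and presumably the intended) one, with the key bookkeeping inequality indeed holding since $12\bigl(2\sqrt{\lambda_1}\,r+r^2\bigr)\le\frac{62}{75}\min(2\lambda_1,\lambda_1-\lambda_2)\le 2\min(2\lambda_1,\lambda_1-\lambda_2)$. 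One small fix: your parenthetical claim $\min(2\lambda_1,\lambda_1-\lambda_2)\le\lambda_1$ can fail when $\lambda_2<0$; replace it by $\min(2\lambda_1,\lambda_1-\lambda_2)\le 2\lambda_1$, which is all your final estimate uses (it gives $r\le\sqrt{\lambda_1}/15$ and $r^2\le\min(2\lambda_1,\lambda_1-\lambda_2)/450$), so the conclusion is unaffected.
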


\begin{definition}[\strongcvxity{p}] \label{def:strongly-convex}
    Let a function $g(x): \bbS \mapsto \bbR$ be continuously
    differentiable. It is said to be \strongcvx{p}, if there exists a
    constant $\mu_p > 0$ such that
    \begin{equation} \label{eq:strongly-convex}
        g(y) \geq g(x) + \grad g(x)^\top (y-x) + \frac{\mu_p}{2}
        \norm{y-x}_{p}^2, \quad \forall x, y \in \bbS.
    \end{equation} 
\end{definition}
Throughout the paper, we assume $p \geq 1$ in the definition.
Definition~\ref{def:strongly-convex} is a generalized version of
the traditional strong convexity, which corresponds to $p=2$ case.

Combine \eqref{eq:strongly-convex} with the equivalence of different
norms in finite dimensional vector space, we obtain for any $p \geq q
\geq 1$
\begin{equation}
    \begin{split}
        g(y) & \geq  g(x) + \grad g(x)^\top (y-x) + \frac{\mu_p}{2}
        \norm{y-x}_{p}^2 \\ 
        & \geq  g(x) + \grad g(x)^\top (y-x) + n^{\nicefrac{2}{p}
        - \nicefrac{2}{q}}\frac{\mu_p}{2} \norm{y-x}_{q}^2, \quad
        \forall x, y \in \bbS.
\end{split}
\end{equation}
Therefore, if $g(x)$ is a \strongcvx{p} function with constant $\mu_p$,
then $g(x)$ is a \strongcvx{q} function with constant $\mu_{q}
\geq n^{\nicefrac{2}{p} - \nicefrac{2}{q}}\mu_p$,
which is equivalent to $n^{\nicefrac{2}{q}}\mu_{q} \geq
n^{\nicefrac{2}{p}}\mu_p$.  On the other hand side, using the
inequality of vector norm together with \eqref{eq:strongly-convex},
we obtain for any $p \geq q \geq 1$,
\begin{equation}
    \begin{split}
        g(y) &\geq  g(x) + \grad g(x)^\top (y-x) + \frac{\mu_{q}}{2}
        \norm{y-x}_{q}^2 \\
        & \geq  g(x) + \grad g(x)^\top (y-x) + \frac{\mu_{q}}{2}
        \norm{y-x}_{p}^2, \quad \forall x, y \in \bbS.
\end{split}
\end{equation}
Therefore, if $g(x)$ is a \strongcvx{q} function with constant
$\mu_{q}$, then $g(x)$ is a \strongcvx{p} function with constant
$\mu_{p} \geq \mu_{q}$. Putting two parts together, we have the
following inequalities of $\mu_p$ and $\mu_q$
\begin{equation} \label{eq:mu-ineq}
    n^{\nicefrac{2}{p} - \nicefrac{2}{q}} \mu_p \leq \mu_q
    \leq \mu_p,
\end{equation}
where $p \geq q \geq 1$.

\begin{lemma} \label{lem:smooth}
    Function $f(x)$ defined in \eqref{eq:LEVP-opt} is
    \strongcvx{2} on either $B^+$ or $B^-$ with constant $\mu_2 =
    3 \min(2\lambda_1,\lambda_1 - \lambda_2)$, and hence, there exists
    $\mu_p$ such that $f(x)$ is \strongcvx{p} for any $p \geq 1$.
\end{lemma}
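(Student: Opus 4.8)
The plan is to show that on $B^+$ (the argument for $B^-$ is symmetric), the Hessian $\grad^2 f(x) = -4A + 8xx^\top + 4\norm{x}^2 I$ satisfies a uniform lower bound $\lambda_{\min}(\grad^2 f(x)) \geq \mu_2 = 3\min(2\lambda_1,\lambda_1-\lambda_2)$, which by the standard second-order characterization of strong convexity gives \eqref{eq:strongly-convex} with $p=2$; the existence of $\mu_p$ for general $p \geq 1$ then follows immediately from the norm-equivalence inequality \eqref{eq:mu-ineq} derived just above the lemma.

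First I would recall that at the exact minimizer $x^* = \sqrt{\lambda_1}v_1$, one computes $\grad^2 f(x^*) = -4A + 8\lambda_1 v_1v_1^\top + 4\lambda_1 I$. Decomposing in the eigenbasis of $A$: along $v_1$ the Hessian acts as $-4\lambda_1 + 8\lambda_1 + 4\lambda_1 = 8\lambda_1$, and along $v_i$ ($i \geq 2$) it acts as $-4\lambda_i + 4\lambda_1 = 4(\lambda_1 - \lambda_i) \geq 4(\lambda_1-\lambda_2)$. Hence $\lambda_{\min}(\grad^2 f(x^*)) \geq \min(8\lambda_1, 4(\lambda_1-\lambda_2)) \geq 4\min(2\lambda_1,\lambda_1-\lambda_2)$. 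The second step is a perturbation argument: for $x \in B^+$, write $\grad^2 f(x) - \grad^2 f(x^*) = 8(xx^\top - x^*{x^*}^\top) + 4(\norm{x}^2 - \norm{x^*}^2)I$, and bound its operator norm using $\norm{x - x^*} \leq \tfrac{1}{30}\,\frac{\min(2\lambda_1,\lambda_1-\lambda_2)}{\sqrt{\lambda_1}}$. Since $\norm{x}, \norm{x^*} = \sqrt{\lambda_1}$ up to the small radius, each term is $O\bigl(\sqrt{\lambda_1}\cdot \norm{x-x^*}\bigr) = O\bigl(\min(2\lambda_1,\lambda_1-\lambda_2)\bigr)$ with a small constant; tracking the constants, the total perturbation is at most $\min(2\lambda_1,\lambda_1-\lambda_2)$, so $\lambda_{\min}(\grad^2 f(x)) \geq 4\min(2\lambda_1,\lambda_1-\lambda_2) - \min(2\lambda_1,\lambda_1-\lambda_2) = 3\min(2\lambda_1,\lambda_1-\lambda_2) = \mu_2$. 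Finally, integrating this Hessian bound twice along the segment from $x$ to $y$ (which stays in the convex set $B^+$) yields \eqref{eq:strongly-convex}.

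The main obstacle is purely bookkeeping: making the radius $\tfrac{1}{30}\,\frac{\min(2\lambda_1,\lambda_1-\lambda_2)}{\sqrt{\lambda_1}}$ tight enough that the perturbation of the Hessian is controlled by exactly $\min(2\lambda_1,\lambda_1-\lambda_2)$, so that $4 - 1 = 3$ comes out cleanly. Concretely, one must carefully estimate $\norm{xx^\top - x^*{x^*}^\top}_2 \leq \norm{x-x^*}(\norm{x}+\norm{x^*})$ and $\abs{\norm{x}^2-\norm{x^*}^2} \leq \norm{x-x^*}(\norm{x}+\norm{x^*})$, bound $\norm{x} \leq \sqrt{\lambda_1} + \tfrac{1}{30}\frac{\min(\cdots)}{\sqrt{\lambda_1}} \leq \tfrac{31}{30}\sqrt{\lambda_1}$ (using $\min(2\lambda_1,\lambda_1-\lambda_2) \leq 2\lambda_1$), and then check that $8 \cdot \tfrac{1}{30}\frac{\min(\cdots)}{\sqrt{\lambda_1}} \cdot \tfrac{61}{30}\sqrt{\lambda_1} + 4 \cdot \tfrac{1}{30}\frac{\min(\cdots)}{\sqrt{\lambda_1}} \cdot \tfrac{61}{30}\sqrt{\lambda_1} = 12 \cdot \tfrac{61}{900}\min(\cdots) \leq \min(\cdots)$, which indeed holds since $\tfrac{732}{900} < 1$. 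This is the kind of routine-but-delicate constant chase that the $\tfrac{1}{30}$ in the definition of $B^\pm$ was chosen to make work, so I would present the structure and the key inequalities and leave the final arithmetic verification to the reader.
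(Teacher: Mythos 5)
Your argument is correct and is essentially the paper's own proof: the paper establishes the strong $\norm{\cdot}_2$-convexity via the same Hessian lower bound near $\pm\sqrt{\lambda_1}v_1$ (deferring the details to an extension of Lemma A.2 of \citet{Lei2016}, where the minimal Hessian eigenvalue is adjusted to Assumption~\ref{ass:matA}) and then obtains $\mu_p$ for general $p\geq 1$ from \eqref{eq:mu-ineq}, exactly as you do. One tiny bookkeeping slip: since $\min(2\lambda_1,\lambda_1-\lambda_2)\leq 2\lambda_1$, the radius gives $\norm{x}\leq\tfrac{32}{30}\sqrt{\lambda_1}$ (not $\tfrac{31}{30}$), so the total Hessian perturbation is bounded by $\tfrac{744}{900}\min(2\lambda_1,\lambda_1-\lambda_2)$ rather than $\tfrac{732}{900}$, which is still below $\min(2\lambda_1,\lambda_1-\lambda_2)$, so your conclusion $\lambda_{\min}\bigl(\grad^2 f(x)\bigr)\geq 3\min(2\lambda_1,\lambda_1-\lambda_2)$ stands.
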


The proof of the \strongcvxity{2} in Lemma~\ref{lem:smooth} follows
an extension of the proof of Lemma A.2 in \cite{Lei2016}, where the
minimum eigenvalue of the Hessian matrix is modified according to
the assumption of the matrix $A$. Combining with \eqref{eq:mu-ineq},
we have the existence of $\mu_p$ for all $p \geq 1$.

$B^+$ and $B^-$ are two disjoint 2-norm ball around global
minimizers. Next we define two sublevel sets $D^+$ and $D^-$,
contained in $B^+$ and $B^-$ respectively as 
\begin{equation}\label{eq:defD}
    D^\pm = \Set{ x \in B^\pm | f(x) \leq \min_{y \in \partial B^\pm}
    f(y) },
\end{equation}
where $\partial B^\pm$ denote the boundary of $B^\pm$.  Obviously,
two global minimizers lie in $D^\pm$ respectively, i.e., $\pm
\sqrt{\lambda_1}v_1 \in D^\pm$.  Lemma~\ref{lem:mono-decay}
shows monotonic decay property of the iteration defined by
Algorithm~\ref{alg:SCD-Grad-LS} once the iterations falls in $D^\pm$.
It also shows that $D^+ \cup D^-$ is a contraction set for the
iteration.

\begin{lemma} \label{lem:mono-decay}
    Consider function $f(x)$ as defined in \eqref{eq:LEVP-opt} and the
    iteration follows Algorithm~\ref{alg:SCD-Grad-LS} with $k=1$. For
    any $\vec{x}{\ell}{} \in D^+ \cup D^-$, we have
    \begin{equation} \label{eq:mono-decay}
        \func{f}{\vec{x}{\ell+1}{}} \leq \func{f}{ \vec{x}{\ell}{}
        } - \frac{1}{2L}\left( \grad_{j_\ell} \func{f}{ \vec{x}{\ell}{} }
        \right)^2,
    \end{equation}
    where $j_\ell$ is the index of the coordinate being picked. Moreover,
    we have  $\vec{x}{\ell+1}{} \in D^+ \cup D^-$.
\end{lemma}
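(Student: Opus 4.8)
The plan is to observe first that, for $k=1$, Algorithm~\ref{alg:SCD-Grad-LS} is exact line search along the single picked coordinate $j_\ell$: with $h(\alpha):=f(\vec{x}{\ell}{}+\alpha e_{j_\ell})$, a quartic with leading coefficient $1$, the update is $\vec{x}{\ell+1}{}=\vec{x}{\ell}{}+\alpha^\star e_{j_\ell}$, $\alpha^\star=\argmin_{\alpha\in\bbR}h(\alpha)$, so $f(\vec{x}{\ell+1}{})=\min_\alpha h(\alpha)\le h(0)=f(\vec{x}{\ell}{})$. Since $f$ is even I may assume $\vec{x}{\ell}{}\in D^+$; \eqref{eq:mono-decay} is trivial when $\grad_{j_\ell}f(\vec{x}{\ell}{})=0$, so I also assume $h'(0)=\grad_{j_\ell}f(\vec{x}{\ell}{})<0$ (otherwise reflect $\alpha\mapsto-\alpha$).

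For the quantitative decrease I would run the textbook coordinate-descent estimate, the only delicate point being that \eqref{eq:Lip-bound} holds only on $B^+$. Let $\alpha_+>0$ be the smallest positive root of the cubic $h'$ (it exists since $h'$ has positive leading coefficient and $h'(0)<0$). I would first show the segment $\{\vec{x}{\ell}{}+\alpha e_{j_\ell}:\alpha\in[0,\alpha_+]\}$ stays in $B^+$: $h$ is strictly decreasing on $[0,\alpha_+]$, so if the segment met $\partial B^+$ at some $\alpha_b<\alpha_+$, then $f(\vec{x}{\ell}{}+\alpha_b e_{j_\ell})<f(\vec{x}{\ell}{})\le\min_{y\in\partial B^+}f(y)$, contradicting $\vec{x}{\ell}{}\in D^+$ and $\vec{x}{\ell}{}+\alpha_b e_{j_\ell}\in\partial B^+$. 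On that segment Lemma~\ref{lem:Lip-bound} gives $h''(\alpha)\le L$ and Lemma~\ref{lem:smooth} gives $h''(\alpha)\ge\mu_2>0$, so $h'$ is increasing and $0=h'(\alpha_+)\le h'(0)+L\alpha_+$, i.e.\ $\alpha_+\ge -h'(0)/L=:\alpha^\dagger\in(0,\alpha_+]$. Hence $\vec{x}{\ell}{}+\alpha^\dagger e_{j_\ell}\in B^+$, so \eqref{eq:Lip-bound} applies at $\alpha^\dagger$ and
\begin{equation*}
  f(\vec{x}{\ell+1}{})=\min_\alpha h(\alpha)\le h(\alpha^\dagger)\le h(0)+h'(0)\alpha^\dagger+\frac{L}{2}(\alpha^\dagger)^2=h(0)-\frac{h'(0)^2}{2L},
\end{equation*}
which is \eqref{eq:mono-decay}.

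For the invariance I would prove the stronger identity $D^+\cup D^-=\Set{ x | f(x)\le m }$, where $m:=\min_{y\in\partial B^+}f(y)=\min_{y\in\partial B^-}f(y)$; granting it, \eqref{eq:mono-decay} gives $f(\vec{x}{\ell+1}{})\le f(\vec{x}{\ell}{})\le m$, i.e.\ $\vec{x}{\ell+1}{}\in D^+\cup D^-$. The set $\Set{ x | f(x)\le m }$ is compact since $f$ is coercive. Using the explicit radius of $B^\pm$ one checks $m$ lies strictly below every saddle value of $f$ — namely $f(0)=\norm{A}_F^2$ and $f(\sqrt{\lambda}v)=\norm{A}_F^2-\lambda^2$ for positive eigenvalues $\lambda<\lambda_1$ (Lemma~\ref{lem:strict-saddle-point}) — so by Lemmas~\ref{lem:stationary-point}--\ref{lem:strict-saddle-point} the sublevel set contains no stationary point other than $\pm\sqrt{\lambda_1}v_1$; its minimizer over any connected component is then a local minimizer of $f$, forcing each component to contain one of $\pm\sqrt{\lambda_1}v_1$ and hence the set to have exactly two components. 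Finally each component lies in the matching ball: if the component $C^+$ of $\sqrt{\lambda_1}v_1$ escaped $B^+$ it would meet $\partial B^+$ at a non-stationary $p$ with $f(p)=m$; since $p$ minimizes $f|_{\partial B^+}$, $\grad f(p)=\eta\,(p-\sqrt{\lambda_1}v_1)$ with $\eta\ne0$; $\eta>0$ is impossible ($f>m$ just outside $B^+$ near $p$), while $\eta<0$, combined with $w^\top\grad^2 f(\sqrt{\lambda_1}v_1)w\ge 4\min(2\lambda_1,\lambda_1-\lambda_2)$ along the radial direction $w$ through $p$, forces $\abs{(\sqrt{\lambda_1}v_1)^\top w}$ to exceed $\sqrt{\lambda_1}$ — a Cauchy--Schwarz violation that goes through precisely because the radius carries the factor $1/30$. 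Hence $C^\pm=D^\pm$.

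The hardest part is this last paragraph: certifying that the low sublevel set of $f$ is exactly $D^+\cup D^-$, equivalently that a single coordinate line search cannot drift the iterate out of $B^+\cup B^-$; this is what calibrates the constant $1/30$ in the definition of $B^\pm$ and is where the geometry of the quartic $f$ around its two minimizers enters. Everything else — the exact-line-search decrease bound and the reductions — is routine once the iterate is confined to $B^\pm$, and the same confinement argument also underlies the local-convergence corollaries for GCD-Grad-LS and GCD-LS-LS.
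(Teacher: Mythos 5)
Your argument for the decrease bound \eqref{eq:mono-decay} is essentially the paper's: both proofs reduce to showing that the gradient-step point $\vec{x}{\ell}{} - \frac{1}{L}\grad_{j_\ell}\func{f}{\vec{x}{\ell}{}}e_{j_\ell}$ lies in the region where the coordinate-wise Lipschitz bound \eqref{eq:Lip-bound} applies, and then using that the exact line search does at least as well as that point. The paper does this by applying the mean value theorem twice to the restricted quartic (locating the interval $[a,b]$ of the sublevel set and the first critical point $c$, and showing $a - p'(a)/L \in (a,c)$); your version via the smallest positive root $\alpha_+$ of $h'$ and the bound $h'(\alpha_+)-h'(0)\le L\alpha_+$ is the same computation, and your explicit check that the segment $[0,\alpha_+]$ cannot cross $\partial B^+$ (else $f$ would drop below $\min_{\partial B^+}f$ on the boundary) is a point the paper leaves implicit, so you are if anything more careful there. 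The real divergence is the invariance claim $\vec{x}{\ell+1}{}\in D^+\cup D^-$: the paper dispatches it in one sentence by saying that otherwise there would be more than two minimizers, contradicting Theorem~\ref{thm:global-minimizer}, whereas you correctly recognize that this soft argument only shows the connected component of $\{x \mid f(x)\le m\}$ containing the iterate must contain $\pm\sqrt{\lambda_1}v_1$, and that one still has to rule out such a component leaking outside $B^\pm$ through a point of $\partial B^\pm$ where $f$ attains the value $m$. Your proposed fix --- verifying that $m$ sits strictly below every saddle value and then a Lagrange-multiplier/Cauchy--Schwarz argument at the boundary minimizer, both calibrated by the factor $1/30$ in the radius of $B^\pm$ --- is the right shape of argument and is exactly the step the paper's proof glosses over; but as written it is still a sketch (``one checks'') rather than a completed estimate, so if you want this part to be airtight you should actually carry out the two quantitative verifications, since they are where the specific constants $1/30$ and $\mu_2=3\min(2\lambda_1,\lambda_1-\lambda_2)$ earn their keep.
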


The proof of Lemma~\ref{lem:mono-decay} can be found in
Appendix~\ref{app:local-conv}.

In Algorithm~\ref{alg:SCD-Grad-LS}, we notice that the iteration of
$\vec{x}{\ell}{}$ in the SCD-Grad-LS samples coordinate $j$ with
probability proportional to
$\abs{\grad_j f\left( \vec{x}{\ell}{} \right)}^t$ for some
non-negative power $t$.  In the following lemma and theorem, we adopt
notation $f^*$ as the minimum of the function and $X^*$ be the set of
minimizers, i.e., $X^* = \Set{\pm \sqrt{\lambda_1}v_1}$. A distance
function between two sets or between a point and a set is defined as,
$\dist{S_1}{S_2} = \min_{x \in S_1, y \in S_2} \norm{x-y}$.

\begin{lemma} \label{lem:conv}
    Consider function $f(x)$ as defined in \eqref{eq:LEVP-opt} and the
    iteration follows Algorithm~\ref{alg:SCD-Grad-LS} with $k=1$. For
    any $\vec{x}{\ell}{} \in D^+ \cup D^-$,
    \begin{equation} \label{eq:local-conv}
        \expect{ f\bigl( \vec{x}{\ell+1}{} \bigr) \mid \vec{x}{\ell}{}
        } - f^* \leq \left( 1-\frac{\mu_q}{L n^{2-\frac{2}{q}}}
        \right) \left( f\bigl( \vec{x}{\ell}{} \bigr) - f^*\right),
    \end{equation}
    where $q = \frac{t+2}{t+1}$.
\end{lemma}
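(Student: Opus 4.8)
The plan is to combine the per-step decrease in Lemma~\ref{lem:mono-decay} with the strong $\norm{\cdot}_q$-convexity of $f$ on $D^\pm$ (Lemma~\ref{lem:smooth}) and then take conditional expectation over the random choice of coordinate. First I would note that by Lemma~\ref{lem:mono-decay}, since $\vec{x}{\ell}{} \in D^+ \cup D^-$ we have $\vec{x}{\ell+1}{} \in D^+ \cup D^-$ as well and $\func{f}{\vec{x}{\ell+1}{}} \leq \func{f}{\vec{x}{\ell}{}} - \frac{1}{2L}(\grad_{j_\ell}\func{f}{\vec{x}{\ell}{}})^2$ whatever coordinate $j_\ell$ is picked. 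Taking the conditional expectation over $j_\ell$, which is sampled with probability $\pi_j = \abs{\grad_j \func{f}{\vec{x}{\ell}{}}}^t / \sum_i \abs{\grad_i \func{f}{\vec{x}{\ell}{}}}^t$, gives
\begin{equation*}
    \expect{\func{f}{\vec{x}{\ell+1}{}} \mid \vec{x}{\ell}{}} - f^*
    \leq \func{f}{\vec{x}{\ell}{}} - f^* - \frac{1}{2L}
    \frac{\sum_j \abs{\grad_j \func{f}{\vec{x}{\ell}{}}}^{t+2}}
         {\sum_j \abs{\grad_j \func{f}{\vec{x}{\ell}{}}}^{t}}.
\end{equation*}
The core of the argument is to lower bound the ratio $\sum_j |g_j|^{t+2} / \sum_j |g_j|^t$, where $g = \grad \func{f}{\vec{x}{\ell}{}}$, by a constant multiple of $\norm{g}_q^2$ with $q = \frac{t+2}{t+1}$. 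This is exactly a H\"older / power-mean inequality: writing $a_j = |g_j|^t$ and $b_j = |g_j|^2$, one has $\sum_j |g_j|^{t+2} = \sum_j a_j b_j$, and applying H\"older with conjugate exponents $\frac{t+2}{t}$ and $\frac{t+2}{2}$ (or directly the power-mean inequality relating $\ell^q$ and $\ell^{t+2}$ norms of $g$) yields $\sum_j |g_j|^{t+2} \geq \bigl(\sum_j |g_j|^q\bigr)^{(t+2)/q} \big/ \bigl(\sum_j|g_j|^t\bigr)^{\text{something}}$ — more cleanly, $\frac{\sum_j |g_j|^{t+2}}{\sum_j |g_j|^t} \geq \bigl(\frac{\sum_j |g_j|^q}{\text{? }}\bigr)$ after checking that $q$ lies between $t$ and $t+2$ and that $q = \frac{t+2}{t+1}$ is precisely the exponent making the inequality $\norm{g}_{t+2}^{t+2} \cdot \norm{g}_t^{?} \geq \norm{g}_q^{?}$ dimensionally correct. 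I would verify this by the substitution that forces equality conditions to match, i.e. that the weights $|g_j|^t / \sum_i |g_i|^t$ form a probability vector and $\expect{|g_j|^2}$ under this measure is a power mean; the exponent bookkeeping is the one routine-but-delicate computation, and I expect it to land on $\sum_j |g_j|^{t+2}/\sum_j |g_j|^t \geq \norm{g}_q^2 / n^{?}$ — but in fact no extra $n$ factor should appear here, the dimension factor enters only through $\mu_q$.

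Next I would invoke strong $\norm{\cdot}_q$-convexity of $f$ on $D^\pm$: by Definition~\ref{def:strongly-convex} applied with $x = \vec{x}{\ell}{}$ and minimizing the right-hand side over $y$ (the Polyak--\L ojasiewicz-type consequence of strong convexity in the $q$-norm), one gets $f^* \geq \func{f}{\vec{x}{\ell}{}} - \frac{1}{2\mu_q}\norm{\grad \func{f}{\vec{x}{\ell}{}}}_{q^*}^2$ where $q^*$ is the dual exponent. The subtle point is matching norms: the PL inequality from $\norm{\cdot}_q$-strong convexity naturally produces the \emph{dual} norm $\norm{\cdot}_{q^*}$ of the gradient, with $\frac{1}{q} + \frac{1}{q^*} = 1$, so $q^* = \frac{t+2}{1}$... let me recheck: $q = \frac{t+2}{t+1}$ gives $q^* = t+2$. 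So strong convexity yields $\func{f}{\vec{x}{\ell}{}} - f^* \leq \frac{1}{2\mu_q} \norm{g}_{t+2}^2$, which is the \emph{wrong direction} for a clean bound. Instead I would use the other consequence of strong convexity directly: for twice-differentiable $f$, $\norm{\cdot}_q$-strong convexity with constant $\mu_q$ gives $g^\top(y - x) + \frac{\mu_q}{2}\norm{y-x}_q^2 \leq f(y) - f(x)$, and the companion bound $f(x) - f^* \leq \frac{1}{2\mu_q}\norm{g}_{q}^{*\,2}$ holds where $\norm{\cdot}_q^*$ here should be read as the quantity dual to the $\norm{\cdot}_q^2$ regularizer — the paper will have established this. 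Granting that, and using $\norm{g}_q^2 \geq$ (the ratio bound above), we conclude
\begin{equation*}
    \expect{\func{f}{\vec{x}{\ell+1}{}} \mid \vec{x}{\ell}{}} - f^*
    \leq \func{f}{\vec{x}{\ell}{}} - f^* - \frac{1}{2L}\norm{g}_q^2
    \leq \Bigl(1 - \frac{\mu_q}{L}\Bigr)\bigl(\func{f}{\vec{x}{\ell}{}} - f^*\bigr),
\end{equation*}
and the stated $n^{2 - 2/q}$ factor in the denominator must come from relating $\mu_q$ back to a dimension-independent constant via \eqref{eq:mu-ineq}, or equivalently from the norm-equivalence loss $\norm{g}_q^2 \geq n^{-(2-2/q)}\norm{g}_2^2$ if the PL bound is stated in the $2$-norm; I would trace which convention the paper uses and insert the factor accordingly.

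The main obstacle, as flagged above, is the bookkeeping of exponents and norms: pinning down that $q = \frac{t+2}{t+1}$ is exactly the exponent for which the power-mean inequality turns $\sum_j |g_j|^{t+2}/\sum_j |g_j|^t$ into $\norm{g}_q^2$ (up to the dimension factor), and then making sure the dual-norm version of the strong-convexity/PL inequality is applied with matching exponents so that the $\frac{1}{2L}$ and $\frac{1}{2\mu_q}$ combine into the factor $1 - \frac{\mu_q}{Ln^{2-2/q}}$. Everything else — the monotone-decrease lemma, the invariance of $D^+ \cup D^-$, and the telescoping to get geometric convergence of $\expect{f(\vec{x}{\ell}{})} - f^*$ — is either already proved (Lemma~\ref{lem:mono-decay}, Lemma~\ref{lem:smooth}) or a one-line iteration of \eqref{eq:local-conv}. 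I would present the proof in the order: (1) conditional expectation of the Lemma~\ref{lem:mono-decay} bound; (2) the power-mean/H\"older lower bound on the gradient-ratio in terms of $\norm{g}_q^2$; (3) the strong-$\norm{\cdot}_q$-convexity PL bound $\func{f}{\vec{x}{\ell}{}} - f^* \leq \frac{1}{2\mu_q}(\text{appropriate gradient norm})^2$; (4) substitution and identification of the contraction factor, inserting the $n^{2-2/q}$ via \eqref{eq:mu-ineq} as needed.
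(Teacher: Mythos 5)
Your skeleton begins the same way as the paper's proof --- Lemma~\ref{lem:mono-decay} plus the conditional expectation over the sampled coordinate, which produces the term $-\frac{1}{2L}\sum_j \abs{\grad_j \func{f}{\vec{x}{\ell}{}}}^{t+2}\big/\norm{\grad \func{f}{\vec{x}{\ell}{}}}_t^t$ --- but the exponent bookkeeping that you flag as the crux and leave open is exactly where the proposal breaks. First, your claimed bound $\sum_j\abs{g_j}^{t+2}/\sum_j\abs{g_j}^{t}\geq \norm{g}_q^2$ with $q=\frac{t+2}{t+1}$ and ``no extra $n$ factor'' is false: for $t=0$ the left-hand side is $\norm{g}_2^2/n$ while the right-hand side is $\norm{g}_2^2$, and for general $t$ the vector $g=(1,\dots,1)$ gives left-hand side $1$ versus right-hand side $n^{2/q}>1$. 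The dimension factor $n^{2-2/q}$ does not enter through \eqref{eq:mu-ineq} or through a $2$-norm restatement of the PL bound, as you speculate at the end (the rate in the lemma contains $\mu_q$ itself, so no conversion of $\mu_q$ is involved); it comes precisely from this ratio. The paper keeps the dual norm: $\sum_j\abs{g_j}^{t+2}\big/\norm{g}_t^t=\bigl(\norm{g}_{t+2}^{t}/\norm{g}_t^{t}\bigr)\norm{g}_{t+2}^{2}\geq n^{-2/(t+2)}\norm{g}_{t+2}^{2}$, and $2/(t+2)=2-\nicefrac{2}{q}$. Second, the consequence of \strongcvxity{q} that you correctly derive and then discard as ``the wrong direction,'' namely $\func{f}{\vec{x}{\ell}{}}-f^*\leq \frac{1}{2\mu_q}\norm{\grad \func{f}{\vec{x}{\ell}{}}}_{t+2}^2$ (obtained by minimizing \eqref{eq:strongly-convex} over $y$; the conjugate of $\frac{\mu_q}{2}\norm{\cdot}_q^2$ is $\frac{1}{2\mu_q}\norm{\cdot}_{t+2}^2$), is exactly the inequality the paper uses, and it is in the right direction: it says $\norm{\grad \func{f}{\vec{x}{\ell}{}}}_{t+2}^2\geq 2\mu_q\bigl(\func{f}{\vec{x}{\ell}{}}-f^*\bigr)$, which plugs directly into the expected-decrease bound once that bound is expressed in $\norm{\cdot}_{t+2}$ rather than converted to $\norm{\cdot}_q$. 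The unspecified ``companion bound'' in the $q$-norm that you then invoke on faith (``the paper will have established this; granting that\dots'') is never established and is not needed; as a result your final chain, whose first inequality uses the false ratio bound and whose contraction factor $1-\frac{\mu_q}{L}$ is missing the $n^{2-2/q}$, does not go through and cannot be repaired afterwards by adjusting $\mu_q$.

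The correct assembly, which is the paper's, is short: from Lemma~\ref{lem:mono-decay} and the sampling probabilities, $\expect{\func{f}{\vec{x}{\ell+1}{}}\mid\vec{x}{\ell}{}}-f^*\leq \func{f}{\vec{x}{\ell}{}}-f^*-\frac{1}{2L\,n^{2/(t+2)}}\norm{\grad \func{f}{\vec{x}{\ell}{}}}_{t+2}^2$ by the elementary norm inequality above; then substitute the dual-norm strong-convexity bound $\norm{\grad \func{f}{\vec{x}{\ell}{}}}_{t+2}^2\geq 2\mu_q\bigl(\func{f}{\vec{x}{\ell}{}}-f^*\bigr)$ to obtain \eqref{eq:local-conv}. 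So the two ingredients you identified are the right ones, but the specific norm in which the expected decrease must be kept (the dual exponent $t+2$, not $q$) and the origin of the $n^{2-2/q}$ factor (the $\norm{\cdot}_t$-to-$\norm{\cdot}_{t+2}$ comparison in the sampling ratio, not $\mu_q$) are the substance of the proof, and they are missing or misassigned in the proposal.
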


The proof of Lemma~\ref{lem:conv} can be found in
Appendix~\ref{app:local-conv}.

\begin{theorem} \label{thm:scd-conv}
    Consider function $f(x)$ as defined in \eqref{eq:LEVP-opt} and the
    iteration follows Algorithm~\ref{alg:SCD-Grad-LS} with $k=1$. For
    any $\vec{x}{0}{} \in D^+ \cup D^-$,
    \begin{equation} 
        \expect{ f\bigl( \vec{x}{\ell}{} \bigr) \mid \vec{x}{0}{}
        } - f^* \leq \left( 1-\frac{\mu_q}{L n^{2-\frac{2}{q}}}
        \right)^{\ell} \left( f\bigl( \vec{x}{0}{} \bigr) - f^*\right),
    \end{equation}
    where $q = \frac{t+2}{t+1}$.  Moreover,
    \begin{equation} 
        \expect{ \dist{\vec{x}{\ell}{}}{X^*}^2 \mid \vec{x}{0}{} }
        \leq \frac{2}{\mu_2} \left( 1-\frac{\mu_q}{L n^{2-\frac{2}{q}}}
        \right)^{\ell} \left( f\bigl( \vec{x}{0}{} \bigr) - f^*\right).
    \end{equation}
\end{theorem}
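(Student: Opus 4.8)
The plan is to bootstrap the single-step contraction estimate of Lemma~\ref{lem:conv} into the stated geometric rate by iterating it along the Markov chain of iterates, and then to turn the resulting function-value bound into the distance bound using the strong convexity from Lemma~\ref{lem:smooth}.

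First I would establish that the whole trajectory remains in $D^+\cup D^-$. Since $\vec{x}{0}{}\in D^+\cup D^-$, the last assertion of Lemma~\ref{lem:mono-decay} guarantees $\vec{x}{1}{}\in D^+\cup D^-$ for every outcome of the sampled coordinate $j_0$; iterating this deterministically along each sample path gives $\vec{x}{\ell}{}\in D^+\cup D^-$ for all $\ell\ge 0$. Consequently the hypothesis of Lemma~\ref{lem:conv} holds at every iteration, so with $\rho := 1 - \frac{\mu_q}{L n^{2-\frac{2}{q}}}$ we have, for each $\ell$,
\begin{equation*}
    \expect{ f\bigl( \vec{x}{\ell+1}{} \bigr) - f^* \mid \vec{x}{\ell}{} } \le \rho\,\bigl( f\bigl( \vec{x}{\ell}{} \bigr) - f^* \bigr).
\end{equation*}
Because the sampling law at step $\ell$ in Algorithm~\ref{alg:SCD-Grad-LS} depends only on $\vec{x}{\ell}{}$, the sequence $(\vec{x}{\ell}{})$ is a Markov chain, and the tower property $\expect{\cdot\mid\vec{x}{0}{}} = \expect{\expect{\cdot\mid\vec{x}{\ell}{}}\mid\vec{x}{0}{}}$ upgrades this to
\begin{equation*}
    \expect{ f\bigl( \vec{x}{\ell+1}{} \bigr) - f^* \mid \vec{x}{0}{} } \le \rho\,\expect{ f\bigl( \vec{x}{\ell}{} \bigr) - f^* \mid \vec{x}{0}{} }.
\end{equation*}
An induction on $\ell$, starting from the trivial case $\ell=0$, then yields the first displayed inequality of the theorem.

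For the distance bound I would use Lemma~\ref{lem:smooth}: $f$ is \strongcvx{2} on each of $B^+$ and $B^-$ with constant $\mu_2$. Fixing a sample path, at iteration $\ell$ the point $\vec{x}{\ell}{}$ lies in some $D^{\pm}\subset B^{\pm}$, and $B^{\pm}$ contains the corresponding minimizer $\pm\sqrt{\lambda_1}v_1$. Applying \eqref{eq:strongly-convex} with $x=\vec{x}{\ell}{}$, $y=\pm\sqrt{\lambda_1}v_1$ and $\grad f(\pm\sqrt{\lambda_1}v_1)=0$ gives
\begin{equation*}
    f\bigl( \vec{x}{\ell}{} \bigr) - f^* \ge \frac{\mu_2}{2}\,\norm{\vec{x}{\ell}{} \mp \sqrt{\lambda_1}v_1}^2 \ge \frac{\mu_2}{2}\,\dist{\vec{x}{\ell}{}}{X^*}^2,
\end{equation*}
the last step because $X^*=\Set{\pm\sqrt{\lambda_1}v_1}$ contains that minimizer. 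Rearranging, taking $\expect{\cdot\mid\vec{x}{0}{}}$, and substituting the first inequality of the theorem produces the second.

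The computationally substantive work has already been done in Lemmas~\ref{lem:mono-decay}--\ref{lem:conv}; the only delicate point here — and the main thing to get right — is the Markov/measurability bookkeeping that justifies chaining the conditional expectations (namely that conditioning on $\vec{x}{\ell}{}$ alone fixes the distribution of $j_\ell$), together with the deterministic invariance of $D^+\cup D^-$ from Lemma~\ref{lem:mono-decay} that keeps Lemma~\ref{lem:conv} applicable at every step of every sample path.
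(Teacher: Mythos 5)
Your argument is correct and is essentially the paper's own proof: invariance of $D^+\cup D^-$ along every sample path via the last assertion of Lemma~\ref{lem:mono-decay}, chaining the one-step bound of Lemma~\ref{lem:conv} through the tower property of conditional expectation, and then the distance estimate from strong convexity (constant $\mu_2$) using that the minimizer lying in the same set $D^\pm$ as $\vec{x}{\ell}{}$ has vanishing gradient. One notational nitpick: in \eqref{eq:strongly-convex} the gradient is evaluated at $x$, so to invoke $\grad f\bigl(\pm\sqrt{\lambda_1}v_1\bigr)=0$ you should take $x=\pm\sqrt{\lambda_1}v_1$ and $y=\vec{x}{\ell}{}$ rather than the reverse; the inequality you derive is nonetheless the correct one.
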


The proof of Theorem~\ref{thm:scd-conv} can be found in the
Appendix~\ref{app:local-conv}. Theorem~\ref{thm:scd-conv}
is slightly more complicated than Lemma~\ref{lem:conv} since
Algorithm~\ref{alg:SCD-Grad-LS} adopts the exact line search and there
are two sublevel sets $D^\pm$ for the non-convex objective function
$f(x)$. The iteration $\vec{x}{\ell}{}$ might jump between the two
sets. The monotonicity of the exact line search is the key to extend
Lemma~\ref{lem:conv} to Theorem~\ref{thm:scd-conv}.

\begin{remark}\label{rmk:scd-conv-mono}
    According to Theorem~\ref{thm:scd-conv}, the convergence
    rate depends on $n^{\nicefrac{2}{q}}\mu_q$, which depends
    on $q$ and hence $t$.  The right side of \eqref{eq:mu-ineq}
    indicates that, for a problem such that $\mu_1 = \mu_q = \mu_2$,
    the convergence rate of $q=1 \Leftrightarrow t=\infty$ is $n$
    times larger than that of $q=2 \Leftrightarrow t=0$. This means
    that greedy CDM could be potentially $n$ times faster than
    stochastic CDM with uniform sampling. According to the left
    side of inequality \eqref{eq:mu-ineq}, for $p \geq q$, we have
    $n^{\nicefrac{2}{p}}\mu_p \leq n^{\nicefrac{2}{q}}\mu_q$, which
    means the convergence rate of $q$ is equal to or faster than that
    of $p$. In terms of $t$, we conclude that the convergence rate
    of smaller $t$ is smaller than that of larger $t$ and larger $t$
    potentially leads to faster convergence.
\end{remark}

\begin{remark}
    For a fair comparison between CDMs with traditional methods
    such as gradient descent, we should take $n$ iterations of
    CDM and compare it against a single iteration of gradient
    descent since one iteration of CDM only updates one coordinate
    whereas one iteration of gradient descent updates $n$
    coordinates. Under such a setting, Theorem~\ref{thm:scd-conv}
    implies that
    \begin{align*} 
        \expect{ f\bigl( \vec{x}{n\ell}{} \bigr) \mid \vec{x}{0}{}
        } - f^* &\leq \left( 1-\frac{\mu_q}{L n^{2-\frac{2}{q}}}
        \right)^{n\ell} \left( f\bigl( \vec{x}{0}{} \bigr)
        - f^*\right) \\ &\leq \bigl(e^{-\frac{\mu_q}{L}
        n^{\frac{2}{q}-1}}\bigr)^\ell \left( f\bigl( \vec{x}{0}{}
        \bigr) - f^*\right), \\
    \end{align*}
    and
    \begin{align*} 
        \expect{ \dist{\vec{x}{n\ell}{}}{X^*}^2 \mid \vec{x}{0}{}
        } \leq \frac{2}{\mu_2} \bigl(e^{-\frac{\mu_q}{L}
        n^{\frac{2}{q}-1}}\bigr)^\ell \left( f\bigl( \vec{x}{0}{}
        \bigr) - f^*\right).
    \end{align*}
    For uniform sampling, $t = 0$ and $q = 2$, the decreasing
    factor is $e^{-\frac{\mu_q}{L} n^{\frac{2}{q}-1}} =
    e^{-\frac{\mu_2}{L}}$, which is independent of $n$. This result
    implies Algorithm~\ref{alg:SCD-Grad-LS} with uniform sampling
    has the same convergence rate as gradient descent. Combining
    with Remark~\ref{rmk:scd-conv-mono}, we conclude that
    Algorithm~\ref{alg:SCD-Grad-LS} with arbitrary $t\geq 0$ converges
    at least as fast as gradient descent.
\end{remark}

\begin{remark}
    If we extend Definition~\ref{def:Lipschitz} to multi-coordinate
    Lipschitz continuity, and assume $f(x)$ is multi-coordinate
    Lipschitz continuous with $\widetilde{L}$, a natural extension of
    Theorem~\ref{thm:scd-conv} would follow.  Unfortunately, if the
    coordinates are sampled independently, in an extreme case when
    $k$ same coordinates are sampled, the convergence rate would be
    the same as that in Theorem~\ref{thm:scd-conv}.  In the end,
    the convergence rate would not be improved and could be even
    worse. This is similar to the argument of CDM vs. full gradient
    descent method. In practice, when $k$ is not too large, and the
    sampled coordinates are distinct, we do observe $k$-fold speed-up
    of the iterations.  This observation suggests that the convergence
    rate in Theorem~\ref{thm:scd-conv} is not sharp for $k>1$.
\end{remark}

\begin{figure}[htp]
    \centering
    \begin{subfigure}[t]{0.48\textwidth}
        \includegraphics[width=\textwidth]{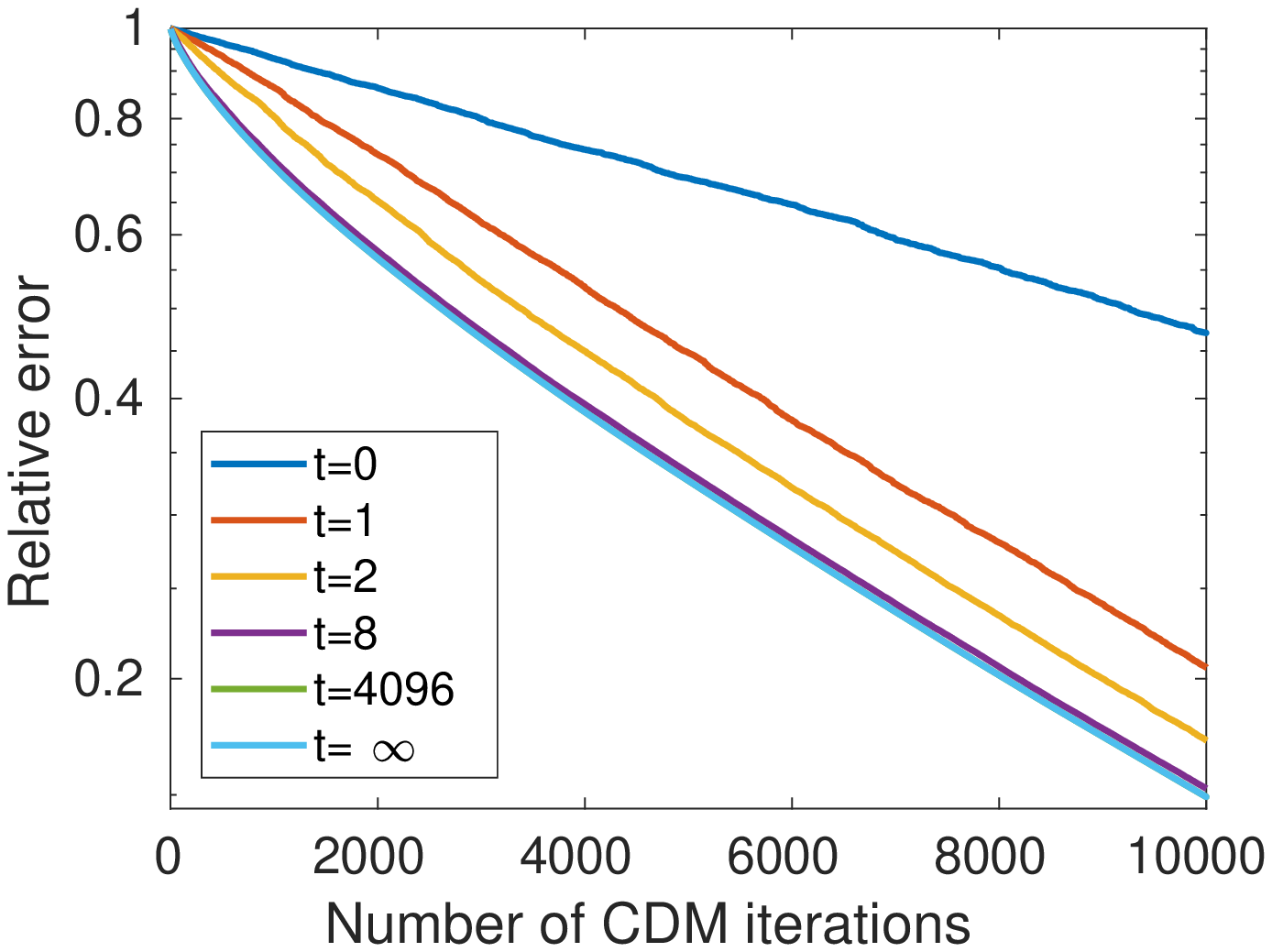}
        \caption{single-coordinate updating}
    \end{subfigure}
    ~
    \begin{subfigure}[t]{0.48\textwidth}
        \includegraphics[width=\textwidth]{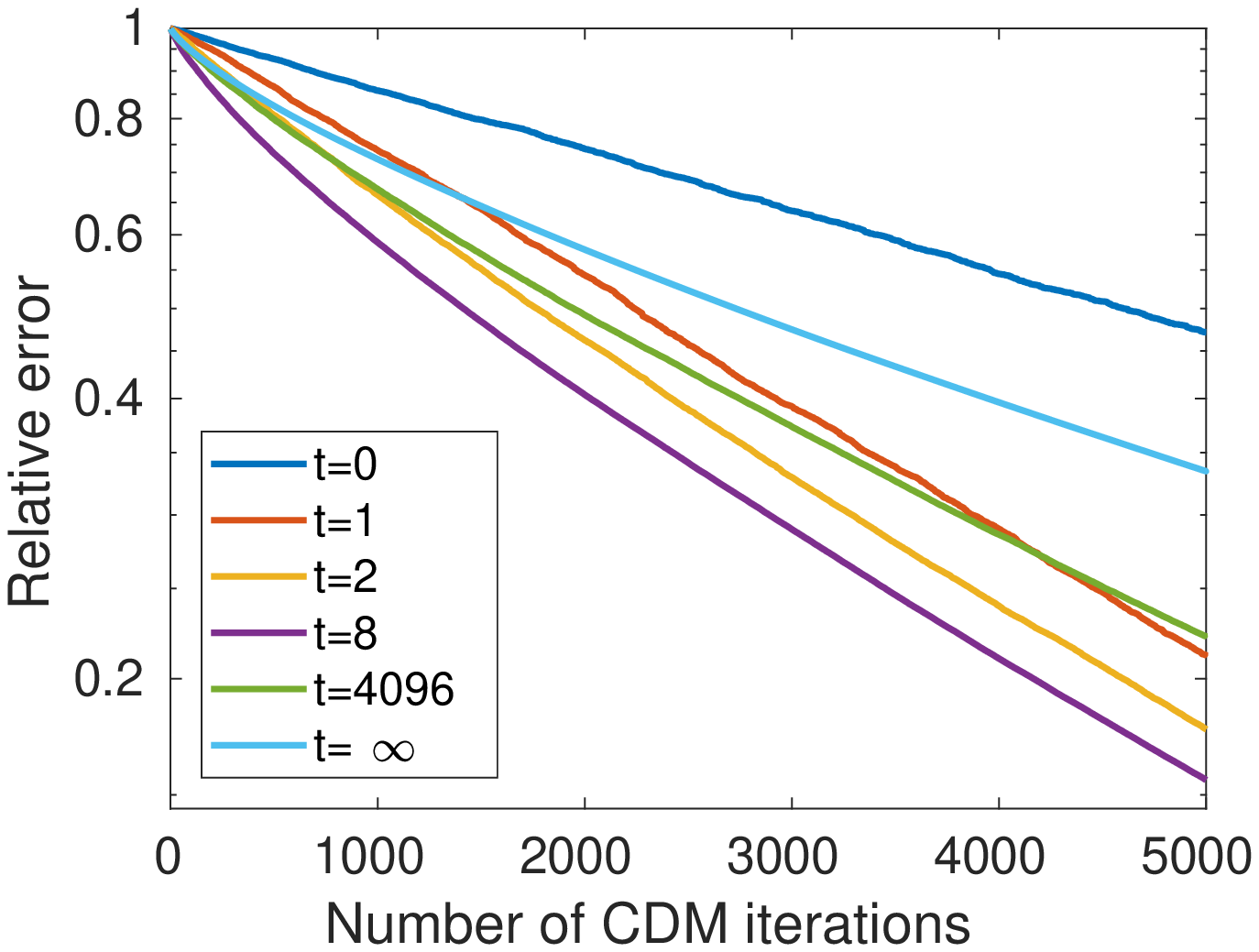}
        \caption{double-coordinate updating}
    \end{subfigure}
    \caption{Convergence behavior of SCD-Grad-Grad method with different
    choice of $t$. The axis of iterations on (a) is twice as long as
    that in (b).}

    \label{fig:local-conv-t}
\end{figure}

Combining Theorem~\ref{thm:scd-conv} together with the monotonicity
of $\mu_q$ as \eqref{eq:mu-ineq} shows that the lower bound of the
convergence rate increases monotonically as $q$ decreases. In terms of
$t$, the larger $t$ corresponds to smaller $q$ which leads to larger
$\mu_q$ for $q = \frac{t+2}{t+1}$. Here we would like to argue that
the equality of \eqref{eq:mu-ineq} is achievable which demonstrates
the power of CDMs with such a sampling strategy. Consider a simple
example $f(x) = \norm{Ax-b}^2$ for $A$ being a diagonal matrix with
diagonal entries $A_{i,i} = 1000$, $i=1,2,\dots,4999$, $A_{5000,5000}
= 1$ and $b = \begin{bmatrix} 0 & \cdots & 0 & 1
\end{bmatrix}^\top$ being a column vector of size $5000$. Through
elementary calculations, we can show that $\grad^2 f(x) = 2A^2$, $\mu_0
= 2$ and $\mu_\infty \approx 1.99$. Therefore all $\mu_t$ lies in the
small interval $[1.99,2]$ and different choice of $t$ would lead to
different rate of convergence. We minimize $f(x)$ using the SCD-Grad-Grad
method. The step size is fixed to be $10^{-6}$ and the initial vector
is chosen randomly. Figure~\ref{fig:local-conv-t} (a) demonstrates the
first $10^4$ iterations of the SCD-Grad-Grad with different choice of
$t$. The relative error is defined to be $\bigl(\func{f}{\vec{x}{\ell}{}}
- \func{f}{x^*}\bigr) / \func{f}{\vec{x}{0}{}}$. Clearly, different
choice of $t$ leads to different convergence rates. And we also notice
that the most significant improvement appears when increasing $t$ from 0
to 1, which implies that sampling with respect to magnitude of gradient
is much better than sampling uniformly. In Figure~\ref{fig:local-conv-t}
(a), the line with $t=4096$ and that with $t=\infty$ overlap.

According to either Theorem~\ref{thm:scd-conv} or
Figure~\ref{fig:local-conv-t} (a), it is convincing that for
single-coordinate updating CDM, picking the index with largest gradient
magnitude should be the optimal strategy. Figure~\ref{fig:local-conv-t}
(b) investigates the case with double-coordinate updating, i.e., sample
two coordinates independently and update two entries simultaneously in
every iteration. Comparing Figure~\ref{fig:local-conv-t} (a) and (b), we
notice for small $t$, double-coordinate updating strategy almost reduces
the required number of iterations by half. As $t \rightarrow \infty$,
the sampling approaches choosing the coordinate with largest gradient
magnitude with probability $1$. Sampling two coordinates independently
thus results in sampling the same index twice and the iteration behaves
similar to the single-coordinate updating CDM.  Readers may argue that
we should sample without replacement or sampling two indices with
two largest gradient magnitudes and updating accordingly.  However,
when the objective function is non-convex, for example, the objective
function $f(x)$ defined in \eqref{eq:LEVP-opt}, the iteration usually
stick in the middle of iteration (see Figure~\ref{fig:gcdm-vs-scdm} (a)).

\begin{remark}[Local convergence of greedy coordinate-wise descent method]
    The local convergence analysis of SCDMs can be extended to
    the greedy coordinate-wise descent methods.  GCD-Grad-LS as in
    Algorithm~\ref{alg:GCD-LS-LS} is a special case of SCD-Grad-LS with
    $t = \infty$ and $k=1$. \footnote{ Previous work~\cite{Lei2016}
    provides a local convergence proof for GCD-Grad-LS, which uses the
    theorem in \cite{Nesterov2012,Nutini2015}. Unfortunately, there is a
    small gap in the proof. The local domain defined in the proof is not
    a contractive domain of the iteration. Instead, the $D^{\pm}$ defined
    in \eqref{eq:defD} is local domain of contraction, and can be used
    to fill in the gap of the proof.} Hence Theorem~\ref{thm:scd-conv}
    proves the local convergence of the GCD-Grad-LS with $q = 1$.

GCD-LS-LS as in Algorithm~\ref{alg:GCD-LS-LS} conducts optimal
coordinate-wise descent every iteration. Therefore, for GCD-LS-LS,
Lemma~\ref{lem:mono-decay} can be proved for any index of coordinate $j$
and then Lemma~\ref{lem:conv} holds for $q = 1$. The modified lemmas
lead to the following corollary.

\begin{corollary} \label{cor:gcd-conv}
    Consider function $f(x)$ as defined in \eqref{eq:LEVP-opt} and
    the iteration follows Algorithm~\ref{alg:GCD-LS-LS}. For any
    $\vec{x}{0}{} \in D^+ \cup D^-$,
    \begin{equation} \label{eq:local-conv-gcd1}
        \expect{ f\bigl( \vec{x}{\ell}{} \bigr) \mid \vec{x}{0}{}
        } - f^* \leq \left( 1-\frac{\mu_1}{L}
        \right)^{\ell} \left( f\bigl( \vec{x}{0}{} \bigr) - f^*\right).
    \end{equation}
    Moreover,
    \begin{equation} \label{eq:local-conv-gcd2}
        \expect{ \dist{\vec{x}{\ell}{}}{X^*}^2 \mid \vec{x}{0}{} }
        \leq \frac{2}{\mu_2} \left( 1-\frac{\mu_1}{L}
        \right)^{\ell} \left( f\bigl( \vec{x}{0}{} \bigr) - f^*\right).
    \end{equation}
\end{corollary}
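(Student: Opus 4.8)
The plan is to follow the same template that proves Theorem~\ref{thm:scd-conv}, but replace the random coordinate pick by the deterministic greedy choice $j_\ell = \argmin_j \Delta f_j$ used in Algorithm~\ref{alg:GCD-LS-LS}. The key observation, already recorded in the discussion preceding the corollary, is that GCD-LS-LS performs the \emph{optimal} coordinate-wise move: for every coordinate $j$ it computes the exact line-search decrement $\Delta f_j \le 0$ and moves along the one with the largest decrement. Hence, whenever $\vec{x}{\ell}{}\in D^+\cup D^-$, Lemma~\ref{lem:mono-decay} holds simultaneously for \emph{every} $j$, i.e.\
\begin{equation*}
    \func{f}{\vec{x}{\ell+1}{}} \le \func{f}{\vec{x}{\ell}{}}
    - \frac{1}{2L}\max_j\bigl(\grad_j \func{f}{\vec{x}{\ell}{}}\bigr)^2
    = \func{f}{\vec{x}{\ell}{}} - \frac{1}{2L}\norm{\grad \func{f}{\vec{x}{\ell}{}}}_\infty^2,
\end{equation*}
and in particular $\vec{x}{\ell+1}{}\in D^+\cup D^-$, so the iteration stays in the contraction set. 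This plays the role of the per-step decrease estimate that, in the stochastic case, only held in expectation with the factor $\norm{\grad f}_q^2$ for $q=\frac{t+2}{t+1}$; here we get it deterministically with the $\ell^\infty$ norm, which is the $t=\infty$, $q=1$ limit.

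Next I would plug this into the \strongcvxity{1} bound. By Lemma~\ref{lem:smooth} and \eqref{eq:mu-ineq}, $f$ is \strongcvx{1} on $D^\pm$ with some constant $\mu_1>0$ (explicitly $\mu_1 \ge n^{-1}\mu_2$, but only positivity is needed). Minimizing the right-hand side of \eqref{eq:strongly-convex} with $p=1$ over $y$ in the relevant ball — exactly as in the proof of Lemma~\ref{lem:conv} — yields the Polyak--\L{}ojasiewicz-type inequality $\norm{\grad f(x)}_\infty^2 \ge \mu_1\,(f(x)-f^*)$ for $x\in D^+\cup D^-$ (the dual norm of $\norm{\cdot}_1$ being $\norm{\cdot}_\infty$ is what makes the $\ell^\infty$ gradient norm appear). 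Combining the two displays gives the one-step contraction $\func{f}{\vec{x}{\ell+1}{}} - f^* \le \bigl(1-\frac{\mu_1}{L}\bigr)\bigl(\func{f}{\vec{x}{\ell}{}}-f^*\bigr)$, which is \eqref{eq:local-conv-gcd1} after iterating; since the bound is deterministic, the conditional expectation is vacuous. Finally, \eqref{eq:local-conv-gcd2} follows from \eqref{eq:local-conv-gcd1} together with \strongcvxity{2} (Lemma~\ref{lem:smooth}): strong convexity with $p=2$ gives $\frac{\mu_2}{2}\dist{\vec{x}{\ell}{}}{X^*}^2 \le f(\vec{x}{\ell}{})-f^*$, exactly as in Theorem~\ref{thm:scd-conv}.

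The one point needing care — the same point the authors flag as ``the key'' in extending Lemma~\ref{lem:conv} to Theorem~\ref{thm:scd-conv} — is that there are two disjoint sublevel sets $D^+$ and $D^-$, and a single exact-line-search step could in principle carry the iterate from one to the other. This is handled by the monotonicity $\func{f}{\vec{x}{\ell+1}{}}\le\func{f}{\vec{x}{\ell}{}}$ inherent to exact line search: the function value never increases, so if $\vec{x}{0}{}\in D^+\cup D^-$ then every $\vec{x}{\ell}{}$ stays in $D^+\cup D^-$ (by the definition \eqref{eq:defD} of $D^\pm$ as sublevel sets capped at $\min_{\partial B^\pm} f$), and the contraction estimate above applies regardless of which of the two sets the iterate currently occupies because $\mu_1$, $\mu_2$, and $L$ are the same on $B^+$ and $B^-$. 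I expect the main obstacle to be purely expository — namely making precise that Lemma~\ref{lem:mono-decay} and Lemma~\ref{lem:conv}, stated for the sampled index $j_\ell$, do go through verbatim for an arbitrary fixed $j$ and hence for the greedy $\argmax$ — rather than any genuinely new estimate; once that is granted, the corollary is the $q=1$ specialization of Theorem~\ref{thm:scd-conv}.
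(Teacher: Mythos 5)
Your argument is correct and is essentially the paper's own proof: the paper's justification consists precisely of observing that Lemma~\ref{lem:mono-decay} holds for every coordinate index $j$ (so the greedy step decreases $f$ by at least $\frac{1}{2L}\norm{\grad f}_\infty^2$), that Lemma~\ref{lem:conv} then holds with $q=1$, and that the contraction/monotonicity argument of Theorem~\ref{thm:scd-conv} carries over verbatim. One cosmetic slip: the conjugate bound from \strongcvxity{1} gives $\norm{\grad f(x)}_\infty^2 \geq 2\mu_1\bigl(f(x)-f^*\bigr)$, and it is this factor of $2$ that cancels the $\frac{1}{2L}$ to yield the stated rate $1-\frac{\mu_1}{L}$ rather than $1-\frac{\mu_1}{2L}$.
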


The local convergence of GCD-Grad-LS and GCD-LS-LS are achieveable,
while the global convergences are so far still open. Numerically,
both methods converge to global minima for most the cases we tested.
\end{remark}

\begin{remark}[Convergence behavior in non-convex area]
    We demonstrate one example such that the stochastic CDMs
    outperforms greedy CDMs even all methods are conducting single
    coordinate updating. We generate a random matrix $A_{108} -
    100 I$ of size 5000, whose the largest eigenvalue is 8 and other
    eigenvalues are distributed equally on $[-99,0)$. According to
    Lemma~\ref{lem:strict-saddle-point}, $0$ is the only strict saddle
    point of the problem. If all methods start with a sparse initial
    vector, in this problem, they all converge to the strict saddle
    point first, and then escape from it. Figure~\ref{fig:gcdm-vs-scdm}
    (b) shows the convergence behavior of five methods ordered by the
    degree of greediness. The horizontal line is the relative error
    at the strict saddle point 0. As we can see that it takes more
    time for greedier method to escape from the saddle point. And
    although SCD-Grad-LS(0) escape from the saddle point fastest,
    but the overall performance is worse than SCD-Grad-LS(1). This
    motivates us to apply SCD-Grad-LS with small $t$ in practice. Such
    methods are almost as efficient as the greedy methods, as shown in
    Figure~\ref{fig:local-conv-t}, they are also robust to non-convex
    objective functions near saddle points.
\end{remark}

\section{Numerical results}
\label{sec:num-res}

In this section, we perform numerical tests of the methods mentioned in
previous sections on two different kinds of matrices. The first numerical
test calculates the largest eigenvalue of a random symmetric dense
matrix, where we have control of the difficulty of the problems. The
leading eigenvector is evenly distributed among all the entries. The
second test calculates the ground energy and ground state of the two
dimensional (2D) Hubbard model, which is one of our target applications
for the quantum many-body calculation. The ground state problem in
quantum many-body system is equivalent to find the smallest eigenvalue
of a huge sparse symmetric matrix, with each coordinate representing
the coefficient of a Slater determinant. Although the eigenvector in
this example is dense, it is also `sparse' in some sense, which means
a great majority of the entries is close to zero.  Without eigenvector
compression, for the quantum many-body calculation, the state-of-the-art
algorithms are in fact variants of power iterations. Hence in this
paper, we compare coordinate-wise methods against power method as it
is a simple baseline algorithm. Due to the nature of the problem, it
is anticipated that CDMs will have much better performance over the
traditional power method.

Three quantities are used to measure the convergence of the methods:
the square root of the relative error of the objective function
value~\eqref{eq:LEVP-opt}, the relative error of the eigenvalue and the
tangent of the angle between $\vec{x}{\ell}{}$ and the eigenvector. The
square root of the relative error of the objective function value is
defined as
\begin{equation} \label{eq:error_obj}
    \epsilon_{\mathrm{obj}} = \sqrt{ \frac{\func{f}{x^{(\ell)}} -
    f^*}{f^*} },
\end{equation}
where $f^*$ denotes the minimum of $f(x)$. Since $f^*$ is on the scale
of $\lambda_1^2$, we use the square root of the relative error of
the objective function which is on the same scale as $\lambda_1$. To
estimate the eigenvalue, we use
\begin{equation} \label{eq:proj_energy}
E = \frac{x_*^\top Ax^{(\ell)}}{x_*^\top x^{(\ell)}},
\end{equation}
where $x_*$ is the reference vector that overlaps with the eigenvector
$v_1$. In our test, $x_*$ is a unit vector and will be specified
later. This estimator is referred as the projected energy in the
context of quantum chemistry. Thus the relative error is define by
\begin{equation} \label{eq:error_energy}
\epsilon_{\mathrm{energy}} = \frac{\abs{E -
\lambda_1}}{\abs{\lambda_1}}.
\end{equation}
Last,
\begin{equation} \label{eq:error_tan}
\epsilon_{\mathrm{tan}} = \tan{\theta(v^{(\ell)}, v_1)}
\end{equation}
is used to measure the convergence of the eigenvector.

The number of matrix column evaluation is used to measure the efficiency
of the methods. There are several reasons why the number of matrix column
evaluation is a good measure. First it is independent of the computation
environment. Second we assume the evaluation of matrix column is the
most expensive step. It is usually true in quantum many-body calculation
in chemistry (full configuration interaction method) because the matrix
is too large to be stored, the evaluation has to be on-th-fly and the
evaluation of each entry is relatively expensive. Third, it can be used
to compare with other related methods.

All methods are implemented and tested in MATLAB R2017b. The exact
eigenvalue and eigenvector of each problem are calculated by the
``\emph{eigs}'' function in Matlab with high precision.

\subsection{Dense random matrices}

\begin{table}[t] \label{tab:A108}
    \centering
    \caption{Performance of various algorithms for $A_{108}$}
    \small
    \begin{tabular}{rrrrrr}
        \toprule
        Method & $k$ & Min Iter Num & Med Iter Num & Max Iter Num &
        Total Col Access\\
        \toprule
                     PM & 5000 &      - &    135 &      - & 675000 \\
                    CPM &   1 &      - & 309085 &      - & 309085 \\
                    CPM &   4 &      - &  75829 &      - & 303316 \\
                    CPM &  16 &      - &  18809 &      - & 300944 \\
        \midrule
 SI-GSL($\lambda_1$) &   1 &      - & 260000 &      - & 260000 \\
 SI-GSL($1.5\lambda_1$) &   1 &      - & 1002283 &      - & 1002283 \\
        \midrule
             Grad-vecLS & 5000 &      - &     80 &      - & 400000 \\
            GCD-Grad-LS &   1 &      - & 109751 &      - & 109751 \\
              GCD-LS-LS &   1 &      - & 100464 &      - & 100464 \\
        \midrule
      SCD-Grad-vecLS(0) &   4 &  86054 &  94837 & 128467 & 379348 \\
      SCD-Grad-vecLS(0) &  16 &  21409 &  24066 &  30850 & 385056 \\
      SCD-Grad-vecLS(1) &   4 &  36081 &  40866 &  52279 & 163464 \\
      SCD-Grad-vecLS(1) &  16 &   9225 &  10205 &  11934 & 163280 \\
      SCD-Grad-vecLS(2) &   4 &  29867 &  33538 &  45480 & 134152 \\
      SCD-Grad-vecLS(2) &  16 &   7417 &   8334 &  11410 & 133344 \\
         SCD-Grad-LS(0) &   1 & 342945 & 377783 & 483842 & 377783 \\
         SCD-Grad-LS(0) &   4 &  86241 &  96074 & 150090 & 384296 \\
         SCD-Grad-LS(0) &  16 &  21952 &  24487 &  31313 & 391792 \\
         SCD-Grad-LS(1) &   1 & 146161 & 166415 & 223471 & 166415 \\
         SCD-Grad-LS(1) &   4 &  38034 &  41773 &  53257 & 167092 \\
         SCD-Grad-LS(1) &  16 &   9207 &  10479 &  15063 & 167664 \\
         SCD-Grad-LS(2) &   1 & 120411 & 136468 & 177161 & 136468 \\
         SCD-Grad-LS(2) &   4 &  30200 &  34091 &  44993 & 136364 \\
         SCD-Grad-LS(2) &  16 &   7583 &   8631 &  11415 & 138096 \\
        \bottomrule
    \end{tabular}
\end{table}

We first show the advantage of the coordinate-wise descent methods
over power method on dense random matrices. All matrices tested in
this section involve symmetric matrices of size 5000 with random
eigenvectors, i.e.,
\begin{equation}
    A_{\lambda_1} = Q
    \begin{bmatrix}
        \lambda_1 & & & \\
                  & \lambda_2 & & \\
                  & & \ddots & \\
                  & & & \lambda_{5000}
    \end{bmatrix} Q^\top,
\end{equation}
where $\lambda_2, \dots, \lambda_{5000}$ are equally distributed
on $[1,100)$ and $Q$ is a random unitary matrix generated by a QR
factorization of a random matrix with each entry being Gaussian
random number. We use $\lambda_1$ to control the difficulty of the
problem. In particular, we tested three matrices $A_{108}$, $A_{101}$,
and $A_{108}+1000I$.  The last one is a shifted version of $A_{108}$.
It is obvious that $A_{108}+1000I$ is more difficult than $A_{108}$
for power methods. For the optimization problem~\eqref{eq:LEVP-opt},
it is also more difficult since the landscape of the objective function
becomes steeper. While, in practice, CDMs for the optimization problem is
less sensitive to the shift. In this section, all results are reported
in terms of number of column accesses, converging to $10^{-6}$ under
the measure of $\epsilon_{\mathrm{obj}}$. For non-stochastic methods,
we report the number of column accesses from a single run and reported
in the column named ``Med Iter Num''. While, for stochastic methods,
we report the minimum, medium, and maximum numbers of iterations from
100 runs. In all tables, the column named ``Total Col Access'' provides
the total number of matrix column evaluation, which is calculated as
the product of number of coordinates updating each iteration, i.e.,
$k$, and the medium number of iterations.  The initial vector of CDM
is always chosen to be the unit vector $e_1$, with $1$ in the first
coordinate and $0$ in all others.

\begin{table}[t] \label{tab:A101}
    \centering
    \caption{Performance of various algorithms for $A_{101}$}
    \small
    \begin{tabular}{rrrrrr}
        \toprule
        Method & $k$ & Min Iter Num & Med Iter Num & Max Iter Num &
        Total Col Access\\
        \toprule
                     PM & 5000 &      - &    839 &      - & 4195000 \\
                    CPM &   1 &      - & 1899296 &      - & 1899296 \\
                    CPM &   4 &      - & 471779 &      - & 1887116 \\
                    CPM &  16 &      - & 125525 &      - & 2008400 \\
        \midrule
 SI-GSL($\lambda_1$) &   1 &      - & 460000 &      - & 460000 \\
 SI-GSL($1.5\lambda_1$) &   1 &      - & 3972548 &      - & 3972548 \\
        \midrule
             Grad-vecLS & 5000 &      - &    480 &      - & 2400000 \\
            GCD-Grad-LS &   1 &      - & 726093 &      - & 726093 \\
              GCD-LS-LS &   1 &      - & 554521 &      - & 554521 \\
        \midrule
      SCD-Grad-vecLS(0) &   4 & 444954 & 557394 & 788291 & 2229576 \\
      SCD-Grad-vecLS(0) &  16 & 118259 & 140177 & 178029 & 2242832 \\
      SCD-Grad-vecLS(1) &   4 & 207804 & 253035 & 389396 & 1012140 \\
      SCD-Grad-vecLS(1) &  16 &  53497 &  62959 &  84697 & 1007344 \\
      SCD-Grad-vecLS(2) &   4 & 173455 & 206857 & 367199 & 827428 \\
      SCD-Grad-vecLS(2) &  16 &  43618 &  51701 &  82524 & 827216 \\
         SCD-Grad-LS(0) &   1 & 1896835 & 2208812 & 3279223 & 2208812 \\
         SCD-Grad-LS(0) &   4 & 483657 & 571571 & 832675 & 2286284 \\
         SCD-Grad-LS(0) &  16 & 119192 & 139788 & 203427 & 2236608 \\
         SCD-Grad-LS(1) &   1 & 871362 & 1022169 & 1481621 & 1022169 \\
         SCD-Grad-LS(1) &   4 & 206500 & 262616 & 402321 & 1050464 \\
         SCD-Grad-LS(1) &  16 &  54106 &  63607 &  94236 & 1017712 \\
         SCD-Grad-LS(2) &   1 & 717007 & 820728 & 1205225 & 820728 \\
         SCD-Grad-LS(2) &   4 & 176141 & 209155 & 288910 & 836620 \\
         SCD-Grad-LS(2) &  16 &  43237 &  51768 &  75115 & 828288 \\
        \bottomrule
    \end{tabular}
\end{table}

\begin{table}[t] \label{tab:A1108}
    \centering
    \caption{Performance of various algorithms for $A_{108}+1000I$}
      \small
    \begin{tabular}{rrrrrr}
        \toprule
        Method & $k$ & Min Iter Num & Med Iter Num & Max Iter Num &
        Total Col Access\\
        \toprule
                     PM & 5000 &      - &   1214 &      - & 6070000 \\
                    CPM &   1 &      - & 3338202 &      - & 3338202 \\
                    CPM &   4 &      - & 853724 &      - & 3414896 \\
                    CPM &  16 &      - & 214645 &      - & 3434320 \\
         \midrule
 SI-GSL($\lambda_1$) &   1 &      - & 140000 &      - & 140000 \\
 SI-GSL($1.5\lambda_1$) &   1 &      - & 1409635 &      - & 1409635 \\
         \midrule
             Grad-vecLS & 5000 &      - &    875 &      - & 4375000 \\
            GCD-Grad-LS &   1 &      - &  92532 &      - &  92532 \\
              GCD-LS-LS &   1 &      - & 102098 &      - & 102098 \\
         \midrule
      SCD-Grad-vecLS(0) &   4 &  78212 &  92682 & 126872 & 370728 \\
      SCD-Grad-vecLS(0) &  16 &  18632 &  21257 &  30375 & 340112 \\
      SCD-Grad-vecLS(1) &   4 &  31107 &  34023 &  48125 & 136092 \\
      SCD-Grad-vecLS(1) &  16 &   7888 &   8570 &  10915 & 137120 \\
      SCD-Grad-vecLS(2) &   4 &  25047 &  27733 &  37547 & 110932 \\
      SCD-Grad-vecLS(2) &  16 &   6111 &   7000 &   9788 & 112000 \\
         SCD-Grad-LS(0) &   1 & 336724 & 404492 & 486490 & 404492 \\
         SCD-Grad-LS(0) &   4 &  88807 & 100736 & 127954 & 402944 \\
         SCD-Grad-LS(0) &  16 &  22687 &  25715 &  34403 & 411440 \\
         SCD-Grad-LS(1) &   1 & 125623 & 139462 & 192680 & 139462 \\
         SCD-Grad-LS(1) &   4 &  31205 &  34731 &  54808 & 138924 \\
         SCD-Grad-LS(1) &  16 &   8081 &   9524 &  13275 & 152384 \\
         SCD-Grad-LS(2) &   1 & 101177 & 110757 & 147548 & 110757 \\
         SCD-Grad-LS(2) &   4 &  25399 &  31354 &  52048 & 125416 \\
         SCD-Grad-LS(2) &  16 & \multicolumn{4}{c}{do not converge} \\
        \bottomrule
    \end{tabular}
\end{table}

Among the methods compared, PM and Grad-vecLS with $k=5000$
update all coordinates per iteration. All other methods are
coordinate-wise methods and all of them show speedup. PM converges
the slowest. Although the iteration number is small, the cost of
each iteration is huge. CPM reduces the cost by half on average,
which is a big improvement. Grad-vecLS with $k=5000$ is equivalent
to gradient descent with exact line search. Since it updates at the
full gradient direction, the cost is also large. SI-GSL($\lambda_1$)
and SI-GSL($1.5\lambda_1$) adopt $\lambda_1$ and $1.5\lambda_1$ as the
estimation of the leading eigenvalue respectively, and other parameters
in SI-GSL are set to be the underlying true values. SI-GSL($\lambda_1$)
in general performs compariable to GCD-Grad-LS and GCD-LS-LS, while
SI-GSL($1.5\lambda_1$) is siganificantly slower than all optimization
based CDMs. The performance of SI-GSL sensitively depends on the
provided a priori estimation of the leading eigenvalue, which makes it
impractical for our target application settings. The greedy methods
GCD-Grad-LS and GCD-LS-LS speed up significantly. They are about
six times faster than PM for $A_{108}$ and $A_{101}$. While, for
$A_{108}+1000I$, the greedy methods is about 70 times faster. The two
greedy methods themselves have similar cost. For the stochastic methods,
we use gradient information to pick the coordinates and use vecLS or LS
to update. Different probability power $t$ and number of coordinates
updated $k$ are tested. With the same parameters, SCD-Grad-vecLS and
SCD-Grad-LS share the similar performance. For $t=0$, which is equivalent
to uniform sampling, the number of column accesses are almost the same as
the steepest gradient descent. It is reasonable since uniform sampling
does not pick the important coordinate and should behave similar with
the full gradient. For $t=1$ and $t=2$, the speedup is obvious. The
performance for different $t$ agrees with Theorem~\ref{thm:scd-conv},
since larger $t$ shows faster convergence. Another thing to notice is
that the cost is not influenced by the different choices of $k$ for CPM
and SCD. Therefore if we do the real asynchronized version, more speedup
can be gained. Moreover, for stochastic CDMs, the variance is not large.

Comparing the performance for $A_{101}$ and $A_{108}$, the numbers of
column accesses for $A_{101}$ are approximately six times as many as
for $A_{108}$ for all methods. This behavior is anticipated. Comparing
$A_{108}$ to $A_{108} + 1000I$, we see that PM, CPM and Grad-vecLS
slow down significantly by the shift, since the condition number
($\frac{\lambda_1}{\lambda_1 - \lambda_2}$ for PM and the condition
number of the Hessian for gradient descent) increases. However, all
SI-GSL, GCD and SCD methods converge at almost the same speed, or even
a little faster.

\subsection{Hubbard models}

In this test, we calculate the ground energy and ground state of the
2D Hubbard model. The fermion Hubbard model is widely used in condensed
matter physics, which models interacting fermion particles in a lattice
with the nearest neighbor hopping and on-site interaction. It is defined
by the Hamiltonian in the second quantization notation:
\begin{equation} \label{eq:hubbard_real}
    \hat{H} = -t\sum_{\langle r,r'\rangle,\sigma}
    \hat{c}_{r,\sigma}^\dagger \hat{c}_{r',\sigma} + U\sum_r
    \hat{n}_{r\uparrow}\hat{n}_{r\downarrow},
\end{equation}
where $t$ is the hopping strength and $U$ is the on-site interaction
strength. $r, r'$ are indexes of sites in the lattice, and $\langle
r,r'\rangle$ means $r$ and $r'$ are the nearest neighbor. The spin
$\sigma$ takes value of $\uparrow$ and $\downarrow$. The operators
$\hat{c}_{r,\sigma}^\dagger$ and $\hat{c}_{r,\sigma}$ are creation and
annihilation operators, which create or destroy a fermion at site $r$
with spin $\sigma$. They satisfy the canonical anticommutation relation:
\begin{equation*} \label{eq:CAR}
    \{\hat{c}_{r,\sigma}, \hat{c}_{r',\sigma'}^\dagger \} =
    \delta_{r,r'}\delta_{\sigma, \sigma'}, \qquad \{\hat{c}_{r,\sigma},
    \hat{c}_{r',\sigma'} \} = 0, \quad \text{and} \quad
    \{\hat{c}_{r,\sigma}^\dagger, \hat{c}_{r',\sigma'}^\dagger \} = 0,
\end{equation*}
where $\{A, B\} = AB + BA$ is the anti-commutator. 
$\hat{n}_{r,\sigma} = \hat{c}_{r,\sigma}^\dagger \hat{c}_{r,\sigma}$
is the number operator.

When the interaction is not strong, we usually work in the momentum
space. Because in the momentum space, the hopping term in the Hamiltonian
is diagonalized and the ground state is close to the Hatree-Fock (HF)
state. Mathematically speaking, the normalized leading eigenvector
$v_1$ is close to the unit vector $e_{\mathrm{HF}}$ whose nonzero
entry indicates the HF state.  The Hamiltonian in momentum space can
be written as
\begin{equation} \label{eq:hubbard_k}
    \hat{H} = t\sum_{k,\sigma} \varepsilon(k)\hat{n}_{k,\sigma} +
    \frac{U}{N^{\text{orb}}} \sum_{k,p,q}\hat{c}_{p-q,\uparrow}^\dagger
    \hat{c}_{k+q,\downarrow}^\dagger
    \hat{c}_{k,\downarrow}\hat{c}_{p,\uparrow},
\end{equation}
where $\varepsilon(k) = -2 (\cos(k_1) + \cos(k_2))$ and
$N^{\text{orb}}$ is the number of orbitals. In momentum space,
$k$ is the wave number and the creation and annihilation
operator can be obtained by the transform $\hat{c}_{k,\sigma}
= \frac{1}{\sqrt{N^{\text{orb}}}} \sum_r e^{ik\cdot r}
\hat{c}_{r,\sigma}$.

To study the performance of methods, we test several 2D Hubbard models
with different sizes. Here we report the results of two Hubbard
models: both of them are on the $4\times 4$ lattice with periodic
boundary condition. The hopping strength is $t=1$ and the interaction
strength is $U=4$. The first example contains $6$ electrons, with $3$
spin up and $3$ spin down, whereas the second example contains $10$
electrons, with $5$ spin up and $5$ spin down. The properties of the
Hamiltonians are summarized in Table~\ref{tb:hub_ham}. The nonzero
off-diagonal entries of $H$ take value $\pm \frac{U}{N^{\text{orb}}}
= \pm 0.25$, and the diagonal entries distribute in $(-20, 30)$ with
bell-like shape. From the table we see that the Hamiltonians are indeed
sparse. The smallest eigenvalue (Eigenvalues Min) is the ground energy
we are to compute, thus the difference between the smallest one and
the second smallest one is the eigengap.  Since the spectra of $H$ of
the two examples live in the interval $(-100, 100)$, the matrix $A =
100I - H$ is used for calculation such that the smallest eigenvalue of
$H$ becomes the largest one of $A$ and $A$ is positive definite. Both
the initial vector and the reference vector of the energy estimator are
chosen to be $10\,e_{\mathrm{HF}}$ for all calculation, this amounts to
an initial guess of eigenvector $e_{\mathrm{HF}}$ with eigenvalue $10^2
= 100$ on the same scale as the diagonal of $A$ (due to the $100 I$
shift). Notice that the reference vector has only one nonzero entry,
so computing the projected energy is cheap.

\begin{table}[ht]
\centering
    \caption{Properties of the Hubbard Hamiltonian\label{tb:hub_ham}}
    \begin{tabular}{rrcccccc}
        \toprule
        \multirow{2}{*}{$H$} & \multirow{2}{*}{Dim} &
        \multicolumn{3}{c}{nnz per col} &
        \multicolumn{3}{c}{Eigenvalues} \\
        & & Min & Med & Max & Min & 2nd Min & Max \\
        \toprule
        6 electrons & $1.96\times 10^4$ &
        $100$ & $102$ & $112$ & $-14.90$ & $-14.55$ & $20.26$ \\
        10 electrons & $1.19\times 10^6$ &
        $196$ & $202$ & $240$ & $-19.58$ & $-17.08$ & $32.73$ \\
        \bottomrule
    \end{tabular}
\end{table}

% Example 1, Hubbard 4*4 with 6 electrons
For the first example, the $4\times 4$ Hubbard model with $6$
electrons, we run the methods until the relative error of the
objective function value $e_{\mathrm{obj}} < 10^{-6}$. The result is
shown in Table~\ref{tb:hubbard446_result}.

\begin{table}[ht]
    \centering
    \caption{Performance of various algorithms for $4\times 4$ Hubbard
    model with $6$ electrons}\label{tb:hubbard446_result}
    \small
    \begin{tabular}{rrrrrr}
        \toprule
        Method & $k$ & Min Iter Num & Med Iter Num & Max Iter Num &
        Total Col Access\\
        \toprule
                     PM & 19600 &      - &   2255 &      - & 44198000 \\
                    CPM &   1 &      - & 456503 &      - & 456503 \\
                    CPM &  16 &      - &  29293 &      - & 468688 \\
        \midrule
             Grad-vecLS & 19600 &      - &    361 &      - & 7075600 \\
            GCD-Grad-LS &   1 &      - &  31997 &      - &  31997 \\
              GCD-LS-LS &   1 &      - &  30996 &      - &  30996 \\
        \midrule
      SCD-Grad-vecLS(1) &   4 &  36966 &  38763 &  40224 & 155052 \\
      SCD-Grad-vecLS(1) &  16 &  11490 &  12115 &  12644 & 193840 \\
      SCD-Grad-vecLS(1) &  32 &   6283 &   6642 &   6991 & 212544 \\
      SCD-Grad-vecLS(2) &   4 &  18934 &  19260 &  19613 &  77040 \\
      SCD-Grad-vecLS(2) &  16 &   6438 &   6547 &   6652 & 104752 \\
      SCD-Grad-vecLS(2) &  32 &   3880 &   3946 &   4002 & 126272 \\
         SCD-Grad-LS(1) &   1 & 117261 & 120613 & 123744 & 120613 \\
         SCD-Grad-LS(1) &   2 &  58536 &  60470 &  62209 & 120940 \\
         SCD-Grad-LS(1) &   4 &  28716 &  30152 &  30918 & 120608 \\
         SCD-Grad-LS(1) &   8 &  13053 &  15352 &  20197 & 122816 \\
         SCD-Grad-LS(2) &   1 &  47603 &  48136 &  48802 &  48136 \\
         SCD-Grad-LS(2) &   2 &  23789 &  24059 &  24399 &  48118 \\
         SCD-Grad-LS(2) &   4 & \multicolumn{4}{c}{do not converge}\\
         SCD-Grad-LS(2) &   8 & \multicolumn{4}{c}{do not converge} \\
      \bottomrule
    \end{tabular}
\end{table}
In Table~\ref{tb:hubbard446_result}, the second column $k$ is the
number of coordinates updated in one iteration. Each stochastic method
runs for $100$ times and we report the number of iterations and matrix
column accesses, similar to the previous tests.

Since the dimension of the Hubbard model is larger and the eigenvector
$v_1$ is sparser than that in the first dense matrix example, the
advantage of the coordinate-wise methods is more significant. They are
$10^2$ to $10^3$ faster than PM. GCD-Grad-LS and GCD-LS-LS are still
the fastest one. Stochastic CDMs also outperform PM and CPM. Comparing
with the greedy CDMs, they are about 3-5 times slower, which is more
than that in the previous tests.

% Example 2, 4*4 Hubbard with 10 electrons
For the second model with $10$ electrons in the $4\times 4$ grid, we
also compare with other methods, such as SCPM (stochastic version of
CPM), SCD-Uni-Grad (which samples coordinate uniformly and updates the
coordinate as the gradient multiplying a fixed stepsize), CD-Cyc-LS
(which picks the coordinate in a cyclic way and updates the coordinate
using line search), and SCD-Uni-LS (i.e., SCD-Grad-LS(0)).  The step
size, if used, is chosen to be $2$. This step size is larger than that
in Theorem~\ref{thm:CD-Cyc-Grad}, but numerically the iteration
converges. All methods besides PM update single coordinate per
iteration and running up to $10^7$ column accesses or desired
accuracy. Stochastic CDMs sample coordinate with respect to the
magnitude of the gradient, i.e., $t=1$. The convergence of the square
root of the relative error of objective function value
$\epsilon_{\mathrm{obj}}$, eigenvalue estimator
$\epsilon_{\mathrm{energy}}$ and eigenvector estimator
$\epsilon_{\mathrm{tan}}$ are plotted in Figure~\ref{fig:hub4410}.

\begin{figure}[htp]
\centering
    \begin{subfigure}[t]{0.8\textwidth}
    \includegraphics[width=\textwidth]{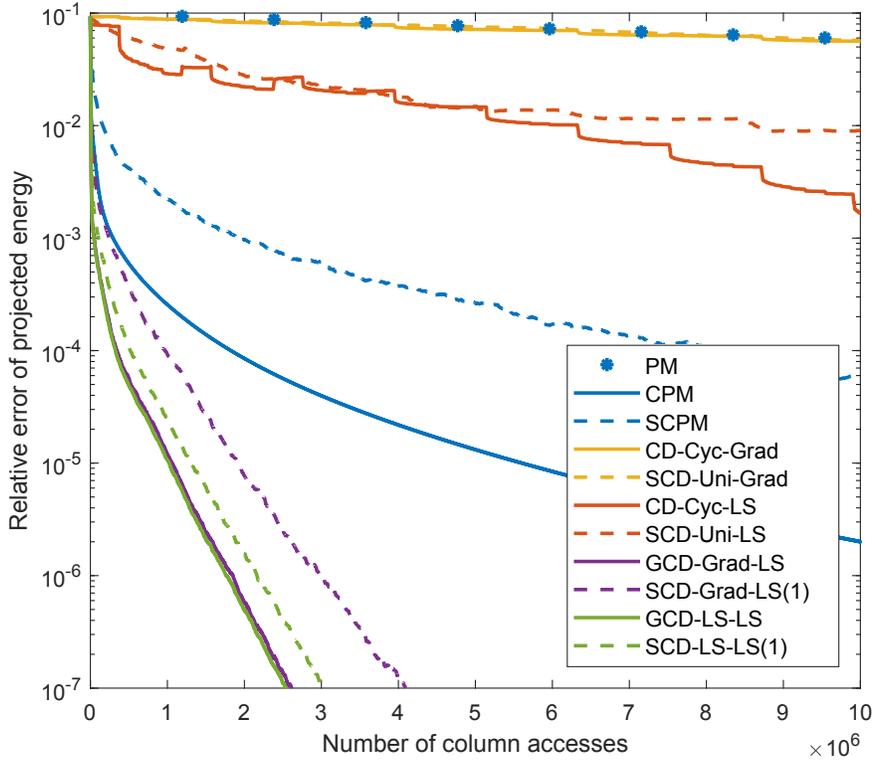}
        \caption{$\epsilon_{\mathrm{energy}}$, Relative error of
        projected energy}
    \end{subfigure}
    \centering
    \begin{subfigure}[t]{0.48\textwidth}
        \includegraphics[width=\textwidth]{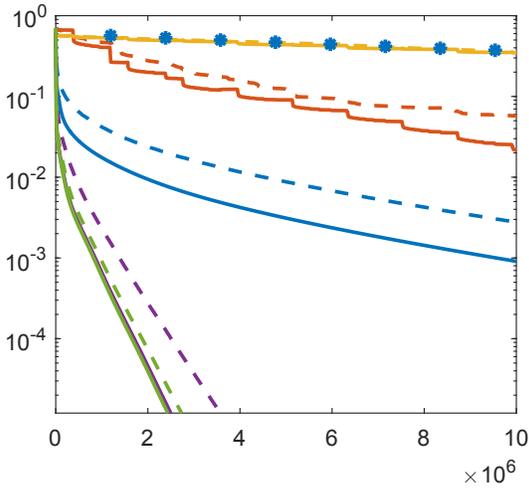}
        \caption{$\epsilon_{\mathrm{tangent}}$, Tangent of angle between
        $v^{(\ell)}$ and $v_1$}
    \end{subfigure}
    ~
    \begin{subfigure}[t]{0.48\textwidth}
        \includegraphics[width=\textwidth]{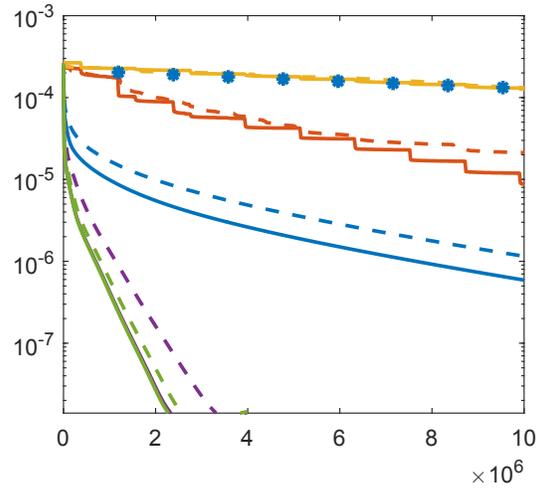}
        \caption{$\epsilon_{\mathrm{obj}}$, Relative error of objective
        function value}
    \end{subfigure}
    \caption{Convergence behavior of the methods for the $4\times 4$
    Hubbard model with $10$ electrons. $k=1$ for all methods except PM.}
    \label{fig:hub4410}
\end{figure}

From Figure~\ref{fig:hub4410}, we see that the convergence of the three
error measures share a similar pattern, thus the convergence of one
quantity should imply the convergence of the other two in practice. The
convergence of all methods tested consist of one or two stages: a
possible fast decay followed by linear convergence. We know that both
power method (PM) and gradient descent (Grad) converge linearly. In
this figure there are only $8$ iterates of PM since each iteration
needs the full access of the matrix. CD-Cyc-Grad and SCD-Uni-Grad seem
to behave similarly with slow linear decay, which is reasonable since
they treat each coordinate equally and should behave similarly as the
gradient descent method. CD-Cyc-LS and SCD-Uni-LS also decay linearly,
but with a faster rate. This is because updating coordinates by Grad
uses the same step size for all coordinates and all iterates, which
has to be small thus not optimal. LS always gives the fastest local
decay for different coordinates and different iterates. This argument
can be confirmed by the sudden-decay-behavior of CD-Cyc-LS. When
CD-Cyc-LS updates the important coordinate, it will give a large
`step size' and the error drops down rapidly.

All the other methods decay very fast at the beginning, and the common
feature is that they pick the coordinates for updating according to
their importance, instead of treating them equally. This justifies the
motivation of coordinate descent methods. Within $10^5$ column accesses,
equivalent to $\nicefrac{1}{12}$ iteration of PM, the projected energy
reaches $10^{-2}$ to $10^{-4}$ accuracy. This is really amazing,
and agrees with the result of the previous examples. GCD-Grad-LS and
GCD-LS-LS behave almost the same and they converge fastest both at the
initial stage and the linear convergence stage. The projected energy
reaches $10^{-8}$ accuracy when PM only iterates $3$ times. SCD-LS-LS
and SCD-Grad-LS also converge fast. The convergence rates of CPM and
SCPM are not as fast as SCD-LS-LS and SCG-Grad-LS, but they are still
much better than PM.

Comparing between the stochastic CDMs and the greedy CDMs, greedy
methods always converge a little faster, such as CD-Cyc-LS versus
SCD-Uni-LS and GCD-Grad-LS versus SCD-Grad-LS. This agrees with
our theoretical results and previous examples. If more than $1$
coordinate is updated at each iteration, greedy CDMs often get stuck
but stochastic CDMs are much less likely. Another thing to mention
is that in a small neighborhood of the minimum, updating by LS is
almost the same as updating by Grad with some step size depending on
the Hessian of the minimum. Thus, updating by Grad could perform as
well as LS ideally if there exists a step size which both guarantees
the convergence and also converges fast. On the other hand size,
LS choose a optimal step size every iteration.

\section{Conclusion}
\label{sec:conclusion}

We analyze the landscape of the non-convex objective function $f(x)$
of LEVP as \eqref{eq:LEVP-opt} and conclude that all local minima of
$f(x)$ are global minima.  We then investigate CD-Cyc-Grad and prove
the global convergence of CD-Cyc-Grad on $f(x)$ almost surely. Through
the derivation of the exact line search along a coordinate, GCD-LS-LS
is presented as the most greedy CDM and avoids many saddle points of
$f(x)$. Finally we propose SCD-Grad-vecLS($t$) and SCD-Grad-LS($t$). The
local convergence analysis for these stochastic CDMs shows that
convergence rate of either GCD-LS-LS or SCD-Grad-LS($t$) with $t>0$ is
provably faster or equal to that of SCD-Uni-LS. And one example as in
Figure~\ref{fig:gcdm-vs-scdm} (b), demonstrates the robustness of the
SCD-Grad-LS($t$) with small $t$. Therefore we recommend SCD-Grad-LS(1)
as an efficient and robust CDM for LEVP.  Numerical results agree with
our analysis. The performance of CDMs on a protypical quantum many-body
problem of the Hubbard model shows that CDMs has great potential for
quantum many-body calculations.

There are several directions worth exploring for future works.
Momentum acceleration for coordinate descent~\cite{Allen-Zhu2016,
Lee2013, Lin2015, Nesterov2012} can be combined with the proposed methods
to potentially accelerate the convergence at least in the local convex
area.  Also it is of interest to pursue the asynchronous implementation
of the proposed methods and prove the convergence property in that
setting. Another possible future direction is to employ the sparsity
of the matrix into the method such that per-iteration cost could be
further reduced.  Last but not least, the proposed methods should be
extensively tested on other quantum many-body calculations and more
other areas whose computational bottlenecks are the LEVP.

%=====================================================================

\medskip
\noindent
{\bf Acknowledgments.}  This work is partially
supported by the National Science Foundation under awards OAC-1450280
and DMS-1454939. We thank Stephen J. Wright, Weitao Yang and Wotao Yin
for helpful discussions.

\bibliographystyle{apalike} \bibliography{library}

\appendix

\section{Proof of Theorem~\ref{thm:CD-Cyc-Grad}} \label{app:CD-Cyc-Grad}

In order to show the global convergence of the CD-Cyc-Grad, we
introduce a few definitions and terminologies here. Let $\chi^s$
be a set of strict saddle points of $f(x)$, i.e., $\chi^s$ is
characterized by Lemma~\ref{lem:strict-saddle-point}. Let $g_j(x)$ be
the ``coordinate-updating'' in Algorithm~\ref{alg:CD-Cyc-Grad}, mapping
from $\vec{x}{\ell}{}$ to $\vec{x}{\ell+1}{}$ with coordinate $j$, i.e.,
\begin{equation} \label{eq:CD-Cyc-Grad-gj}
    \vec{x}{\ell+1}{} = g_{j}(\vec{x}{\ell}{}).
\end{equation}
When $n$ steps of updating are composed together, we denote the composed
mapping as
\begin{equation} \label{eq:CD-Cyc-Grad-g}
    g = g_{n} \circ g_{n-1} \circ \cdots \circ g_{1}.
\end{equation}
The corresponding iteration then is
\begin{equation}
    \vec{x}{(k+1)n}{} = g(\vec{x}{kn}{}), \qquad k = 0, 1, 2, \dots.
\end{equation}

We first establish a few lemmas for the proof of
Theorem~\ref{thm:CD-Cyc-Grad}.

\begin{lemma} \label{lem:CD-Cyc-Grad-LGW}
    Let $R \geq \sqrt{\max_j \norm{A_{:,j}}}$ be a constant, $\gamma
    \leq \frac{1}{4(n+4)R^2}$ be the stepsize, and $W_0 = \left\{ x :
    \norm{x}_\infty < R\right\}$ be the set of initial vectors. For any
    $x^{(0)} \in W_0$, and any coordinate index $j$, the following
    iteration is still in $W_0$, i.e.,
    \begin{equation}
        x^{(1)} = g_j(x^{(0)}) = x^{(0)} - \gamma \left( -4
        \vec{z}{0}{j} + 4 \norm{x^{(0)}}^2 \vec{x}{0}{j}\right)
        e_j \in W_0,
    \end{equation}
    where $g_j$ is defined as \eqref{eq:CD-Cyc-Grad-gj}.
\end{lemma}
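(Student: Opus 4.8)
The plan is to reduce the statement to a one–dimensional estimate on the single updated entry $x^{(1)}_j$, since every other coordinate satisfies $x^{(1)}_{j'} = x^{(0)}_{j'}$ and hence stays in $W_0$ automatically. Writing $a = \vec{x}{0}{j}$, $s = \norm{x^{(0)}}^2$ and $z_j = \vec{z}{0}{j} = (A x^{(0)})_j$, the coordinate gradient step is
\begin{equation*}
    x^{(1)}_j \;=\; a - \gamma\bigl(-4 z_j + 4 s a\bigr) \;=\; a\,(1-4\gamma s) + 4\gamma z_j ,
\end{equation*}
so it suffices to prove $\abs{x^{(1)}_j} < R$.

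First I would collect two elementary facts. By Cauchy--Schwarz and symmetry of $A$, $\abs{z_j} \le \norm{A_{:,j}}\,\norm{x^{(0)}} \le R^2\sqrt{s}$; and since $x^{(0)}\in W_0$ forces $s < n R^2$, the stepsize bound $\gamma \le \frac{1}{4(n+4)R^2}$ gives $0 \le 4\gamma s < \frac{n}{n+4} < 1$, so the factor $1-4\gamma s$ lies in $(0,1)$. Hence $\abs{x^{(1)}_j} \le \abs{a}(1-4\gamma s) + 4\gamma R^2\sqrt{s}$. The structural point that makes this estimate close is that $\abs{a}\le \sqrt{s}$, because $s = \sum_i (\vec{x}{0}{i})^2 \ge a^2$: when $x^{(0)}$ has small norm the entry $a$ is correspondingly small, which is exactly what damps the growth term $4\gamma R^2\sqrt{s}$.

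I would then conclude by a case split on $u := \sqrt{s}$. If $u \ge R$, bound $\abs{a} < R$ to obtain $\abs{x^{(1)}_j} < R(1-4\gamma u^2) + 4\gamma R^2 u = R + 4\gamma R u(R-u) \le R$. If $u < R$, bound $\abs{a}\le u$ to obtain $\abs{x^{(1)}_j} \le G(u) := u(1-4\gamma u^2) + 4\gamma R^2 u$; a one–line computation gives $G'(u) = 1 + 4\gamma R^2 - 12\gamma u^2 > 0$ on $[0,R]$ (this needs only $8\gamma R^2 < 1$, weaker than the hypothesis) together with $G(R)=R$, so $G(u) < R$. The borderline $u = R$ is covered by either estimate using the strict inequality $\abs{a} < R$. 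Since the remaining coordinates are untouched, $x^{(1)} \in W_0$.

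This is essentially a bookkeeping lemma, so I do not anticipate a genuine obstacle; the only thing requiring a little care is keeping the inequalities strict and exploiting $\abs{a}\le\sqrt{s}$, without which the naive triangle-inequality bound overshoots $R$ when $x^{(0)}$ is near the origin. Conceptually the whole content is that the restoring term $-4\gamma s a$ of the step dominates the cross term $4\gamma z_j$ uniformly on $W_0$ once $\gamma$ is small enough.
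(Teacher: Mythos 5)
Your proposal is correct. It does, however, take a different route from the paper's proof. The paper freezes the off-$j$ coordinates and regards the updated entry as a cubic polynomial $h(x_j)$ in the old value $\vec{x}{0}{j}$, shows (using the bounds on $\gamma$ and $R$) that the critical points of $h$ lie outside $(-R,R)$ so that $h$ is monotone on that interval, and then verifies the endpoint inequalities $h(R)<R$ and $h(-R)>-R$ via Cauchy--Schwarz against $\norm{A_{:,j}}\le R^2$. You instead bound the updated entry directly: writing it as $a(1-4\gamma s)+4\gamma z_j$ with $a=\vec{x}{0}{j}$, $s=\norm{x^{(0)}}^2$, you combine the triangle inequality with $\abs{z_j}\le R^2\sqrt{s}$, the key observation $\abs{a}\le\min\bigl(\sqrt{s},R\bigr)$, and a case split on $\sqrt{s}$ versus $R$ together with a one-variable monotonicity check of the resulting bound $G(u)$. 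What your route buys is a slightly more elementary argument (no computation of the cubic's critical points) and, as you note, a marginally weaker requirement on the stepsize: you only need $1-4\gamma s>0$ (guaranteed by $\gamma\le\frac{1}{4nR^2}$) and $8\gamma R^2<1$, rather than the full $\gamma\le\frac{1}{4(n+4)R^2}$. What the paper's route buys is exact dependence on $x_j^{(0)}$ (no triangle inequality in that variable), which is why it never needs your case split; both arguments ultimately rest on the same two ingredients, Cauchy--Schwarz with $\norm{A_{:,j}}\le R^2$ and $\norm{x^{(0)}}^2<nR^2$. Your handling of strictness (using $\abs{a}<R$ when $\sqrt{s}\ge R$ and strict monotonicity of $G$ when $\sqrt{s}<R$) is careful and complete.
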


\begin{proof}
    Since $x^{(0)}$ and $x^{(1)}$ only differ by one entry from each
    other, in order to validate that $x^{(1)} \in W_0$, it is sufficient
    to show that
    \begin{equation} \label{eq:CD-Cyc-Grad-bddL}
        \abs{ \vec{x}{0}{j} - \gamma \left( -4 \vec{z}{0}{j} + 4
        \norm{x^{(0)}}^2 \vec{x}{0}{j} \right) } < R.
    \end{equation}
    In the rest of the  proof, we drop the superscript $(0)$ to simplify
    the notations. We denote the expression in the absolute value in
    \eqref{eq:CD-Cyc-Grad-bddL} as
    \begin{equation}
        \func{h}{x_j} = -4\gamma x_j^3 + \left( 1 + 4\gamma A_{j,j} -
        4 \gamma \sum_{i \neq j}x_i^2 \right) x_j + 4 \gamma \sum_{i
        \neq j} A_{i,j} x_i,
    \end{equation}
    which is a cubic polynomial. Showing
    \eqref{eq:CD-Cyc-Grad-bddL} is equivalent to show that $-R <
    \func{h}{x} < R$ for any $x \in (-R,R)$. The local maximizer and
    minimizer of $h(x)$ are the roots of $h'(x)$,
    \begin{equation}
        x^\pm = \pm \sqrt{ \frac{1}{12\gamma} + \frac{A_{j,j}}{3} -
        \frac{\sum_{i \neq j} x_i^2}{3}},
    \end{equation}
    where we have used the fact that
    $1+4\gamma A_{j,j} - 4\gamma \sum_{i\neq j}x_i^2 \geq 0$, which
    can be verified using the constraints on $R$ and $\gamma$. Again,
    by the constraints, we obtain
    \begin{equation}
        \abs{x^\pm} \geq \sqrt{ R^2 + \frac{R^2+A_{j,j}}{3} +
        \frac{nR^2-\sum_{i \neq j}x_i^2}{3}} > R,
    \end{equation}
    which means the local maximizer and minimizer are beyond the
    interval $(-R,R)$. The leading coefficient of $h(x)$ is $-4\gamma
    < 0$.  Therefore, showing $-R < h(x) < R$ for any $x \in (-R,R)$
    can be implied by $h(R) < R$ and $h(-R) > -R$.
    
    The inequality $h(R) < R$ can be shown as
    \begin{equation}
        \begin{split}
            h(R) & =  R + 4 \gamma \left( \sum_i A_{i,j}x_i - R \sum_{i}
            x_i^2\right) \leq R + 4 \gamma \left( \norm{A_{:,j}}\norm{x}
            - R \norm{x}^2 \right) \\
          & \leq  R + 4 \gamma \norm{x} \left( \norm{A_{:,j}} - R^2
            \right) < R,
        \end{split}
    \end{equation}
    where the first inequality adopts H\"older's inequality and the
    second inequality is due to the fact that $\norm{x}\geq R$.  The
    inequality $h(-R) > -R$ can be shown analogously. Hence we proved
    the lemma.
\end{proof}

\begin{lemma} \label{lem:CD-Cyc-Grad-BddLip}
    For any $x\in W_0$,
    \begin{equation}
        \max_{i,j} \abs{e_i^\top \grad^2 f(x) e_j} < 4(n+3) R^2,
    \end{equation}
    where $W_0$ and $R$ are as defined in
    Lemma~\ref{lem:CD-Cyc-Grad-LGW}, $f$ is the objective function as
    in \eqref{eq:LEVP-opt}.
\end{lemma}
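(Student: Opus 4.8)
The plan is to write $\grad^2 f(x)$ out explicitly and then bound each of its entries using the two hypotheses $\norm{x}_\infty < R$ and $R \geq \sqrt{\max_j \norm{A_{:,j}}}$. From the expression for the Hessian in \eqref{eq:min-lambda} we have $\grad^2 f(x) = -4A + 8xx^\top + 4(x^\top x)I$, so for any indices $i,j$,
\begin{equation*}
    e_i^\top \grad^2 f(x) e_j = -4 A_{i,j} + 8 x_i x_j + 4 (x^\top x)\,\delta_{ij}.
\end{equation*}
First I would record the two elementary bounds coming from the hypotheses: since $|A_{i,j}|^2 \leq \sum_k A_{k,j}^2 = \norm{A_{:,j}}^2 \leq R^4$, we get $|A_{i,j}| \leq R^2$ for all $i,j$; and since $x \in W_0$ means $|x_k| < R$ for every $k$, we get $x_i^2 < R^2$ and $x^\top x = \sum_k x_k^2 < nR^2$.

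Next I would split into the off-diagonal and the diagonal cases. For $i \neq j$ the $\delta_{ij}$ term vanishes and the triangle inequality gives $|e_i^\top \grad^2 f(x) e_j| \leq 4|A_{i,j}| + 8|x_i||x_j| < 4R^2 + 8R^2 = 12R^2 \leq 4(n+3)R^2$. For $i = j$ one must keep the diagonal term; regrouping $8x_i^2 + 4x^\top x = 12 x_i^2 + 4\sum_{k \neq i} x_k^2$ and using the strict bounds on the $x$-entries yields
\begin{equation*}
    |e_i^\top \grad^2 f(x) e_i| \leq 4|A_{i,i}| + 12 x_i^2 + 4\sum_{k\neq i}x_k^2 < 4R^2 + 12R^2 + 4(n-1)R^2 = 4(n+3)R^2.
\end{equation*}
Taking the maximum over $i,j$ then gives the claimed bound.

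There is essentially no hard step here — it is a direct computation — but the points worth being careful about are, first, that it is the diagonal entries rather than the off-diagonal ones that force the factor $n+3$ (the term $4(x^\top x)I$ is the only part of the Hessian whose magnitude grows with the dimension $n$, which is precisely the phenomenon already noted after Theorem~\ref{thm:CD-Cyc-Grad}), and second, that one should use the \emph{open} condition $\norm{x}_\infty < R$ defining $W_0$ to conclude the strict inequality. This per-entry Hessian bound is exactly what is needed afterwards to control the coordinate-wise Lipschitz behavior of $f$ along the iteration, which stays inside $W_0$ by Lemma~\ref{lem:CD-Cyc-Grad-LGW}.
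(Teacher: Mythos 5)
Your proof is correct and follows essentially the same route as the paper: a direct entrywise bound on $\grad^2 f(x) = -4A + 8xx^\top + 4(x^\top x)I$ using $\abs{A_{i,j}} \leq \norm{A_{:,j}} \leq R^2$ and $\norm{x}_\infty < R$, giving $4R^2 + 8R^2 + 4nR^2 = 4(n+3)R^2$. Your separate treatment of the diagonal and off-diagonal entries is in fact slightly more careful than the paper's one-line estimate, which keeps the $4\norm{x}^2$ term in every entry, but the content is the same.
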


\begin{proof}
    For any $x \in W_0$ and any $i,j = 1, 2, \dots, n$, we have,
    \begin{equation}
        \abs{e_i^\top \grad^2 f(x) e_j} = \abs{ -4 A_{i,j} + 8 x_i x_j + 4
        \norm{x}^2 } < 4 \abs{A_{i,j}} + 8 R^2 + 4nR^2 < 4(n+3) R^2.
    \end{equation}
\end{proof}

One direct consequence of Lemma~\ref{lem:CD-Cyc-Grad-BddLip} is that
for any consecutive iterations in either CD-Cyc-Grad, we have the
following relation,
\begin{equation} \label{eq:Lip-ineq}
    \func{f}{ \vec{x}{\ell+1}{} } \leq \func{f}{\vec{x}{\ell}{}}
    +\grad_{j_\ell} \func{f}{\vec{x}{\ell}{}} \bigl(
    \vec{x}{\ell+1}{j_\ell} - \vec{x}{\ell}{j_\ell} \bigr) + \frac{L}{2}
    \abs{\vec{x}{\ell+1}{j_\ell} - \vec{x}{\ell}{j_\ell}}^2
\end{equation}
where $j_\ell$ is the chosen coordinate at the $\ell$-th iteration and $L
= 4(n+3)R^2$ is the Lipschitz constant for the gradient of $f$. To
simplify the presentation below, we will use $L$ instead of the explicit
expression.

\begin{lemma} \label{lem:CD-Cyc-Grad-grad0}
    Let $R \geq \sqrt{\max_j \norm{A_{:,j}}}$ be a constant, $\gamma
    \leq \frac{1}{4(n+4)R^2}$ be the stepsize, and $W_0 = \Set{ x
    | \norm{x}_\infty < R}$ be the set of initial vectors. For
    any $\vec{x}{0}{} \in W_0$, and the iteration follows either
    CD-Cyc-Grad, we have
    \begin{equation}
        \lim_{\ell \rightarrow \infty} \norm{ \grad
        \func{f}{\vec{x}{\ell}{}} } = 0.
    \end{equation}
\end{lemma}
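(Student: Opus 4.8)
The plan is to combine a sufficient-decrease estimate with the cyclic structure of the coordinate selection. First I would substitute the explicit gradient update $\vec{x}{\ell+1}{j_\ell} - \vec{x}{\ell}{j_\ell} = -\gamma\,\grad_{j_\ell}\func{f}{\vec{x}{\ell}{}}$ into the Lipschitz inequality \eqref{eq:Lip-ineq}, which is valid since every iterate stays in $W_0$ by Lemma~\ref{lem:CD-Cyc-Grad-LGW}, to obtain
\begin{equation*}
    \func{f}{\vec{x}{\ell+1}{}} \leq \func{f}{\vec{x}{\ell}{}}
    - \gamma\Bigl(1 - \tfrac{L\gamma}{2}\Bigr)\bigl(\grad_{j_\ell}\func{f}{\vec{x}{\ell}{}}\bigr)^2 .
\end{equation*}
With $L = 4(n+3)R^2$ and $\gamma \leq \frac{1}{4(n+4)R^2}$ one checks $L\gamma \leq \frac{n+3}{n+4} < 1$, so the constant $c := \gamma\bigl(1 - \tfrac{L\gamma}{2}\bigr)$ is strictly positive. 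Since $f \geq 0$ is bounded below, telescoping gives $\sum_{\ell \geq 0} \bigl(\grad_{j_\ell}\func{f}{\vec{x}{\ell}{}}\bigr)^2 \leq \func{f}{\vec{x}{0}{}}/c < \infty$; in particular $\grad_{j_\ell}\func{f}{\vec{x}{\ell}{}} \to 0$, and hence $\norm{\vec{x}{\ell+1}{} - \vec{x}{\ell}{}} = \gamma\,\abs{\grad_{j_\ell}\func{f}{\vec{x}{\ell}{}}} \to 0$ as well.

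The main work is upgrading ``the picked coordinate of the gradient vanishes'' to ``the whole gradient vanishes'', and here the cyclic rule is essential. Because $W_0$ is convex and bounded, $\grad^2 f$ has bounded operator norm on $W_0$ (this follows from Lemma~\ref{lem:CD-Cyc-Grad-BddLip}, or directly from $\grad^2 f(x) = -4A + 8xx^\top + 4\norm{x}^2 I$), so $\grad f$ is Lipschitz on $W_0$ with some finite constant $L_g$. Since the per-step displacement tends to $0$, for each fixed $i \in \{0,\dots,n-1\}$ the bound $\norm{\vec{x}{\ell+i}{} - \vec{x}{\ell}{}} \leq \sum_{m=0}^{i-1}\norm{\vec{x}{\ell+m+1}{} - \vec{x}{\ell+m}{}}$ is a finite sum of terms each going to $0$, so $\max_{0 \leq i < n}\norm{\vec{x}{\ell+i}{} - \vec{x}{\ell}{}} \to 0$ as $\ell \to \infty$.

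Finally, given $\ell$, let $\ell'$ be the largest multiple of $n$ not exceeding $\ell$. By the cyclic rule, each coordinate $j$ is picked exactly once in the window $\{\ell',\dots,\ell'+n-1\}$, say at iteration $\ell' + i_j$ with $i_j \in \{0,\dots,n-1\}$, and I would bound
\begin{equation*}
    \abs{\grad_j \func{f}{\vec{x}{\ell}{}}} \leq L_g\bigl(\norm{\vec{x}{\ell}{} - \vec{x}{\ell'}{}} + \norm{\vec{x}{\ell'}{} - \vec{x}{\ell'+i_j}{}}\bigr) + \abs{\grad_{j_{\ell'+i_j}}\func{f}{\vec{x}{\ell'+i_j}{}}} .
\end{equation*}
Every term on the right tends to $0$ uniformly in $j$ (the first two by the within-a-cycle displacement estimate, the last because it is one of the terms of the convergent series), so taking the maximum over $j$ yields $\norm{\grad \func{f}{\vec{x}{\ell}{}}} \to 0$. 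I expect the only delicate points to be bookkeeping: checking that invariance of $W_0$ genuinely licenses the use of the fixed constants $L$ and $L_g$, and that the within-a-cycle displacement bound is uniform in $\ell$ — both of which are immediate once the summability of the squared picked-coordinate gradients is in hand.
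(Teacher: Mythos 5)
Your proposal is correct and follows essentially the same route as the paper's proof: sufficient decrease from the coordinate-wise Lipschitz bound, telescoping to get square-summability of the picked-coordinate gradients, and then using the cyclic rule plus Lipschitz continuity of $\grad f$ on the invariant set $W_0$ to transfer smallness from the picked coordinates within a cycle to the full gradient. The only differences are cosmetic bookkeeping (you anchor at the start of each cycle and use the full-gradient Lipschitz constant $L_g$, while the paper works with a $\delta_0$--$K_0$ formulation and the mixed coordinate-wise Hessian bound $L$).
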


\begin{proof}
    Substituting the updating expression in
    Algorithm~\ref{alg:CD-Cyc-Grad} into \eqref{eq:Lip-ineq}, we obtain,
    \begin{equation}
        \func{f}{ \vec{x}{\ell+1}{} } \leq \func{f}{\vec{x}{\ell}{}}
        - \gamma \bigl( \grad_{j_\ell} \func{f}{\vec{x}{\ell}{}}
        \bigr) ^2 + \frac{L\gamma^2}{2} \bigl( \grad_{j_\ell}
        \func{f}{\vec{x}{\ell}{}} \bigr)^2.
    \end{equation}
    Since $1 - \frac{\gamma L}{2} > 0$, we have for any $\ell$
    \begin{equation}
        \bigl( \grad_{j_\ell} \func{f}{\vec{x}{\ell}{}} \bigr)^2 \leq
        \frac{1}{\gamma \bigl( 1 - \frac{\gamma L}{2} \bigr)}
        \bigl( \func{f}{\vec{x}{\ell}{}} - \func{f}{\vec{x}{\ell+1}{}}
        \bigr).
    \end{equation}
    Summing over all $\ell$ from 0 to $T = nK$ for any big integer $K$,
    we have
    \begin{equation}
        \begin{split}
            \sum_{k=0}^{K} \sum_{\ell = nk }^{n(k+1)-1}\bigl(
            \grad_{j_\ell} \func{f}{\vec{x}{\ell}{}} \bigr)^2 \leq &
            \frac{1}{\gamma \bigl( 1 - \frac{\gamma L}{2} \bigr)}
            \bigl( \func{f}{\vec{x}{0}{}} - \func{f}{\vec{x}{T}{}}
            \bigr) \\
            \leq & \frac{1}{\gamma \bigl( 1 - \frac{\gamma L}{2} \bigr)}
            \bigl( \func{f}{\vec{x}{0}{}} - f^*
            \bigr).
        \end{split}
    \end{equation}
    where $f^*$ denotes the minimum of the objective function. Hence
    this means $\lim_{\ell \rightarrow \infty} \grad_{j_\ell} \func{f}{
    \vec{x}{\ell}{} } = 0$. The limit is equivalent to say that for
    any $\delta_0$, there exists a constant $K_0$ such that for any $k
    \geq K_0$, we have,
    \begin{equation} \label{eq:bound-gradf}
        \bigl\lvert \grad_{j_\ell} \func{f}{\vec{x}{\ell}{}}
        \bigr\rvert \leq \delta_0, \qquad \ell = kn,\dots,(k+1)n-1.
    \end{equation}
    Let $\ell_1$ and $\ell_2$ be two iterations within $kn$ and
    $(k+1)n-1$, we first conduct a loose Lipschitz bound on the
    coordinate-wise gradient, without loss of generality, we assume
    $\ell_1 \leq \ell_2$,
    \begin{equation}
        \begin{split}
            \abs{\grad_{j_{\ell_1}} \func{f}{ \vec{x}{\ell_1}{} } -
            \grad_{j_{\ell_1}} \func{f}{ \vec{x}{\ell_2}{} } } \leq &
            \sum_{\ell = \ell_1}^{\ell_2-1} \abs{\grad_{j_{\ell_1}}
            \func{f}{ \vec{x}{\ell}{} } - \grad_{j_{\ell_1}} \func{f}{
            \vec{x}{\ell + 1}{} } } \\
            \leq & L \sum_{\ell = \ell_1}^{\ell_2-1} \abs{
            \vec{x}{\ell}{} - \vec{x}{\ell + 1}{} } \\
            \leq & L \sum_{\ell = \ell_1}^{\ell_2-1} \abs{
            \gamma \grad_{j_\ell} \func{f}{ \vec{x}{\ell}{} } }
            \leq n \delta_0,
        \end{split}
    \end{equation}
    where the last inequality is due to \eqref{eq:bound-gradf}
    and $\gamma L \leq 1$.  Let $\ell_0$ be an iteration
    within $kn$ and $(k+1)n-1$. And another important point in
    Algorithm~\ref{alg:CD-Cyc-Grad} is that $\{j_\ell\}_{\ell =
    nk}^{(k+1)n-1}$ is $\{1,2,\dots,n\}$. Next, we would show
    that the square norm of the full gradient also converges to zero,
    \begin{equation}
        \begin{split}
            \norm{\grad \func{f}{\vec{x}{\ell_0}{}}}^2 = &
            \sum_{j = 1}^n \bigl( \grad_j \func{f}{\vec{x}{\ell_0}{}}
            \bigr)^2 \\
            = & \sum_{\ell = nk}^{(k+1)n-1} \bigl( \grad_{j_\ell}
            \func{f}{\vec{x}{\ell_0}{}} - \grad_{j_\ell}
            \func{f}{\vec{x}{\ell}{}} + \grad_{j_\ell}
            \func{f}{\vec{x}{\ell}{}} \bigr)^2 \\
            \leq & \sum_{\ell = nk}^{(k+1)n-1} \Bigl( \bigl(
            \grad_{j_\ell} \func{f}{\vec{x}{\ell}{}} \bigr)^2 + 2
            \delta_0 \abs{ \grad_{j_\ell} \func{f}{\vec{x}{\ell_0}{}}
            - \grad_{j_\ell} \func{f}{\vec{x}{\ell}{}} } \\
            & + \abs{ \grad_{j_\ell} \func{f}{\vec{x}{\ell_0}{}} -
            \grad_{j_\ell} \func{f}{\vec{x}{\ell}{}} }^2 \Bigr)\\
            \leq & ( n + 2n^2 + n^3) \delta_0^2.
        \end{split}
    \end{equation}
    Since $\delta_0$ can be arbitary small, therefore, we
    conclude that $\lim_{\ell \rightarrow \infty} \norm{\grad
    \func{f}{\vec{x}{\ell}{}} } = 0$.

    Here, the initial vector of the method must be chosen in the
    $W_0$ to guarantee the bounded Lipschitz condition on the gradient
    of $f$ and iteration stays within $W_0$.
\end{proof}

\begin{proof} [Proof of Theorem~\ref{thm:CD-Cyc-Grad}]
    
    Since $R \geq \sqrt{\max_j \norm{A_{:,j}}}$ and $\gamma
    \leq \frac{1}{4(n+4)R^2}$, Lemma~\ref{lem:CD-Cyc-Grad-LGW}
    states that for any $x \in W_0$ and $j$, we have $g_j(x) \in
    W_0$. Hence we have, for any $x \in W_0$, $g(x) \in W_0$, where
    $g$ is defined as \eqref{eq:CD-Cyc-Grad-g}. Further, as stated
    by Lemma~\ref{lem:CD-Cyc-Grad-BddLip}, $f$ has bounded Lipschitz
    coordinate gradient in $W_0$ and the stepsize $\gamma$ obeys $\gamma
    \leq \frac{1}{4(n+4)R^2} < \frac{1}{4(n+3)R^2}$. Proposition 4
    in \cite{Lee2017a} shows that under these conditions, $\det(D
    g(x)) \neq 0$. Corollary 1 in \cite{Lee2017a} shows that $\mu
    \bigl( \Set{\vec{x}{0}{} | \lim_{k \rightarrow \infty}
    g^k(\vec{x}{0}{}) \in \chi^s} \bigr) = 0$. Combining with the
    conclusion of Lemma~\ref{lem:CD-Cyc-Grad-grad0}, we obtain the
    conclusion of Theorem~\ref{thm:CD-Cyc-Grad}.

\end{proof}

\section{Proof of local convergence} \label{app:local-conv}

\begin{proof}[Proof of Lemma~\ref{lem:mono-decay}]
    For any $\vec{x}{\ell}{} \in D^+ \subset B^+$ and fixed index
    $j$, $\func{f}{\vec{x}{\ell}{}}$ is a quartic monic polynomial
    of $\vec{x}{\ell}{j}$, denoted as $p(\vec{x}{\ell}{j})$.  Since
    $\func{f}{\vec{x}{\ell}{}} = p(\vec{x}{\ell}{j})$, $\calI = \{ y:
    p(y) \leq \func{f}{\vec{x}{\ell}{}}\}$ is a non-empty set.
    
    When $p'(\vec{x}{\ell}{j}) = \grad_j \func{f}{\vec{x}{\ell}{}}
    = 0$, we obtain,
    \begin{equation}
        \func{f}{\vec{x}{\ell+1}{}} \leq \func{f}{\vec{x}{\ell}{}} -
        \frac{1}{2L} \left(\grad_j \func{f}{\vec{x}{\ell}{}}\right)^2.
    \end{equation}
    
    When $p'(\vec{x}{\ell}{j}) = \grad_j \func{f}{\vec{x}{\ell}{}} < 0$,
    there exists an interval $[a,b]$ with $a < b$ such that $p(a) = p(b)
    = \func{f}{\vec{x}{\ell}{}}$ and $p(y) < \func{f}{\vec{x}{\ell}{}}$
    for all $y \in (a,b)$. It can be further confirmed that $a =
    \vec{x}{\ell}{j}$. By mean value theorem, there exists at least
    one number $c \in (a,b)$ such that $p'(c) = 0$. If there are two,
    let $c$ denote the smaller one so that $p'(y) < 0$ for $y \in
    (a,c)$. Applying the mean value theorem one more time, we have,
    \begin{equation}
        0 < - \grad_j \func{f}{\vec{x}{\ell}{}} = - p'(a) = p'(c) -
        p'(a) = (c-a) p''(\xi) \leq (c-a)L,
    \end{equation}
    which means
    \begin{equation} \label{eq:inLipDomain}
        a - \frac{\grad_j \func{f}{\vec{x}{\ell}{}}}{L} =
        \vec{x}{\ell}{j} - \frac{\grad_j \func{f}{\vec{x}{\ell}{}}}{L}
        \in (a,c).
    \end{equation}
    Equation~\eqref{eq:inLipDomain} implies $\vec{x}{\ell}{} -
    \frac{1}{L}\grad_j \func{f}{\vec{x}{\ell}{}}e_j \in D^+$. Following
    the Lipschitz condition of $\grad_j f$ in $D^+$, we obtain,
    \begin{equation} \label{eq:fdecay}
        \begin{split}
            \func{f}{\vec{x}{\ell+1}{}} \leq & \func{f}{ \vec{x}{\ell}{}
            - \frac{1}{L}\grad_{j} \func{f}{ \vec{x}{\ell}{} } e_{j} } \\
            \leq & \func{f}{\vec{x}{\ell}{} } - \frac{1}{L} \left(
            \grad_{j} \func{f}{ \vec{x}{\ell}{} } \right)^2 +
            \frac{L}{2}\left( \frac{1}{L} \grad_{j} \func{f}{
            \vec{x}{\ell}{} } \right)^2 \\
            = & \func{f}{\vec{x}{\ell}{}} - \frac{1}{2L} \left( \grad_{j}
            \func{f}{\vec{x}{\ell}{}} \right)^2.
        \end{split}
    \end{equation}

    Similarly, equation~\eqref{eq:fdecay} can be obtain when
    $p'(\vec{x}{\ell}{j}) = \grad_j \func{f}{\vec{x}{\ell}{}} > 0$ as
    the case of $p'(\vec{x}{\ell}{j}) = \grad_j \func{f}{\vec{x}{\ell}{}}
    < 0$.
    
    Analogically, we can show the same conditions with the same constants
    hold for $f$ in $D^-$.

    Once we have $\func{f}{\vec{x}{\ell+1}{}} \leq
    \func{f}{\vec{x}{\ell}{}} - \frac{1}{2L}\bigl(
    \grad_j \func{f}{\vec{x}{\ell}{}}\bigr)^2$, it is
    straightforward to show that $\vec{x}{\ell+1}{} \in D^+
    \cup D^-$. If $\vec{x}{\ell+1}{} \not\in D^+ \cup D^-$ then
    there exist more than two minimizers of $f(x)$ which violates
    Theorem~\ref{thm:global-minimizer}.
    Hence, $\vec{x}{\ell+1}{} \in D^+ \cup D^-$.
\end{proof}

\begin{proof}[Proof of Lemma~\ref{lem:conv}]
    By Lemma~\ref{lem:mono-decay}, we have,
    \begin{equation} \label{eq:local-conv1}
        \func{f}{ \vec{x}{\ell+1}{} } \leq \func{f}{ \vec{x}{\ell}{} }
        - \frac{1}{2L} \left( \grad_{j_{\ell}} \func{f}{ \vec{x}{\ell}{}
        } \right)^2,
    \end{equation}
    where $j_\ell$ denotes the index picked at $\ell$-th iteration.
    Subtracting $f^*$ from both side of \eqref{eq:local-conv1} and
    taking the conditional expectation, we obtain,
    \begin{equation} \label{eq:local-conv2}
        \begin{split}
            \expect{ \func{f}{ \vec{x}{\ell+1}{} } \mid \vec{x}{\ell}{}
            } - f^* \leq & \func{f}{ \vec{x}{\ell}{} } - f^*
            - \frac{1}{2L} \expect{ \bigl( \grad_{j_{\ell}} \func{f}{
            \vec{x}{\ell}{} } \bigr)^2 \mid \vec{x}{\ell}{} } \\
            = & \func{f}{ \vec{x}{\ell}{} } - f^* -
            \frac{1}{2L} \sum_{j=1}^n \frac{\abs{\grad_j \func{f}{
            \vec{x}{\ell}{} } }^{t+2}}{ \norm{\grad \func{f}{
            \vec{x}{\ell}{} } }_t^t} \\
            = & \func{f}{ \vec{x}{\ell}{} } - f^* -
            \frac{1}{2L} \frac{ \norm{\grad \func{f}{ \vec{x}{\ell}{}
            } }_{t+2}^{t}}{ \norm{\grad \func{f}{ \vec{x}{\ell}{} }
            }_t^t} \norm{\grad \func{f}{ \vec{x}{\ell}{} } }_{t+2}^{2} \\
            \leq & \func{f}{ \vec{x}{\ell}{} } - f^* -
            \frac{1}{2L n^{\frac{2}{t+2}}} \norm{\grad \func{f}{
            \vec{x}{\ell}{} } }_{t+2}^{2},
        \end{split}
    \end{equation}
    where the first equality uses the probability $p_j$ of the sampling
    procedure and the last equality is due to elementary vector norm
    inequality.

    Next, we will bound the last term in \eqref{eq:local-conv2} with
    the \strongcvx{\frac{t+2}{t+1}} property of $f(x)$. Minimizing
    both side of \eqref{eq:strongly-convex} with respect to $y$
    and substituting $x = \vec{x}{\ell}{}$, we have,
    \begin{equation} \label{eq:local-conv3}
        \begin{split}
            f^* \geq & \func{f}{ \vec{x}{\ell}{} } -
            \sup_y \left\{ -\grad \func{f}{ \vec{x}{\ell}{} }^\top
            \left( y - \vec{x}{\ell}{} \right) - \frac{\mu_q}{2}
            \norm{y-\vec{x}{\ell}{}}_{\frac{t+2}{t+1}}^2 \right\} \\
            \geq & \func{f}{ \vec{x}{\ell}{} } - \frac{1}{2\mu_q} \norm{
            \grad \func{f}{ \vec{x}{\ell}{} } }_{t+2}^2, \\
        \end{split}
    \end{equation}
    where $q = \frac{t+2}{t+1}$ and the last supreme
    term in the first line is a conjugate of function
    $\frac{\mu_q}{2}\norm{\cdot}_{\frac{t+2}{t+1}}^2$ and the
    result is given by Example 3.27 in~\cite{Boyd2004}, which is
    $\frac{1}{2\mu_q}\norm{\cdot}_{t+2}^2$. Here $\norm{\cdot}_{t+2}$
    is the dual norm of $\norm{\cdot}_{\frac{t+2}{t+1}}$.

    Substituting \eqref{eq:local-conv3} into \eqref{eq:local-conv2}, we
    obtain,
    \begin{equation}
        \expect{ \func{f}{ \vec{x}{\ell+1}{} } \mid \vec{x}{\ell}{} }
        - f^* \leq \bigl( 1 - \frac{\mu_q}{L n^{2 - 2\frac{t+1}{t+2}}}
        \bigr) \bigl( \func{f}{ \vec{x}{\ell}{} } - f^* \bigr)
         = \bigl( 1 - \frac{\mu_q}{L n^{2-\frac{2}{q}}} \bigr)
        \bigl( \func{f}{ \vec{x}{\ell}{} } - f^* \bigr).
    \end{equation}
\end{proof}

\begin{proof}[Proof of Theorem~\ref{thm:scd-conv}]
    By Lemma~\ref{lem:mono-decay}, we know that if $\vec{x}{0}{}
    \in D^+ \cup D^-$, then $\vec{x}{\ell}{} \in D^+ \cup
    D^-$ for all $\ell$. Hence Lemma~\ref{lem:conv} holds for all
    $\vec{x}{\ell}{} \in D^+ \cup D^-$. We take expectation
    condition on the sigma algebra of $\vec{x}{0}{}$,
    \begin{equation}
        \begin{split}
            \expect{ \func{f}{\vec{x}{\ell}{}} \mid \vec{x}{0}{} } -
            f^* \leq & \bigl( 1 - \frac{\mu_q}{L n^{2-\frac{2}{q}}}
            \bigr) \left( \expect{ \func{f}{\vec{x}{\ell-1}{}} \mid
            \vec{x}{0}{} } - f^* \right) \\
            \leq & \cdots \leq \bigl( 1 - \frac{\mu_q}{L
            n^{2-\frac{2}{q}}} \bigr)^\ell \left(
            \func{f}{\vec{x}{0}{}} - f^* \right),
        \end{split}
    \end{equation}
    where we adopt property of conditional expectation.

    Assume $\vec{x}{\ell}{} \in D^+$ and $x^* = \sqrt{\lambda_1}v_1 \in
    D^+$. Due to the strongly convexity of $f(x)$, we have
    \begin{equation}
        \begin{split}
            \expect{ \dist{ \vec{x}{\ell}{} }{ X^* }^2 \mid
            \vec{x}{0}{} } \leq & \expect{ \norm{ \vec{x}{\ell}{}
            - x^* }^2 \mid \vec{x}{0}{} } \\
            \leq & \frac{2}{\mu_2} \expect{  \func{f}{\vec{x}{\ell}{}}
            - f(x^*) \mid \vec{x}{0}{} } \\
            \leq & \frac{2}{\mu_2} \bigl(
            1-\frac{\mu_q}{Ln^{2-\frac{2}{q}}} \bigr)^\ell \left(
            \func{f}{\vec{x}{0}{}} - f^* \right).
        \end{split}
    \end{equation}

    If $\vec{x}{\ell}{} \in D^-$, then we choose $x^* = -\sqrt{\lambda_1}
    v_1 \in D^-$ and the conclusion follows as above.
\end{proof}

\end{document}